\numberwithin{equation}{section}
\definecolor{darkred}{rgb}{1,0,0} 
\definecolor{darkgreen}{rgb}{0,0.8,0}
\definecolor{darkblue}{rgb}{0,0,1}
\providecommand\@dotsep{5}
\definecolor{orange}{RGB}{253,85,0}
\definecolor{darkgreen}{RGB}{0,95,10}
 \newcommand{\inj}{\mathrm{inj}}
 \newcommand{\Z}{\mathds{Z}}
 \newcommand{\R}{\mathds{R}}
 \newcommand{\U}{\mathds{U}}
 \newcommand{\Wu}{W^{u}}
 \newcommand{\Ws}{W^{s}}
 \newcommand{\Es}{E^{s}}
 \newcommand{\FF}{\mathcal{F}}
 \newcommand{\UU}{\mathcal{U}}
 \newcommand{\WW}{\mathcal{W}}
 \newcommand{\KK}{\mathcal{K}}
 \newcommand{\GG}{\mathcal{G}}
 \newcommand{\CC}{\mathcal{C}}
 \newcommand{\Diff}{\mathrm{Diff}}
 \newcommand{\area}{\mathrm{area}}
 \newcommand{\id}{\mathrm{id}}
 \newcommand{\Emb}{\mathrm{Emb}}
 \newcommand{\trap}{\mathrm{trap}}
 \DeclareMathOperator{\interior}{int}
 \theoremstyle{plain}
 \newtheorem{MainThm}{Theorem}
 \newtheorem{MainCor}[MainThm]{Corollary}
 \newtheorem{Thm}{Theorem}[section]
 \newtheorem{Lemma}[Thm]{Lemma}
 \newtheorem{Cor}[Thm]{Corollary}
 \theoremstyle{definition}
 \newtheorem{Assumption}[Thm]{Assumption}
 \newtheorem{Definition}[Thm]{Definition}
 \newtheorem{Remark}[Thm]{Remark}
 \newtheorem{Example}[Thm]{Example}
\title[Surfaces of section for geodesic flows of closed surfaces]{Surfaces of section for geodesic flows\\ of closed surfaces} 
\author[G. Contreras]{Gonzalo Contreras}
\address{Gonzalo Contreras\newline\indent 
Centro de Investigaci\'on en Matem\'aticas\newline\indent 
A.P. 402, 36.000, Guanajuato, GTO, Mexico}
\email{gonzalo@cimat.mx}
\author[G. Knieper]{Gerhard Knieper}
\address{Gerhard Knieper\newline\indent Ruhr Universit\"at Bochum, Fakult\"at f\"ur Mathematik\newline\indent Geb\"aude IB 3/183, D-44780 Bochum, Germany}
\email{gerhard.knieper@rub.de}
\author[M. Mazzucchelli]{\\ Marco Mazzucchelli}
\address{Marco Mazzucchelli\newline\indent CNRS, UMPA, \'Ecole Normale Sup\'erieure de Lyon\newline\indent 46 all\'ee d'Italie, 69364 Lyon, France}
\email{marco.mazzucchelli@ens-lyon.fr}
\author[B.H. Schulz]{Benjamin H. Schulz}
\address{Benjamin H. Schulz\newline\indent Ruhr Universit\"at Bochum, Fakult\"at f\"ur Mathematik\newline\indent Geb\"aude IB 3/53, D-44780 Bochum, Germany}
\email{benjamin.schulz-c95@rub.de}
\thanks{Gonzalo Contreras is partially supported by CONACYT, Mexico, grant A1-S-10145. Gerhard Knieper, Marco Mazzucchelli, and Benjamin H. Schulz are partially supported by the SFB/TRR 191 ``Symplectic Structures in Geometry, Algebra and Dynamics'', funded by the Deutsche Forschungsgemeinschaft. Marco Mazzucchelli is also partially supported by the ANR grants CoSyDy (ANRCE40-0014) and COSY
(ANR-21-CE40-0002), and by the IEA project IEA00549 from CNRS}
\date{April 25, 2022}
\keywords{Surfaces of section, Birkhoff sections, geodesic flows, simple closed geodesics, waists, curve shortening flow}
\subjclass[2010]{53C22, 37D40, 53D25}
\begin{document}

\begin{abstract}
We prove several results concerning the existence of surfaces of section for the geodesic flows of closed orientable Riemannian surfaces. The surfaces of section $\Sigma$ that we construct are either Birkhoff sections, meaning that they intersect every sufficiently long orbit segment of the geodesic flow, or at least they have some hyperbolic components in $\partial\Sigma$ as limit sets of the orbits of the geodesic flow that do not return to $\Sigma$. In order to prove these theorems, we provide a study of configurations of simple closed geodesics of closed orientable Riemannian surfaces, which may have independent interest. Our arguments are based on the curve shortening flow. 

\tableofcontents
\end{abstract}

\maketitle

\vspace{-40pt}

\section{Introduction}
\label{s:intro}

Let $N$ be a closed connected 3-manifold, and $X$ a nowhere vanishing vector field on $N$ with flow $\phi_t:N\to N$. A \emph{surface of section} for $X$ is a (not necessarily connected) immersed compact surface $\Sigma\looparrowright N$ whose boundary $\partial\Sigma$ is tangent to $X$, and whose interior $\interior(\Sigma)$ is embedded in $N\setminus\partial\Sigma$ and transverse to $X$. A surface of section allows us to read part of the dynamics of $X$ as discrete dynamics of the first return map to $\Sigma$. For this purpose, we consider the first return time function
\begin{align*}
\tau:\interior(\Sigma)\to(0,+\infty],
\qquad
\tau(z):=\inf\big\{ t>0\  \big|\ \phi_{t}(z)\in\Sigma \big\}.
\end{align*}
Here, as usual, we set $\inf\varnothing=+\infty$. The function $\tau$ is continuous on the whole $\interior(\Sigma)$, and smooth on the open subsets where it is finite. The surface of section $\Sigma$ is called a \emph{Birkhoff section} when there exists a finite constant $\ell>0$ such that, for each $z\in N$, the orbit segment $\phi_{[0,\ell]}(z)$ intersects $\Sigma$. Under this condition, outside $\partial\Sigma$ the dynamical system defined by $X$ is the suspension of the discrete dynamical system defined by the first return map 
\begin{align*}
 \psi:\interior(\Sigma)\to\interior(\Sigma),\qquad \psi(z)=\phi_{\tau(z)}(z).
\end{align*}
Notice that a Birkhoff section $\Sigma$ is not necessarily connected, but any connected component of $\Sigma$ is a Birkhoff section as well.
By means of a Birkhoff section, statements concerning 2-dimensional discrete dynamics can be translated into corresponding statements for the dynamics of vector fields in dimension 3.

The quest for Birkhoff sections has been a problem of major interest in dynamical systems since the seminal work of Poincar\'e \cite{Poincare:1912ua} and Birkhoff \cite{Birkhoff:1917vm}. In the context of Reeb vector fields on closed 3-manifolds, by a spectacular application of holomorphic curves techniques, Hofer--Wysocki--Zehnder \cite{Hofer:1998vy} established the existence of Birkhoff sections for all positively curved $3$-spheres in the $4$-dimensional symplectic vector space, equipped with the canonical contact form. A result of the first and third author \cite{Contreras:2021vx} proved the existence of Birkhoff sections for all closed contact 3-manifolds satisfying the Kupka-Smale condition: non-degeneracy of the closed Reeb orbits, and transversality of the stable and unstable manifolds of the hyperbolic closed Reeb orbits. In particular, the existence of Birkhoff sections holds for the Reeb vector field of a $C^\infty$ generic contact form on any closed 3-manifold, and for the geodesic vector field of a $C^\infty$ generic Riemannian metric on any closed surface.

An independent work of Colin--Dehornoy--Hryniewicz--Rechtman \cite{Colin:2022tq} established the existence of Birkhoff sections for those Reeb vector fields on any closed 3-manifold whose closed orbits are non-degenerate and equidistributed (i.e.~the contact volume form can be approximated, as a measure, by closed  orbits). It is not known whether the equidistribution of the closed orbits holds for the geodesic vector field of a $C^\infty$ generic Riemannian metric on a closed surface; indeed, even the fact that, for such a $C^\infty$ generic Riemannian metric, the lifts of the closed geodesics form a dense subset of the unit tangent bundle is an open problem. Nevertheless, Irie equidistribution theorem \cite{Irie:2021tf} implies that the equidistribution of the closed orbits holds for the Reeb vector field of a $C^\infty$ generic contact form on any closed 3-manifold, and together with \cite{Colin:2022tq} this provides an alternative proof of the $C^\infty$ generic existence of Birkhoff sections for closed contact 3-manifolds. We stress that, on a closed 3-manifold diffeomorphic to the unit tangent bundle of a closed surface, the Reeb flow of a $C^\infty$ generic contact form is not necessarily conjugate (or even orbitally equivalent) to the geodesic flow of a Riemannian metric.

In this paper, we focus on geodesic flows of closed orientable Riemannian surfaces $(M,g)$; hereafter, the Riemannian metrics  are assumed to be $C^\infty$. The unit tangent bundle $SM=\big\{v\in TM\ \big|\ \|v\|_g=1\big\}$ is equipped with the Liouville contact form 
$\lambda_{v}=g(v,d\pi(v)\,\cdot\,)$, where $\pi:SM\to M$ is the base projection. The geodesic vector field $X$ on $SM$ is the associated Reeb vector field, meaning that $\lambda(X)\equiv1$ and $d\lambda(X,\cdot)\equiv0$. The geodesic flow $\phi_t:SM\to SM$ is the associated Reeb flow. Its orbits have the form $\phi_t(\dot\gamma(0))=\dot\gamma(t)$, where $\gamma:\R\looparrowright M$ is a geodesic parametrized with unit speed $\|\dot\gamma\|_g\equiv1$. The closed geodesics of $(M,g)$ are precisely the base projections of the closed orbits of the geodesic flow. A closed geodesic is called simple when it is an embedded circle in $M$.

In this setting, surfaces of section had important dynamical applications, among which we mention the following ones: they played a crucial role in Bangert-Franks-Hingston's proof \cite{Bangert:1993ue,Franks:1992wu,Hingston:1993wb} of the existence of infinitely many closed geodesics on every Riemannian 2-sphere; they were employed by Contreras--Oliveira \cite{Contreras:2004vh} in order to establish the existence of an elliptic closed geodesic on a $C^2$-generic Riemannian 2-sphere, by Contreras--Mazzucchelli \cite{Contreras:2021tg} in order to prove the $C^2$-structural stability conjecture for Riemannian geodesic flows of closed surfaces, and by Knieper-Schulz \cite{Knieper:2022tp} in order to characterize Anosov Riemannian geodesic flows of closed surfaces as the $C^2$-stably transitive ones.

While the above mentioned result in \cite{Contreras:2021vx} implies the existence of a Birkhoff section for ``most'' geodesic flows of closed Riemannian surfaces, it does not provide any information concerning the topology of such a Birkhoff section. In contrast, celebrated work of Birkhoff \cite{Birkhoff:1917vm} provides explicit Birkhoff sections for two classes of geodesic flows: every positively curved Riemannian 2-sphere admits an embedded Birkhoff section diffeomorphic to an annulus, and any negatively curved Riemannian surface admits a Birkhoff section of genus one (see also \cite[Sect.~3]{Fried:1983uj}). This paper is largely inspired by the work of Birkhoff.

Our first main result provides specific Birkhoff sections for geodesic flows of closed surfaces  that do not admit contractible simple closed geodesics without conjugate points. We refer the reader to Section~\ref{ss:conjugate_points} for the background on the classical notion of  conjugate points. A simple closed geodesic $\gamma$ is called a \emph{waist}\footnote{In the literature, sometimes ``waist'' refers to a closed geodesic that is a local length minimizer in the free loop space, but is not necessary a simple closed geodesic.} when it is a local minimizer of the length functional over the free loop space. We recall that a non-degenerate simple closed geodesic is a waist if and only if it does not have conjugate points.

\begin{MainThm}
\label{mt:Birkhoff}
Let $(M,g)$ be a closed connected orientable Riemannian surface of genus $G\geq1$ that does not have any contractible simple closed geodesics without conjugate points. Its  geodesic vector field admits a Birkhoff section $\Sigma\looparrowright SM$, where $\Sigma$ is a compact connected surface of genus one and $8G-4$ boundary components that cover waists.
\end{MainThm}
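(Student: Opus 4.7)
The plan is to assemble $\Sigma$ from Birkhoff annuli over a filling system of waists $\{\gamma_1,\dots,\gamma_N\}$ produced via Grayson's curve shortening flow, and then to verify both the topology and the Birkhoff property by combinatorial bookkeeping. First I would construct the waists by choosing a filling system of simple closed curves on $M$ in prescribed non-trivial free homotopy classes, with an intersection pattern designed so that the third step below yields exactly genus one and $8G-4$ boundary components, and applying Grayson's flow to each curve individually. Grayson's theorem asserts that the flow of a non-contractible simple closed curve on a Riemannian surface converges, up to reparametrization, to a simple closed geodesic that is a local length minimizer in its free homotopy class, hence a waist. The hypothesis that $(M,g)$ has no contractible simple closed geodesic without conjugate points enters here to rule out spurious contractible waists as limits, and to prevent collisions or tangencies among the limit geodesics that would spoil the intersection pattern of the starting system.

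Second, for each waist $\gamma_i$ I pick a side of $\gamma_i$ determined by the orientation of $M$ and form the associated Birkhoff annulus $A_i\subset SM$, namely the closure of the set of unit vectors based on $\gamma_i$ pointing to that side; this is a compact annulus transverse to the geodesic vector field on its interior, whose boundary consists of the two periodic lifts of $\gamma_i$. At every transverse intersection point $p\in\gamma_i\cap\gamma_j$, the annuli $A_i$ and $A_j$ meet along a single arc in the fiber $S_pM$, which I resolve by the standard cut-and-paste surgery compatible with the orientation of $SM$. The outcome is an immersed compact surface $\Sigma\looparrowright SM$ whose interior is embedded in $SM\setminus\partial\Sigma$ and transverse to the geodesic flow, and whose boundary components cover the waists $\gamma_i$.

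Third, an Euler-characteristic count, accounting for the $\chi(A_i)=0$ contributions, the effect of the surgeries at the arc intersections, and the connectedness imposed by the intersection graph of the $\gamma_i$, confirms that for the right choice of starting curves $\Sigma$ is a connected surface of genus one with exactly $8G-4$ boundary components. The Birkhoff property is then a consequence of the filling property: any geodesic avoiding $\bigcup_i\gamma_i$ for long times would be confined to a topological disk component of $M\setminus\bigcup_i\gamma_i$, and a compactness argument combined with the absence of contractible simple closed geodesics without conjugate points (which, by the Morse index theorem, are precisely the contractible waists) rules this out beyond a uniform time $\ell$.

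The hardest step will be the first: Grayson's flow cannot be run simultaneously on several curves without creating collisions, so the argument must process one homotopy class at a time and use the conjugate-points hypothesis to certify that the full resulting system still fills $M$ with the intersection combinatorics required for the Euler-characteristic count. Matching the precise number $8G-4$ of boundary components also demands an explicit topological choice of the starting configuration on $M$, and verifying, after the flow, that the limit waists realize that configuration is where the no-contractible-waist hypothesis plays its most delicate role.
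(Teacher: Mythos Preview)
Your outline has the right architecture—waists from a filling system, Birkhoff annuli, Fried surgery, then a trapping argument in the complementary disk—but step two contains a genuine gap that breaks the Birkhoff property. You take \emph{one} Birkhoff annulus $A_i$ per waist $\gamma_i$: the half of the fiber circle over $\gamma_i$ pointing to a chosen side. That is not enough. Crossing $\gamma_i$ only guarantees hitting $\interior(A_i)$ when the crossing has the correct transverse orientation; the reverse crossing lands in the opposite annulus $A(-\dot\gamma_i)$, which you have discarded. On the flat $2$-torus (which satisfies the hypothesis vacuously) with the two standard closed geodesics $\gamma_1,\gamma_2$, any closed geodesic parallel to $\gamma_1$ and distinct from it crosses $\gamma_2$ exactly once per period, always with the same transverse orientation, and never meets $\gamma_1$; hence either that orbit or its time-reversal never meets $A_1\cup A_2$ at all. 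The paper therefore takes \emph{both} Birkhoff annuli $A(\dot\gamma_i)$ and $A(-\dot\gamma_i)$ for every $i$, and Fried surgery on the resulting four local sheets at each intersection point of the explicit chain $\gamma_1,\dots,\gamma_{2G}$ (consecutive curves meeting once, non-consecutive curves disjoint) is precisely what yields the connected genus-one surface with $8G-4$ boundary components.

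You also misplace where the hypothesis is used. It is not needed to build the waists or to control their mutual intersections: length minimisation in each non-trivial free homotopy class produces the waists, and the bigon criterion (two length minimisers cannot bound a bigon) forces them into minimal position, so the limiting configuration automatically inherits the intersection combinatorics of the initial topological system, with no appeal to conjugate points. The hypothesis enters only at the end, along the lines of your step four: the complement $B=M\setminus\bigcup_i\gamma_i$ is a convex geodesic polygon, and if some orbit were forward-trapped in $SB$, curve shortening in the weakly convex complement of its $\omega$-limit would manufacture a simple closed geodesic in $B$ without conjugate points, contradicting the assumption. That trapping argument—not the waist construction—is where the substantive work lies.
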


Theorem~\ref{mt:Birkhoff} applies in particular to Riemannian surfaces without contractible simple closed geodesics, and we obtain the following corollary.

\begin{MainCor}\label{c:curvature_bound}
The geodesic vector field of any closed connected orientable Riemannian surface $(M,g)$ of genus $G\geq1$ admits a Birkhoff section of genus one and $8G-4$ boundary components that cover waists, provided any of the following two conditions is satisfied:
\begin{itemize}
 \item The Riemannian surface $(M,g)$ has no conjugate points.\vspace{2pt}
 
 \item The Gaussian curvature is bounded from above as
\begin{align*}
 \max (K_g) \leq \frac{2\pi}{\area(M,g)}.
\end{align*}
\end{itemize}
\end{MainCor}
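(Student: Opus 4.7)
The plan is to verify, in each case, that $(M,g)$ admits no contractible simple closed geodesic free of conjugate points, so that the conclusion follows by a direct invocation of Theorem~\ref{mt:Birkhoff}. In fact, in both cases I would establish the stronger statement that no contractible simple closed geodesic exists at all, so that the hypothesis of Theorem~\ref{mt:Birkhoff} is satisfied vacuously.

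Under the first condition, since $(M,g)$ has no conjugate points, neither does its universal Riemannian cover $(\widetilde M,\widetilde g)$. As $G\geq 1$, the underlying surface $\widetilde M$ is diffeomorphic to $\R^2$. By a classical two-dimensional result, on a complete simply connected surface without conjugate points the exponential map at any point is a diffeomorphism and every geodesic minimizes length between any two of its points; in particular $(\widetilde M,\widetilde g)$ carries no closed geodesic. Any contractible closed geodesic on $M$ would lift to a closed geodesic on $\widetilde M$, so no such curve exists.

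Under the second condition, suppose by contradiction that a contractible simple closed geodesic $\gamma$ exists. By the Jordan curve theorem, it bounds an embedded closed disk $D\subset M$, and the Gauss-Bonnet theorem combined with the vanishing of the geodesic curvature along $\gamma$ yields
\[
\int_D K_g\,dA=2\pi\chi(D)=2\pi.
\]
The pointwise bound $K_g\leq 2\pi/\area(M,g)$ then gives
\[
2\pi=\int_D K_g\,dA\leq\frac{2\pi}{\area(M,g)}\,\area(D),
\]
so $\area(D)\geq\area(M,g)$. Since $G\geq 1$, the surface $M$ itself is not a disk, so $M\setminus D$ has nonempty interior and therefore positive area, forcing $\area(D)<\area(M,g)$, a contradiction.

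There is no real obstacle in this argument: the corollary reduces to two short applications of well-known facts, namely the globally length-minimizing character of geodesics in simply connected surfaces without conjugate points, and the Gauss-Bonnet theorem applied to a disk bounded by a geodesic. The only mild subtlety lies in invoking $G\geq 1$ in the second case to prevent the disk $D$ from exhausting $M$.
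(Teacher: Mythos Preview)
Your proof is correct and follows essentially the same route as the paper: in both cases you show that $(M,g)$ has no contractible simple closed geodesic at all, and then invoke Theorem~\ref{mt:Birkhoff}. Your treatment of the no-conjugate-points case spells out the standard lift-to-the-universal-cover argument that the paper leaves implicit, and your Gauss--Bonnet argument for the curvature bound is identical to the paper's (the paper also uses $\area(D)<\area(M,g)$, which as you note relies on $G\geq1$). One minor quibble: invoking ``the Jordan curve theorem'' for a contractible simple closed curve on a higher-genus surface is a slight abuse of terminology, but the conclusion that such a curve bounds a disk is of course correct.
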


\begin{proof}
If $(M,g)$ has no conjugate points, it does not have contractible closed geodesics (see, e.g., \cite[Th.~3.28]{Guillarmou:2024aa}), and Theorem~\ref{mt:Birkhoff} applies. If instead $(M,g)$ contains a contractible simple closed geodesic $\gamma$, we denote by $B\subsetneq M$ the open disk such that $\partial B=\gamma$, and Gauss--Bonnet theorem implies
\[
2\pi= \int_{B} K_g\,dm_{g}\leq \max (K_g) \area(B,g),
\]
where $m_g$ denotes the Riemannian measure.
In particular, $K_g$ attains positive values on $B$. This proves that, under the curvature bound $\max (K_g) \leq 2\pi/\area(M,g)$, there are no contractible simple closed geodesic, and Theorem~\ref{mt:Birkhoff} provides a Birkhoff section as claimed.
\end{proof}

Any oriented simple closed geodesic $\gamma$ on an oriented Riemannian surface $(M,g)$ defines two surfaces of section $A(\dot\gamma),A(-\dot\gamma)\subset SM$ diffeomorphic to annuli, which are referred to as the \emph{Birkhoff annuli} of $\gamma$ (see Section~\ref{ss:Fried}). Theorem~\ref{mt:Birkhoff} was inspired by the following statement for Riemannian 2-spheres due to Bangert \cite[Th.~2]{Bangert:1993ue}, which in turn generalizes the above mentioned result of Birkhoff for positively curved Riemannian 2-spheres. Our methods will provide a slightly simpler proof.

\begin{MainThm}[Bangert]
\label{mt:Bangert}
Let $(S^2,g)$ be a Riemannian 2-sphere, and $\gamma$ a simple closed geodesic with conjugate points whose complement $S^2\setminus\gamma$ does not contain simple closed geodesics without conjugate points. Then both Birkhoff annuli of $\gamma$ are Birkhoff sections. 
\end{MainThm}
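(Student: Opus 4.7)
I would argue by contradiction via Grayson's curve shortening flow; by symmetry it suffices to treat the annulus $A(\dot\gamma)$. Write $S^2\setminus\gamma=D_1\sqcup D_2$ and, by choice of orientation, let $A(\dot\gamma)$ consist of the unit vectors along $\gamma$ pointing into $D_1$. If $A(\dot\gamma)$ were not a Birkhoff section, then for every $n\in\N$ there would exist $v_n\in SM$ with $\phi_{[0,n]}(v_n)$ disjoint from $\interior A(\dot\gamma)$, and a convergent subsequence $v_n\to v$ would yield a vector $v\in SM$ whose entire forward orbit avoids $\interior A(\dot\gamma)$. A transverse crossing of $\gamma$ from $D_2$ into $D_1$ by the geodesic $\eta(t):=\pi\circ\phi_t(v)$ would produce a point of $\interior A(\dot\gamma)$, so $\eta$ makes no such crossing in forward time; hence after its last crossing of $\gamma$ (if any), $\eta$ is confined to one closed disk, say $\eta([T_0,+\infty))\subset\overline{D_2}$. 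Since $\eta\ne\gamma$, uniqueness of geodesics forbids $\eta$ from being tangent to $\gamma$ at any point, so $\eta(t)\in D_2$ for every $t>T_0$.

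The heart of the argument is to use this trapped forward orbit to produce, via Grayson's flow, a simple closed geodesic in $\overline{D_2}$ without conjugate points, contradicting the hypothesis on $\gamma$. Fix $t_0>T_0$ large and set $\alpha:=\eta([T_0+1,t_0])\subset D_2$, a compact geodesic arc. Choose a smooth simple closed curve $c_0\subset D_2$, for example the smoothed boundary of a small tubular neighborhood of $\alpha$ in $D_2$, such that $\alpha$ lies in the inner disk $U_0$ that $c_0$ bounds in $\overline{D_2}$ (the component of $\overline{D_2}\setminus c_0$ not containing $\gamma$). Apply Grayson's curve shortening flow to $c_0$. Since $\gamma$ and $\alpha$ are geodesic, they are stationary solutions of the flow, so by the avoidance principle the evolution $c_t$ is a simple closed curve in $D_2$ disjoint from $\gamma\cup\alpha$, with $\alpha\subset U_t$ for all $t$. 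In particular $U_t$ always contains a curve of fixed positive length $\mathrm{length}(\alpha)$, so $c_t$ cannot contract to a round point. By Grayson's theorem the evolution is therefore defined for all time and converges smoothly, up to reparametrization, to a simple closed geodesic $\gamma'\subset D_2$.

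It remains to show that $\gamma'$ has no conjugate points, which I regard as the main obstacle. The strategy is a minimax refinement of the choice of $c_0$: among smooth simple closed curves in $\overline{D_2}$ whose inner disk contains $\alpha$, select $c_0$ of nearly infimal length; the resulting limit $\gamma'$ is then a local minimizer of length among embedded simple closed curves with $\alpha$ in their inner disk. A standard second-variation-plus-smoothing argument, exploiting that $\gamma'$ is simple and that small embedded perturbations suffice to realize any length-decreasing normal variation, promotes $\gamma'$ to a local minimizer of length on the full free loop space of $M$. In other words $\gamma'$ is a waist, and waists have no conjugate points by the second variation formula. Hence $\gamma'\subset D_2\subset S^2\setminus\gamma$ is a simple closed geodesic without conjugate points, contradicting the hypothesis and completing the proof.
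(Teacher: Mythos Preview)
Your reduction to a forward-trapped geodesic in one hemisphere is essentially right and matches the paper's approach, but the curve-shortening step has a genuine gap. The avoidance principle does \emph{not} prevent $c_t$ from sliding past the endpoints of a geodesic \emph{arc}: in a flat region, a round circle enclosing a short segment shrinks through the segment's endpoints and collapses to a point. So the claim ``$\alpha\subset U_t$ for all $t$'' is unjustified, and $c_t$ may well contract. The paper fixes this by using as obstacle not a finite arc of $\eta$ but the projection $\pi(\omega(v))$ of the $\omega$-limit set; this is a compact $\phi_t$-invariant set, so its projection is a union of complete geodesics with no free endpoints, and each component of $D_2\setminus\pi(\omega(v))$ is weakly convex (Lemma~\ref{l:weakly_locally_convex}), hence preserved by the flow.

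Two further issues. First, you never use the hypothesis that $\gamma$ has conjugate points, yet it is essential: it rules out the degenerate limit $v\in\pm\dot\gamma$ (via Corollary~\ref{c:intersecting}(i)), and it keeps $\pi(\omega(v))$---and hence the limiting geodesic $\gamma'$---bounded away from $\partial D_2=\gamma$ (via Corollary~\ref{c:intersecting}(ii)); without this, $\gamma'$ could simply equal $\gamma$, which yields no contradiction. Second, your ``minimax refinement'' is not a recognizable argument as written. The paper instead \emph{minimizes} length over a non-contractible component of $\Emb(S^1,D_2\setminus\pi(\omega(v)))$ (Lemma~\ref{l:convergence_scg}); the resulting $\gamma'$ realizes the infimum, and Lemma~\ref{l:Bangert} (a simple closed geodesic with conjugate points admits strictly shorter embedded curves on each side) then rules out conjugate points directly. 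This is precisely Theorem~\ref{t:scg1}.
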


\begin{Remark}\label{r:S2}
Every Riemannian 2-sphere has at least two simple closed geodesics with conjugate points. This follows from an addendum to the celebrated theorem of Lusternik and Schnirelmann \cite{Lusternik:1929wa}, see \cite[Th.~1.3(iii) and Prop.~4.2]{De-Philippis:2022wz}).\hfill\qed
\end{Remark}

Our third theorem is an improvement of the above mentioned result \cite{Contreras:2021vx} for the case of geodesic flows. If $K$ is a hyperbolic invariant subset for the geodesic flow, we denote as usual by $\Ws(K)$ and $\Wu(K)$ its stable and unstable manifolds.

\begin{MainThm}
\label{mt:Kupka_Smale}
Let $(M,g)$ be a closed connected orientable Riemannian surface satisfying the following two conditions:
\begin{itemize}

\item[(i)] All contractible simple closed geodesics without conjugate points are non-degenerate.

\item[(ii)] Any pair of not necessarily distinct contractible  waists $\gamma_1,\gamma_2$ $($if it exists$)$ satisfies the transversality condition $\Wu(\dot\gamma_1)\pitchfork\Ws(\dot\gamma_2)$.

\end{itemize}
Then the geodesic vector field of $(M,g)$ admits a Birkhoff section.
\end{MainThm}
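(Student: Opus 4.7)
The plan is to split according to whether $(M,g)$ admits any contractible simple closed geodesic without conjugate points. If none exists and $G\geq 1$, Theorem~\ref{mt:Birkhoff} applies directly. If none exists and $M=S^2$, every simple closed geodesic is contractible, so by assumption no simple closed geodesic of $(S^2,g)$ is free of conjugate points; Remark~\ref{r:S2} supplies a simple closed geodesic $\gamma$ with conjugate points, and Theorem~\ref{mt:Bangert} then exhibits each Birkhoff annulus of $\gamma$ as a Birkhoff section.

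Suppose now that contractible simple closed geodesics without conjugate points do exist. Since vanishing Morse index is equivalent to the absence of conjugate points, every such curve is a waist; call them $\gamma_1,\ldots,\gamma_k$. By hypothesis (i) this list is finite, and being non-degenerate local length minimizers on a surface forces each $\dot\gamma_i$ to be hyperbolic. I would seed the desired Birkhoff section with the union $\Sigma_0=\bigcup_i A(\pm\dot\gamma_i)$ of Birkhoff annuli of these waists, whose boundary is exactly the hyperbolic set $\Lambda=\bigsqcup_i\dot\gamma_i(\R)$. Any orbit that fails to meet $\Sigma_0$ in uniformly bounded time must accumulate on $\Lambda$ in both forward and backward time, and hence lie in $\Wu(\dot\gamma_i)\cap\Ws(\dot\gamma_j)$ for some $i,j$; hypothesis (ii) asserts that all such intersections are transverse.

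To promote $\Sigma_0$ to an honest Birkhoff section, I would cut $M$ along $\gamma_1,\ldots,\gamma_k$ and apply the curve-shortening machinery developed earlier in the paper to the resulting pieces -- in whose interiors, by construction, no further contractible simple closed geodesic without conjugate points survives -- to produce auxiliary surfaces of section $\Sigma_1,\ldots,\Sigma_m$ with boundaries on $\Lambda$, analogous to the ones furnished in the proof of Theorem~\ref{mt:Birkhoff}. The Palis $\lambda$-lemma, fed by the transversality in (ii), then converts the transverse heteroclinic and homoclinic picture between the hyperbolic orbits $\dot\gamma_i$ into the uniform return-time property for $\Sigma_0\cup\Sigma_1\cup\cdots\cup\Sigma_m$. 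The main obstacle I anticipate is exactly this assembly step: arranging the auxiliary pieces so that they glue tangentially onto $\Lambda$, remain embedded and transverse to the geodesic vector field in their interiors, and, combined with $\Sigma_0$, intercept every orbit shadowing $\Lambda$ in bounded time -- which is precisely the point where hypothesis (ii) must be invoked in an essential way.
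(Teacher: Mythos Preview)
Your case split when no contractible simple closed geodesics without conjugate points exist is correct and matches the paper. The remaining case, however, has several genuine gaps.

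First, the claim that the contractible waists form a finite list $\gamma_1,\ldots,\gamma_k$ ``by hypothesis~(i)'' is unjustified. Non-degeneracy only guarantees that each such waist is isolated; it does not preclude infinitely many of them with lengths tending to infinity. The paper never attempts to list all contractible waists. Instead it fixes a seed collection $\GG'$ (the $2G$ waists of Lemma~\ref{l:simple_closed_geodesics} in positive genus, or a single simple closed geodesic with conjugate points on $S^2$) whose complement is an open disk satisfying Assumption~\ref{a:convexity}, and then applies Theorem~\ref{t:scg2} to that disk to extract a specific \emph{finite} family $\GG''$ of pairwise disjoint contractible simple closed geodesics. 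Finiteness here comes from Lemma~\ref{l:V_finite}, which uses both the Gauss--Bonnet area bound and the non-degeneracy hypothesis, and only works for pairwise disjoint families inside a fixed disk.

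Second, your assertion that any orbit failing to meet $\Sigma_0$ in bounded time must accumulate on $\Lambda$ is exactly the hard point, and it does not follow from having only the Birkhoff annuli of the waists. Orbits could wander indefinitely without approaching a waist. The paper secures this property by the notion of \emph{complete system of simple closed geodesics} (Definition~\ref{d:complete_system}): the collection $\GG=\GG'\cup\GG''$ includes, crucially, the simple closed geodesics \emph{with} conjugate points produced by Theorem~\ref{t:scg2}, and it is property~(ii) of that definition---established via the trapped-set analysis in the proof of Theorem~\ref{t:scg2}---that forces every non-returning orbit into $\Ws(K)\cup\Wu(K)$.

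Third, the way transversality is used differs substantially from your sketch. You propose to glue auxiliary surfaces of section along $\Lambda$ and invoke the $\lambda$-lemma to obtain uniform return. The paper instead uses transversality together with the shadowing lemma (Lemmas~\ref{l:homoclinics} and~\ref{l:reduction}) to manufacture, for some $\gamma\in\KK$, a new \emph{closed geodesic} $\zeta$ that transversely intersects $\gamma$; adding $\zeta$ to the collection removes $\gamma$ from the limit subcollection. Iterating yields a complete system with $\KK=\varnothing$, after which Lemma~\ref{l:empty_limit_subcollection} and Fried surgery finish the job. Your proposed assembly step, by contrast, would require controlling how pieces cut along possibly intersecting waists fit together and satisfy Assumption~\ref{a:convexity}, none of which is addressed.
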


The Birkhoff section provided by Theorem~\ref{mt:Kupka_Smale} is constructed by applying a surgery procedure due to Fried (see Section~\ref{ss:Fried}) to the Birkhoff annuli of a suitable finite collection of closed geodesics. The topology of such a Birkhoff section depends on the configuration of the closed geodesics in the collection, which in turn depends on the geometry of the Riemannian surface.

When an orientable Riemannian surface does not satisfy the assumptions of Theorems~\ref{mt:Birkhoff}, \ref{mt:Bangert}, and \ref{mt:Kupka_Smale}, but still satisfies a mild non-degeneracy condition, we are at least able to construct a surface of section $\Sigma$ whose escaping orbits are asymptotic to certain hyperbolic components of $\partial\Sigma$. 

\begin{MainThm}
\label{mt:broken_book}
Let $(M,g)$ be a closed connected orientable Riemannian surface  all of whose contractible simple closed geodesics without conjugate points are non-degenerate. Its geodesic vector field admits a surface of section $\Sigma\looparrowright SM$ satisfying the following properties:
\begin{itemize}

\item[(i)] \textnormal{\textbf{Topology:}} If $M=S^2$, $\Sigma$ is the disjoint union of the Birkhoff annuli of some simple closed geodesics. If $M$ has genus $G\geq 1$, $\Sigma$ is the disjoint union of the Birkhoff annuli of some simple closed geodesics and of a compact connected surface of genus one and $8G-4$ boundary components, all covering non-contractible waists.\vspace{2pt}

\item[(ii)] \textnormal{\textbf{Completeness:}} $\Sigma$ intersects any orbit $\phi_{(-\infty,\infty)}(z)$ of the geodesic flow.\vspace{2pt}

\item[(iii)] \textnormal{\textbf{Escape set:}} There exists a possibly empty union of connected components $K\subset\partial\Sigma$, whose base projection $\pi(K)$ is the union of hyperbolic contractible waists, such that the complements $\Ws(K)\setminus K$ and $\Wu(K)\setminus K$ are given by
\begin{align*}
 \Ws(K)\setminus K &= \big\{ z\in SM\ \big|\  \phi_{[t,\infty)}(z)\cap\Sigma=\varnothing\mbox{ for some }t\in\R\big\},\\
  \Wu(K)\setminus K &= \big\{ z\in SM\ \big|\  \phi_{(-\infty,t]}(z)\cap\Sigma=\varnothing\mbox{ for some }t\in\R\big\}.
\end{align*}

\item[(iv)] \textnormal{\textbf{Return time:}} There exists $\ell>0$ such that, for each $z\in SM$ sufficiently close to $\partial\Sigma\setminus K$, we have $\phi_{(0,\ell]}(z)\cap\Sigma\neq\varnothing$.

\end{itemize}
\end{MainThm}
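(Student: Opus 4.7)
The plan is to build $\Sigma$ as the disjoint union of a ``backbone'' piece coming from the construction behind Theorem~\ref{mt:Birkhoff} (present only when $G\geq1$) together with the Birkhoff annuli of all contractible simple closed geodesics without conjugate points, and then to single out the hyperbolic ones among these as the source of the escape set $K$. First I would show, using hypothesis~(i) on non-degeneracy together with Gauss--Bonnet applied to the disk bounded by a contractible simple closed geodesic without conjugate points and a length bound coming from Grayson's flow, that the collection of contractible waists $\gamma_{1},\dots,\gamma_{N}$ is finite and that each $\gamma_{i}$ is either hyperbolic or elliptic.

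Next, for $G\geq1$ I would revisit the construction underlying Theorem~\ref{mt:Birkhoff} and extract from it an immersed connected genus-one surface $\Sigma_{0}\looparrowright SM$ with $8G-4$ boundary circles covering non-contractible waists: indeed, the hypothesis excluding contractible waists in Theorem~\ref{mt:Birkhoff} enters only at the very end, to certify that $\Sigma_{0}$ is already a Birkhoff section, not to produce it as an immersed surface of section. Retaining only the surface-of-section output, I would then set
\[
\Sigma := \Sigma_{0} \,\sqcup\, \bigsqcup_{i=1}^{N}\bigl(A(\dot\gamma_{i})\sqcup A(-\dot\gamma_{i})\bigr),
\]
with the convention $\Sigma_{0}=\varnothing$ when $M=S^{2}$, and I would define $K\subset\partial\Sigma$ to be the union of those boundary circles covering a hyperbolic $\gamma_{i}$. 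This delivers property~(i) directly.

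For (ii), I would argue by contradiction: if some $z\in SM$ satisfied $\phi_{\R}(z)\cap\Sigma=\varnothing$, then Grayson's flow applied to a short geodesic arc transverse to $\phi_{t}(z)$ would, as in the proofs of Theorems~\ref{mt:Birkhoff} and~\ref{mt:Bangert}, either shrink the arc to a point, producing a contractible waist as the evanescent limit, or evolve it to a non-contractible simple closed geodesic; either outcome forces the orbit through $z$ to cross some Birkhoff annulus built into $\Sigma$. For (iii) and (iv), a forward orbit that fails to return to $\Sigma$ must accumulate on $\partial\Sigma$; the $\lambda$-lemma combined with the hyperbolic stable/unstable structure at each hyperbolic $\gamma_{i}$ and the twist dynamics around each elliptic one forces such a limit to lie along $\Ws$ of a hyperbolic $\gamma_{i}$, yielding the stated description of $\Ws(K)\setminus K$; reversing time handles $\Wu(K)\setminus K$; and the uniform first-return bound near elliptic waists gives~(iv).

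The main obstacle I anticipate is property~(iii): ruling out orbits that neither cross $\Sigma$ nor asymptotically approach a single hyperbolic contractible waist. This requires combining the curve-shortening analysis with a careful understanding of how the various Birkhoff annuli and the backbone $\Sigma_{0}$ fit together inside $SM$, in order to guarantee that every orbit outside $\Ws(K)\cup\Wu(K)$ must pierce some component of $\Sigma$ in bounded forward and backward time, and that no ``wandering'' non-returning behavior between distinct hyperbolic waists is possible.
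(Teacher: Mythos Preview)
Your construction has a real gap: by taking only the Birkhoff annuli of contractible \emph{waists}, you omit the contractible simple closed geodesics \emph{with} conjugate points, and these are essential. The paper's Theorem~\ref{t:scg2} shows that inside the convex polygon $B=M\setminus\bigcup\GG'$ one finds a maximal nested sequence $\gamma_1,\dots,\gamma_{2k}$ that \emph{alternates} between waists (odd~$i$) and simple closed geodesics with conjugate points (even~$i$), and the surface of section in Theorem~\ref{mt:broken_book} is built from the Birkhoff annuli of \emph{all} of them. Without the even-indexed ones, property~(ii) can fail: the periodic orbit $\dot\gamma_2$ is disjoint from every $\gamma_i$ in the nested chain and lies entirely in $B$, so it never meets $\Sigma_0$; if moreover $\gamma_2$ happens to be disjoint from every other contractible waist (which nothing in your argument excludes), it never meets your $\Sigma$ at all, and being periodic it is not in $\Ws(K)$ either. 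The even-indexed geodesics also carry property~(iv): it is precisely Corollary~\ref{c:intersecting}, valid for geodesics with conjugate points, that gives the uniform return bound near their lifts in $\partial\Sigma\setminus K$.

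Two related points. First, under the hypothesis every non-degenerate simple closed geodesic without conjugate points is \emph{hyperbolic} (see the remark preceding Lemma~\ref{l:Ws_waist}), so your ``elliptic waists'' do not exist and cannot supply the return bound you invoke for~(iv). Second, for $M=S^2$ you take $\Sigma_0=\varnothing$; on a sphere with no waist at all (for instance the round metric, where every simple closed geodesic has conjugate points) your $\Sigma$ is then empty. The paper instead begins the $S^2$ case with a simple closed geodesic \emph{with} conjugate points, which always exists (Remark~\ref{r:S2}), and uses its Birkhoff annuli as the backbone. Finally, your sketch of~(ii) via ``Grayson's flow applied to a short geodesic arc'' is not meaningful as stated, since the curve shortening flow acts on embedded \emph{closed} curves; the actual mechanism is the combination of Theorem~\ref{t:scg2}(v) with the convexity Assumption~\ref{a:convexity} on~$B$.
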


For general non-degenerate Reeb flows on closed 3-manifolds, surfaces of section satisfying properties analogous to~(ii,iii,iv) of Theorem \ref{mt:broken_book} were constructed by Colin--Dehornoy--Rechtman \cite{Colin:2020tl} by applying surgery to certain holomorphic curves provided by Hutchings' embedded contact homology \cite{Hutchings:2014vp}. Colin--Dehornoy--Rechtman further employed such surfaces of section to produce a so-called broken book decomposition of the 3-manifold, which is a generalization of the classical notions of open book decomposition and of Hofer--Wysocki--Zehnder's finite energy foliations \cite{Hofer:2003wf}.

The mentioned works \cite{Hofer:1998vy,Hofer:2003wf,Colin:2020tl,Contreras:2021vx,Colin:2022tq} are ultimately based on holomorphic curves techniques \cite{Hofer:2002vt}, and the last three ones even on embedded contact homology \cite{Hutchings:2014vp}. In this paper we do not need any of these techniques, and instead employ the curve shortening flow \cite{Gage:1990ws, Grayson:1989tj}. Our approach allows us to obtain sharper results for geodesic flows. On the one hand, our theorems only require the non-existence or the non-degeneracy of contractible simple closed geodesics without conjugate points, but no conditions on the other closed geodesics. On the other hand, in all the statements except Theorem~\ref{mt:Kupka_Smale} we obtain surfaces of section all of whose components have genus at most one, which may be important for future applications. 
Incidentally, our arguments require a study of maximal families of pairwise disjoint simple closed geodesics of closed orientable Riemannian surfaces (Theorems~\ref{t:scg1} and~\ref{t:scg2}), which may have independent interest. 

Finally, we remark that, even though we only considered Riemannian geodesic flows, our results are valid as well for geodesic flows of reversible Finsler metrics, by using the generalization of the curve shortening flow developed by Oaks \cite{Oaks:1994aa} and further investigated by Angenent \cite{Angenent:2008aa} and De Philippis et al.\ \cite{De-Philippis:2022wz}.

\subsection{Organization of the paper}
In Section~\ref{s:curve_shortening} we recall the main properties of the curve shortening flow, we provide the details of some applications to the existence of simple closed geodesics that cannot be found in the literature, and we recall some features of conjugate points. In Section~\ref{s:contractible_scg} we study configurations of simple closed geodesics on closed Riemannian surfaces, whose properties will be an essential ingredient for our main theorems. Finally, in Section~\ref{s:proofs}, we shall state and prove Theorems~\ref{mt:Birkhoff}, \ref{mt:Bangert}, \ref{mt:Kupka_Smale}, and~\ref{mt:broken_book}.

\subsection{Acknowledgements} We are grateful to Marie-Claude Arnaud for a discussion concerning the stable manifold of hyperbolic closed geodesics without conjugate points. 
We are also particularly grateful to the many anonymous referees for their careful reading of the manuscript, and their helpful reports.

\section{Preliminaries}
\label{s:curve_shortening}

\subsection{The curve shortening flow}\label{ss:curve_shortening}
It is well known that closed geodesics are critical points of the length and energy functionals, and therefore their existence can be investigated by means of critical point theory \cite{Klingenberg:1978wy}. In this paper, we will be interested in contractible closed geodesics on surfaces; their critical point theory requires the curve shortening flow \cite{Grayson:1989tj}, whose main properties we shall now recall.

Let $(M,g)$ be a closed oriented Riemannian surface. For any  smooth embedded circle $\gamma:S^1\hookrightarrow M$, we denote by $\nu_\gamma$ its positively oriented normal vector field, and by $k_\gamma$ the signed geodesic curvature of $\gamma$. Here, $S^1=\R/\Z$.
We denote by $\Emb(S^1,M)$ the space of smooth embedded circles in $M$ endowed with the $C^\infty$ topology. The length functional
\begin{align}
\label{e:length}
 L(\gamma)=\int_{S^1} \| \dot\gamma(t) \|_g\, dt
\end{align}
is continuous over this space. Indeed, it is even differentiable, and its critical points are the simple closed geodesics (that is, the closed geodesics in $\Emb(S^1,M)$) with arbitrary time reparametrization. The \emph{curve shortening flow} is a continuous map 
\[
\UU\to\Emb(S^1,M),
\qquad
(s,\gamma_0)\mapsto\Psi_s(\gamma_0):=\gamma_s,\] defined on a maximal open neighborhood $\UU\subset[0,\infty)\times\Emb(S^1,M)$ of $\{0\}\times\Emb(S^1,M)$ by means of the following PDE:
\begin{align*}
 \partial_s\gamma_s=k_{\gamma_s}\nu_{\gamma_s}.
\end{align*}
Its main properties are the following.
\begin{itemize}
\item[{(i)}] $\Psi_0=\id$, and $\Psi_{s_2}\circ\Psi_{s_1}=\Psi_{s_2+s_1}$ for all $s_1,s_2\geq0$;\vspace{3pt}

\item[{(ii)}] $\Psi_s(\gamma\circ\theta)=\Psi_s(\gamma)\circ\theta$ for all $(s,\gamma)\in\UU$ and $\theta\in\Diff(S^1)$;\vspace{3pt}

\item[{(iii)}] $\tfrac{d}{ds} L(\Psi_s(\gamma))\leq0$ for all $(s,\gamma)\in\UU$, and the equality holds if and only if $\gamma$ is a simple closed geodesic (not necessarily parametrized with constant speed);\vspace{3pt}

\item[{(iv)}] for each $\gamma\in\Emb(S^1,M)$, if $s_\gamma\in(0,\infty]$ denotes the supremum of the times $s>0$ for which $(s,\gamma)\in\UU$, then $s_\gamma$ is finite if and only if $\Psi_s(\gamma)$ converges to a constant as $s\to s_\gamma$.

\end{itemize}
We refer the reader to \cite{Grayson:1989tj,De-Philippis:2022wz} and to the references therein for the proofs of these facts. For each $\ell>0$ and $\epsilon>0$, we consider the open subsets
\begin{align*}
\Emb(S^1,M)^{<\ell} & := \big\{ \gamma\in\Emb(S^1,M)\ \big|\ L(\gamma)<\ell \big\},
\\
\WW(\ell,\epsilon) & := \big\{ \gamma\in\Emb(S^1,M)\ \big|\ |L(\gamma)-\ell|<\epsilon^2,\ \|k_\gamma\|_{L^\infty}<\epsilon \big\}.
\end{align*}
The intersection
\begin{align*}
\KK_\ell:=\bigcap_{\epsilon>0} \WW(\ell,\epsilon)
\end{align*}
is precisely the subspace of those embedded circles that are reparametrizations of simple closed geodesics of length $\ell>0$. Moreover, we have $\KK_\ell=\varnothing$ if and only if $\WW(\ell,\epsilon)=\varnothing$ for $\epsilon>0$ sufficiently small.

In order to employ the curve shortening flow in the critical point theory of the length functional, the following property is crucial. Its proof can be extracted from Grayson's \cite{Grayson:1989tj}, and the details can also be found in \cite[Th.~1.2(iv)]{De-Philippis:2022wz}.

\begin{itemize}

\item[{(v)}] For each $\ell>0$ and $\epsilon>0$ there exists $\delta\in(0,\ell)$ and a continuous function $\tau:\Emb(S^1,M)^{<\ell+\delta}\to[0,\infty)$ such that, for each $\gamma\in\Emb(S^1,M)^{<\ell+\delta}$, we have $\tau(\gamma)<s_\gamma$ and
\begin{align*}
\qquad\ \
 \Psi_s(\gamma)\in\Emb(S^1,M)^{<\ell-\delta}\cup\WW(\ell,\epsilon),\quad\forall  s\in[\tau(\gamma),s_\gamma).
\end{align*}

\end{itemize}

A path-connected subset $U\subseteq M$ is \emph{weakly convex} when, for any pair of distinct points $x,y\in U$ that can be joined by an absolutely continuous curve contained in $U$ of length strictly less than the injectivity radius $\inj(M,g)$, the shortest geodesic segment joining $x$ and $y$ is entirely contained in $U$. 
One of the crucial properties of the curve shortening flow is that it preserves the embeddedness of loops. This was first proved by Gage \cite[Sect.~3]{Gage:1990ws} as a consequence of a suitable maximum principle. The same arguments actually imply the following analogous property.
\begin{itemize}

\item[{(vi)}] The curve shortening flow preserves any weakly convex open subset $U\subseteq M$, i.e.~for any smooth embedded circle $\gamma$ in $U$ and for any $s\in(0,s_\gamma)$, the embedded circle $\Psi_s(\gamma)$ is still contained in $U$.

\end{itemize}

The following is a rather straightforward consequence of the properties of the curve shortening flow.
\begin{Lemma}
\label{l:convergence_scg}
Let $U\subseteq M$ be a weakly convex open subset that is not simply connected, and $\CC\subset\Emb(S^1,U)$ a connected component containing loops that are non-contractible in $U$. Then, there exists a sequence $\gamma_n\in\CC$ converging in the $C^2$-topology to a simple closed geodesic $\gamma$ contained in $\overline U$ of length
\begin{align*}
 L(\gamma)=\inf_{\zeta\in\CC} L(\zeta)>0.
\end{align*}
\end{Lemma}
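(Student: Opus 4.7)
The plan is to apply Grayson's curve shortening flow to a length-minimizing sequence in $\CC$, use property (v) to push it into the near-critical regime $\WW(\ell,\epsilon)$, and then extract a $C^2$-convergent subsequence whose limit is a simple closed geodesic of length $\ell:=\inf_{\zeta\in\CC}L(\zeta)$ contained in $\overline U$. First I would verify that $\ell>0$. Since $\CC$ is path-connected, any two of its elements are free-homotopic in $U$, so by hypothesis every $\zeta\in\CC$ is non-contractible in $U$. Conversely, any $\eta\in\Emb(S^1,U)$ with $L(\eta)<\inj(M,g)$ is contractible in $U$: fixing $p:=\eta(0)$, each subarc of $\eta$ from $p$ to another point $q\in\eta$ lies in $U$ and has length $<\inj(M,g)$, so weak convexity places the $M$-minimizing geodesic from $p$ to $q$ inside $U$, and coning $\eta$ to $p$ along these geodesics gives a null-homotopy of $\eta$ within $U$. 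Hence $\ell\geq\inj(M,g)>0$.

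Next I would pick $\gamma_n\in\CC$ with $L(\gamma_n)\to\ell$ and run the curve shortening flow. By property (vi), $\Psi_s(\gamma_n)\in\Emb(S^1,U)$ for all $s\in[0,s_{\gamma_n})$, and continuity in $s$ confines this path to the connected component $\CC$; since loops in $\CC$ have length at least $\ell>0$, property (iv) rules out collapse and forces $s_{\gamma_n}=+\infty$. Fixing $\epsilon>0$ and letting $\delta,\tau$ be as in property (v), for $n$ large $L(\gamma_n)<\ell+\delta$, so $\Psi_{\tau(\gamma_n)}(\gamma_n)$ lies in $\Emb(S^1,M)^{<\ell-\delta}\cup\WW(\ell,\epsilon)$; as this curve stays in $\CC$, the first alternative is excluded. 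A diagonal argument then produces a sequence $\tilde\gamma_n\in\CC\cap\WW(\ell,1/n)$ with $L(\tilde\gamma_n)\to\ell$ and $\|k_{\tilde\gamma_n}\|_{L^\infty}\to 0$. Reparametrizing each $\tilde\gamma_n:S^1\to U$ with constant speed $L(\tilde\gamma_n)$, Arzelà-Ascoli extracts a subsequential $C^1$-limit $\gamma:S^1\to\overline U$; the uniform vanishing of the covariant acceleration $|\nabla_t\dot{\tilde\gamma}_n|=L(\tilde\gamma_n)^2|k_{\tilde\gamma_n}|\to 0$ forces $\nabla_t\dot\gamma\equiv 0$, so $\gamma$ is a closed geodesic of length $\ell$, and uniform control of $\ddot{\tilde\gamma}_n$ via the geodesic equation upgrades the convergence to $C^2$.

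The hard part will be ruling out the possibility that the limit $\gamma$ is a multiply covered shorter geodesic. If $\gamma$ were a $k$-fold iterate of a simple closed geodesic $\bar\gamma$ with $k\geq 2$, then, using the orientability of $M$ (inherited from the standing assumption in Section~\ref{ss:curve_shortening}), a tubular neighborhood $N$ of $\bar\gamma$ would be diffeomorphic to an annulus $S^1\times(-\rho,\rho)$, and for $n$ large $\tilde\gamma_n\subset N$. The degree of the composition $\pi\circ\tilde\gamma_n:S^1\to S^1$ with the core projection $\pi:N\to S^1$ is an integer that, by $C^0$-stability, equals $\deg(\pi\circ\gamma)=k$ for $n$ large. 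But an embedded circle in the annulus $N$ has degree $0$ or $\pm 1$ around the core, contradicting $k\geq 2$. Hence $\gamma$ is simple, completing the proof.
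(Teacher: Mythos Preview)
Your argument is correct and follows the paper's approach: bound $\ell>0$ via weak convexity, use property~(v) of the curve shortening flow to land in $\CC\cap\WW(\ell,1/n)$, then extract a $C^2$-limiting simple closed geodesic in $\overline U$ (the paper does the extraction via convergent initial data $(\gamma_n(0),\dot\gamma_n(0))$ and the geodesic ODE rather than Arzel\`a--Ascoli, and handles simplicity in one line by invoking the fact that a $C^2$-limit of embedded circles on an orientable surface is simple, rather than spelling out your degree argument). One small remark: before your multiple-cover analysis you should also note that a \emph{transverse} self-intersection of the limit geodesic would persist under $C^1$-perturbation and contradict embeddedness of $\tilde\gamma_n$, so the multiple-cover case is indeed the only one left to exclude---your phrase ``the hard part'' suggests you already have this in mind.
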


\begin{Remark}
The simple closed geodesic $\gamma$ may not be a waist (see the definition in Section~\ref{ss:waist}) if it intersects the boundary $\partial U$.\hfill\qed
\end{Remark}

\begin{proof}[Proof of Lemma~\ref{l:convergence_scg}]
We claim that 
\begin{align*}
 \ell:=\inf_{\zeta\in\CC} L(\zeta)
 \geq
 2\,\inj(M,g)>0.
\end{align*}
Indeed, assume by contradiction that $L(\zeta)<2\,\inj(M,g)$ for some $\zeta\in\CC$. This implies that $\zeta$ can be written as 
\[\zeta(t)=\exp_{\zeta(0)}(V(t))\]
for some smooth function $V:S^1\to T_{\zeta(0)}M$. We define the smooth homotopy 
\begin{align*}
\zeta_r:S^1\to M,\  \zeta_r(t):=\exp_{\zeta(0)}(rV(t)),\qquad
r\in[0,1],
\end{align*}
which satisfies $\zeta_0\equiv\zeta(0)$ and $\zeta_1=\zeta$. Since the open subset $U$ is weakly convex, each $\zeta_r$ is contained in $U$. Therefore $\zeta$ is contractible in $U$, which is a contradiction.

We claim that 
\begin{align*}
\WW(\ell,\epsilon)\cap\CC\neq\varnothing,
\qquad
\forall \epsilon>0.
\end{align*}
Indeed, assume that $\WW(\ell,\epsilon)\cap\CC=\varnothing$ for some $\epsilon>0$.
Consider the constant $\delta>0$ and the continuous function $\tau:\Emb(S^1,M)^{<\ell+\delta}\to(0,\infty)$ provided by property~(v). For each $\zeta_0\in\CC$ of length $L(\zeta_0)<\ell+\delta$, property~(vi) guarantees that $\zeta_s:=\Psi_s(\zeta_0)\in\CC$ for all $s>0$ for which it is well defined. Property (v) then implies $L(\zeta_{\tau(\zeta)})<\ell-\delta$, contradicting the definition of $\ell$.

We choose an arbitrary $\gamma_n\in\WW(\ell,1/n)$ parametrized with constant speed. Up to extracting a subsequence, we have that $\gamma_n(0)\to x$ and $\dot\gamma_n(0)\to v$ for some $(x,v)\in TM$ with $\|v\|=\ell$. The curve $\gamma:S^1\to M$, $\gamma(t):=\exp_x(tv)$ is a closed geodesic of length $\ell$, and $\gamma_n\to\gamma$ in the $C^1$ topology. Since $\|k_{\gamma_n}-k_\gamma\|_{L^\infty}\leq 1/n$, 
 actually $\gamma_n\to\gamma$ in the $C^2$ topology. Finally, since the closed geodesic $\gamma$ is the $C^2$ limit of embedded circles on an orientable surface, $\gamma$ is a simple closed geodesic.
\end{proof}

We denote by $SM=\big\{v\in TM\ \big|\ \|v\|_g=1\big\}$ the unit tangent bundle, by $\pi:SM\to M$ the base projection, and by $\phi_t:SM\to SM$ the geodesic flow, which is defined by $\phi_t(\dot\gamma(0))=\dot\gamma(t)$ where $\gamma:\R\looparrowright M$ is a geodesic  parametrized with unit speed $\|\dot\gamma\|_g\equiv1$. In Section~\ref{s:contractible_scg}, we will need the following property of weakly convex sets.

\begin{Lemma}
\label{l:weakly_locally_convex}
If $U\subseteq M$ is a weakly convex subset, and $K\subset SM$ is a subset invariant under the geodesic flow $($i.e.~$\phi_t(K)=K$ for all $t\in\R$$)$, then any path-connected component of $U\setminus\pi(K)$ is weakly convex.
\end{Lemma}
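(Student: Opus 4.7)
My plan is to take two points $x,y$ in a path-connected component $V$ of $U\setminus\pi(K)$ joined by an absolutely continuous curve $c:[0,1]\to V$ of length strictly less than $\inj(M,g)$, and to show that the unique shortest geodesic segment $\sigma$ in $M$ from $x$ to $y$ is contained in $V$. Weak convexity of $U$ applied directly to the path $c$ already places $\sigma$ in $U$, so the only real task is to show that $\sigma\cap\pi(K)=\varnothing$; connectedness of $\sigma$ together with $\sigma(0)=x\in V$ will then yield $\sigma\subset V$.

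I would proceed by interpolation. For each $t\in[0,1]$, the sub-path $c|_{[0,t]}$ has length less than $\inj(M,g)$, so the unique shortest geodesic segment $\sigma_t$ from $x$ to $c(t)$ is well defined; weak convexity of $U$ applied to $c|_{[0,t]}$ places each $\sigma_t$ in $U$, and the family depends continuously on $t$ with $\sigma_0=\{x\}$ and $\sigma_1=\sigma$. The plan is then to show that the subset
\[
T:=\big\{t\in[0,1]\ \big|\ \sigma_t\cap\pi(K)=\varnothing\big\}
\]
is clopen in $[0,1]$, which, combined with $0\in T$, forces $T=[0,1]$ and in particular $\sigma\cap\pi(K)=\varnothing$. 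Openness of $T$ will follow from compactness of $\sigma_t$ together with closedness of $\pi(K)$ in $M$; the latter can be arranged by replacing $K$ with its closure $\overline{K}$, which remains flow-invariant since the geodesic flow is continuous (in the applications of Section~\ref{s:contractible_scg}, $K$ will already be closed).

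The main obstacle will be showing that $T$ is also closed, which I plan to handle via a transversality dichotomy exploiting the flow-invariance of $K$. If $\sigma_t$ meets $\pi(K)$ at a point $z=\pi(v)$ with $v\in K$, then by flow-invariance the entire complete geodesic $r\mapsto\pi(\phi_r(v))$ is contained in $\pi(K)$; were $\sigma_t$ tangent to this geodesic at $z$, uniqueness of geodesics with prescribed initial position and velocity would force $\sigma_t$ to be a sub-arc of it, contradicting $x=\sigma_t(0)\in V\subset M\setminus\pi(K)$. Thus every intersection of $\sigma_t$ with $\pi(K)$ is a transverse crossing of a smooth geodesic arc lying inside $\pi(K)$, and such transverse crossings persist under the $C^{1}$-small perturbations of $\sigma_t$ produced by varying $t$; this makes $[0,1]\setminus T$ open and completes the argument. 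The crucial point exploited throughout is that flow-invariance of $K$ upgrades each point of $\pi(K)$ into a whole smooth geodesic arc inside $\pi(K)$, at which the geodesic equation provides the necessary rigidity.
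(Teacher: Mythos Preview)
Your interpolation argument is sound when $\pi(K)$ is closed, but the step ``replace $K$ by $\overline{K}$'' does not salvage the general case: the hypothesis is $\pi(K)\subset U$, and $\pi(\overline{K})=\overline{\pi(K)}$ may well meet $M\setminus U$, so after passing to the closure you are no longer proving the stated lemma. Openness of $T$ genuinely needs $\pi(K)$ closed; the transversality argument you give for closedness of $T$ cannot be recycled here, because in the limit the approximating geodesics $\eta_n\subset\pi(K)$ converge to a geodesic $\eta$ that is only known to lie in $\overline{\pi(K)}$, and in the tangential case you cannot rule out $x\in\overline{\pi(K)}\setminus\pi(K)$. Since every application in the paper has $K$ closed, your argument is adequate for those, but it does not establish the lemma as written.

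The paper bypasses the interpolation entirely with a one-shot separation argument that needs no closedness of $K$. Assume the minimizing geodesic $\sigma$ from $x$ to $y$ meets $\pi(K)$ at some $z=\pi(v)$, and let $\eta$ be the geodesic arc through $z$ tangent to $v$ inside the normal ball $B=B(z,\inj(M,g))$; flow-invariance gives $\eta\subset\pi(K)$, and your own tangency dichotomy shows $\sigma$ crosses $\eta$ transversely, so $x$ and $y$ lie in different components of $B\setminus\eta$. But the short curve $c\subset V$ also lies in $B$ (for $p$ on $c$, averaging the two triangle inequalities $d(z,p)\leq d(z,x)+d(x,p)$ and $d(z,p)\leq d(z,y)+d(y,p)$ gives $2d(z,p)\leq d(x,y)+L(c)<2\inj(M,g)$) and avoids $\eta\subset\pi(K)$, a contradiction. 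This uses only the single geodesic $\eta$ that is genuinely inside $\pi(K)$, so no limiting procedure and hence no closedness is required; it is also considerably shorter than the clopen scheme. Your transversality dichotomy is exactly the same key observation the paper exploits---the difference is that the paper applies it once, to $\sigma$ itself, rather than to a continuous family.
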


\begin{proof}
Let $V\subset U\setminus\pi(K)$ be a path-connected component, and consider two arbitrary distinct points $x_1,x_2\in V$ that can be joined by an absolutely continuous curve $\zeta$ contained in $V$ of length strictly less than the injectivity radius $\inj(M,g)$. Since $U$ is weakly convex, the shortest geodesic segment $\gamma$ joining $x_1$ and $x_2$ is contained in $U$. Let us assume by contradiction that $\gamma\cap\pi(K)\neq\varnothing$, and choose a point $v\in K$ such that $x:=\pi(v)\in\gamma$. Let $B\subset M$ be the Riemannian open ball of radius $\inj(M,g)$ centered at $x$, and $\eta\subset \pi(K)\cap B$ the maximal geodesic segment passing through $x$ tangent to $v$. Notice that $\eta$ separates $B$, and intersects $\gamma$ only in $x$. In particular, $x_1$ and $x_2$ lie in distinct connected components of $B\setminus\eta$. This is not possible, since the curve $\zeta$ joining $x_1$ and $x_2$ is contained in $B\setminus\pi(K)\subset B\setminus\eta$; indeed, any absolutely continuous curve joining $x_1$ and $x_2$ and not entirely contained in $B$ must have length larger than or equal to $\inj(M,g)$.
\end{proof}

\subsection{Types of closed geodesics}\label{ss:waist}
Let $\gamma$ be a closed geodesic of length $\ell>0$ in the closed oriented Riemannian surface $(M,g)$. We parametrize $\gamma$ with unit speed, so that it is a curve of the form 
\[\gamma:\R/\ell\Z\to M,\qquad \gamma(t)=\pi\circ\phi_t(v),\] 
where $v\in SM$, and the length $\ell$ is the minimal period of $\gamma$.
The Floquet multipliers of $\gamma$ are the eigenvalues of the linearized Poincaré map $d\phi_\ell(v)|_V$, where $V$ is the vector subspace
\begin{align*}
 V := \big\{ w\in T_{v}SM\ \big|\ g(v,d\pi(v)w)=0\big\}.
\end{align*}
Since the linearized Poincar\'e map preserves a symplectic structure on the plane $V$, the Floquet multipliers are of the form $\sigma,\sigma^{-1}\in \U\cup\R\setminus\{0\}$, where $\U$ denotes the unit circle in the complex plane. The closed geodesic $\gamma$ is called \emph{non-degenerate}\footnote{A closed geodesic $\gamma$ of length $\ell$ and Floquet multiplier $\sigma=e^{i2\pi/k}$ for some integer $k\geq2$ is non-degenerate. However, the $k$-th iterate of $\gamma$ is degenerate. In this paper, we will not need to consider iterates of closed geodesics.} when $\sigma\neq 1$, and \emph{hyperbolic} when $\sigma\in\R\setminus\{1,-1\}$. 

We recall that a simple closed geodesic $\gamma:S^1\hookrightarrow M$ is called a \emph{waist} when any absolutely continuous curve $\zeta:S^1\to M$ that is sufficiently $C^0$-close to $\gamma$ satisfies $L(\zeta)\geq L(\gamma)$; here $L$ is the length functional~\eqref{e:length}. 

\begin{Remark}\label{r:strict_waists}
Let $\gamma:S^1\hookrightarrow M$ be a waist,  $W\subset M$ a sufficiently small open neighborhood of $\gamma$, and $\WW$ the space of absolutely continuous curves $\zeta:S^1\to W$ homotopic to $\gamma$ within $W$. One can easily prove that any $\zeta\in\WW$ has length $L(\zeta)\geq L(\gamma)$. Moreover, the equality $L(\zeta)= L(\gamma)$ holds if and only if $\zeta$ becomes itself a waist after being reparametrized with constant speed $\|\dot\zeta\|_g\equiv L(\zeta)$. 
If $\gamma$ is a non-degenerate waist, then any $\zeta\in\WW$ satisfying the equality $L(\zeta)=L(\gamma)$ must be geometrically equivalent to $\gamma$, i.e.~$\zeta\circ\theta=\gamma$ for some homeomorphism $\theta:S^1\to S^1$.
\hfill\qed
\end{Remark}

Later on, we shall employ waists to infer the existence of other simple closed geodesics, according to the following lemma. For closed geodesics that are possibly self-intersecting, the analogous lemma is well known, see e.g.~\cite{Bangert:1993ue}.

\begin{Lemma}
\label{l:minmax}
Let $(M,g)$ be an oriented Riemannian surface.
\begin{itemize}
\item[(i)] If $A\subset M$ is a compact annulus bounded by two waists, then $\interior(A)$ contains a non-contractible simple closed geodesic.\vspace{3pt}

\item[(ii)] If $D$ is a compact disk bounded by a waist, then $\interior(D)$ contains a simple closed geodesic.

\end{itemize}
\end{Lemma}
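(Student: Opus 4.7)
My plan combines the infimum construction of Lemma~\ref{l:convergence_scg} with a basin-of-attraction and connectedness argument for the curve shortening flow $\Psi_s$.

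For (i), I first observe that $\interior(A)$ is weakly convex: taking $U=M$ (trivially weakly convex) and letting $K\subset SM$ be the flow-invariant set of unit tangent vectors over $\gamma_1\cup\gamma_2$, Lemma~\ref{l:weakly_locally_convex} applied to the path component $\interior(A)$ of $M\setminus\pi(K)$ gives the claim. Since $\interior(A)$ is a non-simply connected open annulus, Lemma~\ref{l:convergence_scg} applied to the connected component $\CC\subset\Emb(S^1,\interior(A))$ of loops isotopic to the core produces a simple closed geodesic $\gamma\subset A=\overline{\interior(A)}$ with $L(\gamma)=\inf_{\zeta\in\CC}L(\zeta)=:\ell>0$. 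If $\gamma\subset\interior(A)$ we are done; otherwise $\gamma\cap\partial A\neq\varnothing$, and the uniqueness of geodesics through a tangent direction forces $\gamma\in\{\gamma_1,\gamma_2\}$. To rule out this latter case I argue by contradiction, assuming no interior simple closed geodesic isotopic to the core exists. The flow $\Psi_s$ preserves $\CC$ because $\gamma_1,\gamma_2$ are stationary under $\Psi_s$ (the avoidance principle then keeps $\Psi_s(\zeta)\subset\interior(A)$), and loops in $\CC$ cannot collapse in finite time (they remain non-contractible in the simply connected half-disk neighborhoods near $\partial A$), whence $s_\zeta=\infty$. By property~(v), each $\omega$-limit set under $\Psi_s$ is a non-empty compact connected subset of simple closed geodesics in $\overline{A}$ isotopic to the core, and hence by assumption reduces to a single $\gamma_i$. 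Now the subsets
\[
 U_i:=\big\{\zeta\in\CC\ \big|\ \exists\, s\geq 0,\ d_{C^0}(\Psi_s(\zeta),\gamma_i)<\epsilon\big\},\qquad i=1,2,
\]
are open by continuity of $\Psi_s$, non-empty because they contain small inward perturbations of $\gamma_i$, and they cover $\CC$. For $\epsilon$ small they are also disjoint: the waist property combined with length monotonicity traps any trajectory entering the $\epsilon$-neighborhood of $\gamma_i$ at length $\geq L(\gamma_i)$, preventing the decreasing length from later reaching the other waist neighborhood. This disconnects the path-connected $\CC$, a contradiction.

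For (ii), the same strategy applies with $D$ in place of $A$. The space $\CC_D=\Emb(S^1,\interior(D))$ is path-connected since all simple loops in a disk are isotopic, and the stationarity of the waist $\gamma$ together with the avoidance principle confines $\Psi_s$ to $\interior(D)$. By properties~(iv) and~(v), each trajectory either collapses to a point in finite time or converges to a simple closed geodesic in $\overline{D}$. Assuming no interior simple closed geodesic exists, the open sets
\[
 U_{\mathrm{collapse}}=\{\zeta\in\CC_D\,:\,s_\zeta<\infty\},\qquad U_\gamma=\big\{\zeta\in\CC_D\,:\,\exists\, s,\ d_{C^0}(\Psi_s(\zeta),\gamma)<\epsilon\big\}
\]
are non-empty and cover $\CC_D$, and they are disjoint because a loop whose length has dropped below $\tfrac12 L(\gamma)$ can never again lie $C^0$-close to $\gamma$. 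This again contradicts the connectedness of $\CC_D$.

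The main obstacle is the trapping/disjointness statement for the basins. In (ii) it is essentially automatic from length monotonicity and the positivity of $L(\gamma)$. In (i) the delicate case is $L(\gamma_1)=L(\gamma_2)$, when length monotonicity alone cannot prevent a trajectory from drifting between the two waists; I would address this with a local stability analysis for $\Psi_s$ near a waist, essentially using $L-L(\gamma_i)$ as a Lyapunov function on a small $C^0$-neighborhood of $\gamma_i$. If the waists turn out to be truly degenerate, a fallback is to run a mountain-pass min-max between $\gamma_1$ and $\gamma_2$ on $\CC$, which yields a third simple closed geodesic in $\interior(A)$ whenever the min-max level strictly exceeds $\max(L(\gamma_1),L(\gamma_2))$.
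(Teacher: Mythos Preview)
Your basin-of-attraction/connectedness strategy has a genuine gap in the disjointness step, and filling it essentially forces you back onto the paper's mountain-pass argument.

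The problem is with your claim that ``the waist property combined with length monotonicity traps any trajectory entering the $\epsilon$-neighborhood of $\gamma_i$.'' The waist property only gives $L(\zeta)\geq L(\gamma_i)$ for $\zeta$ that is $C^0$-close to $\gamma_i$; it gives no upper bound on $L(\zeta)$, and it does \emph{not} say the flow line stays in that neighborhood. So nothing prevents a trajectory from entering the $C^0$ $\epsilon$-neighborhood of $\gamma_1$ with very large length, then drifting across the annulus and later entering the neighborhood of $\gamma_2$. The same objection applies in (ii): a curve $C^0$-close to the waist $\gamma$ could in principle later flow away and collapse; your length argument only says that once the length drops below $L(\gamma)$ the curve cannot \emph{return} to the neighborhood, not that it was never there. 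Thus $U_1\cap U_2$ (resp.\ $U_{\mathrm{collapse}}\cap U_\gamma$) is not shown to be empty.

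What is actually needed to make the basins disjoint is that each waist is \emph{attracting} for $\Psi_s$: there is a neighborhood $W_i$ of $\gamma_i$ and $\rho_i>0$ so that every non-contractible embedded curve in $W_i$ meeting $\partial W_i\setminus\gamma_i$ has length $\geq L(\gamma_i)+\rho_i$; then the sublevel $\{L<L(\gamma_i)+\rho_i\}\cap W_i$ is forward-invariant and the basins, defined with this extra length constraint, really are disjoint. But this barrier estimate is precisely the crux of the paper's proof (it is how the paper shows the min-max level strictly exceeds $\max\{L(\gamma_0),L(\gamma_1)\}$), and it requires the auxiliary hypothesis that there is no simple closed geodesic in $W_i\setminus\gamma_i$. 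So once you prove the estimate you need, you have done the essential work of the paper's min-max, and it is simpler to run the min-max directly as the paper does. Your own fallback is exactly that argument.
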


\begin{proof}
Let $A\subset M$ be a compact annulus bounded by the waists $\gamma_0,\gamma_1$, with $L(\gamma_1)\leq L(\gamma_0)$. Let $W\subset A\setminus\gamma_1$ be a sufficiently small open neighborhood of $\gamma_0$.  If $W\setminus\gamma_0$ contains a simple closed geodesic, such a closed geodesic must be $C^0$-close to $\gamma_0$ and in particular non-contractible in $A$, and we are done. Assume now that $W\setminus\gamma_0$ does not contain any simple closed geodesic. In particular, every absolutely continuous curve $\gamma:S^1\to W$ homotopic to $\gamma_0$ within $W$ and not geometrically equivalent to $\gamma_0$ satisfies the strict inequality $L(\gamma)> L(\gamma_0)$ (see Remark~\ref{r:strict_waists}). Up to  shrinking $W$, there exists $\rho>0$ such that any smooth curve $\gamma:S^1\hookrightarrow \overline W$ homotopic to $\gamma_0$ within $\overline W$ and that intersects $\partial W\setminus\gamma_0$ has length $L(\gamma)\geq L(\gamma_0)+\rho$. We now detect a simple closed geodesic in $\interior(A)$ by means of a minmax procedure, as follows: let
\begin{align}
\label{e:minmax}
 \ell:=\inf_{\FF} \max_{r\in[0,1]} L(\zeta_r),
\end{align}
where the infimum ranges over the family $\FF$ of continuous homotopies of smooth embedded loops $\zeta_r:S^1\hookrightarrow A$, $r\in[0,1]$, such that $\zeta_0=\gamma_0$, $\zeta_1=\gamma_1$, and $\zeta_r\subset\interior(A)$ for all $r\in(0,1)$. Notice that, for any such homotopy $(\zeta_r)_{r\in[0,1]}$, there exists a minimal $r_0\in(0,1]$ such that the curve $\zeta_r$ intersects $\partial W\setminus\gamma_0$, and therefore \[L(\zeta_{r_0})\geq L(\gamma_0)+\rho.\] 
This readily implies that
\begin{align*}
 \ell \geq L(\gamma_0)+\rho>L(\gamma_0).
\end{align*}
We claim that $\ell$ is the length of a non-contractible simple closed geodesic in $A$, which must actually be contained in $\interior(A)$ since $L(\gamma_1)\leq L(\gamma_0)<\ell-\rho$. Let us assume that this is not the case. In particular, for $\epsilon>0$ small enough, no embedded loop $\gamma\in\WW(\ell,\epsilon)$ is entirely contained in $A$ and non-contractible therein. We apply property~(v) of the curve shortening flow, which provides $\delta>0$ and a suitable continuous function $\tau:\Emb(S^1,M)^{<\ell+\delta}\to(0,\infty)$. Let $(\zeta_r)_{r\in[0,1]}$ be a homotopy in $\FF$ that is optimal up to $\delta$, meaning that 
\[\max_{r\in[0,1]} L(\zeta_r)< \ell+\delta.\]
We push the homotopy by means of the curve shortening flow, defining
\begin{align*}
 \eta_r:=\Psi_{\tau(\zeta_r)}(\zeta_r),\ r\in[0,1].
\end{align*}
Property~(v) implies that $\eta_r\in\Emb(S^1,M)^{<\ell-\delta}\cup\WW(\ell,\epsilon)$ for each $r\in[0,1]$. Since the compact annulus $A$ is bounded by simple closed geodesics, $\interior(A)$ is weakly convex. 
Property~(vi) implies that $\eta_r\subset \interior(A)$ for each $r\in(0,1)$. Since $\zeta_0=\gamma_0$ and $\zeta_1=\gamma_1$ are simple closed geodesics, we have $\eta_0=\gamma_0$ and $\eta_1=\gamma_1$. 
Therefore the homotopy $(\eta_r)_{r\in[0,1]}$ belongs to $\FF$. Since no curve $\gamma\in\WW(\ell,\epsilon)$ is entirely contained in $A$ and non-contractible therein, we conclude that  $\eta_r\in\Emb(S^1,M)^{<\ell-\delta}$ for each $r\in[0,1]$, contradicting the definition of the minmax value $\ell$.

The case of a compact disk $D\subset M$ bounded by a waist $\gamma$ is analogous, except that in the definition of the minmax~\eqref{e:minmax} the infimum ranges over the family of continuous homotopies of smooth embedded loops $\zeta_r:S^1\hookrightarrow D$, $r\in[0,1]$, such that $\zeta_0=\gamma$,  $\zeta_r\subset\interior(D)$ for all $r\in(0,1]$, and $L(\zeta_1)<L(\gamma)$.
\end{proof}

\subsection{Conjugate points}\label{ss:conjugate_points}
Let us recall the classical notion of conjugate points. If $\gamma:\R\to M$ is an (open or closed) geodesic parametrized with unit speed, the points $\gamma(t_1),\gamma(t_2)$ are conjugate along $\gamma|_{[t_1,t_2]}$ when 
\begin{align*}
\ker d(\pi\circ\phi_{t_2-t_1}(\dot\gamma(t_1)))|_{\ker(d\pi(\dot\gamma(t_1)))}\neq\{0\}.
\end{align*}

On orientable Riemannian surfaces, simple closed geodesics with conjugate points are not waists, and actually satisfy the following lemma due to Bangert \cite[Lemma~2]{Bangert:1993ue}.

\begin{Lemma}
\label{l:Bangert}
Let $(M,g)$ be an orientable Riemannian surface, and $\gamma:S^1\hookrightarrow M$ a simple closed geodesic with conjugate points. Then, for any open neighborhood $U\subset M$ of $\gamma$, any connected component $V\subset U\setminus\gamma$ contains a smooth embedded circle $\zeta:S^1\hookrightarrow V$ homotopic to $\gamma$ within $U$ such that $L(\zeta)<L(\gamma)$.
\hfill\qed
\end{Lemma}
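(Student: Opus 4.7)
The plan is to exploit the second variation of the length functional along $\gamma$ to construct a strictly positive normal deformation that shortens $\gamma$; this will immediately produce the desired embedded circle on one side of $\gamma$, and the same argument with the opposite normal orientation handles the other side.

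First I would parametrize $\gamma$ by arclength and let $\nu_\gamma$ be a unit normal along $\gamma$, which is globally defined on the embedded circle since $M$ is orientable. The hypothesis furnishes $t_1<t_2$ such that $\gamma(t_1),\gamma(t_2)$ are conjugate along $\gamma|_{[t_1,t_2]}$; I may assume $t_2$ is the first conjugate point to $t_1$ along $\gamma$ after $t_1$. The associated normal Jacobi field $J=j\,\nu_\gamma$ then satisfies the scalar Jacobi equation $j''+K_g\,j=0$ with $j(t_1)=j(t_2)=0$, and by minimality of $t_2$ it has a single sign on $(t_1,t_2)$; choosing $j>0$ there forces $j'(t_1)>0$ and $j'(t_2)<0$.

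Next I would extend $j$ by zero to a continuous, piecewise smooth function $\tilde j:S^1\to\R$, and pick any smooth $h:S^1\to\R$ with $h>0$. Writing $I(u,v)=\int_{S^1}(u'v'-K_g\,uv)\,dt$ for the index form on normal variations of $\gamma$, a single integration by parts on $[t_1,t_2]$, using $j'' + K_g j = 0$ and $j(t_1)=j(t_2)=0$, yields
\[
I(\tilde j,\tilde j)=0,\qquad I(\tilde j,h)=j'(t_2)h(t_2)-j'(t_1)h(t_1)<0.
\]
Hence $f_\epsilon:=\tilde j+\epsilon h$ is a strictly positive piecewise smooth function on $S^1$ satisfying $I(f_\epsilon,f_\epsilon)=2\epsilon\,I(\tilde j,h)+\epsilon^2 I(h,h)<0$ for all sufficiently small $\epsilon>0$. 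Since $f_\epsilon$ has corners only at $t_1$ and $t_2$, where it takes the strictly positive values $\epsilon h(t_i)$, a localized $C^\infty$ smoothing there provides $f\in C^\infty(S^1,\R)$ with $f>0$ everywhere and $I(f,f)<0$.

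Finally I would consider the one-sided deformation $\gamma_s(t):=\exp_{\gamma(t)}(s\,f(t)\,\nu_\gamma(t))$ for $s\geq 0$. Since $\gamma$ is an embedded circle, the normal exponential map restricted to the $\nu_\gamma$-side is a diffeomorphism onto a half-tubular neighborhood, so $\gamma_s$ is a smooth embedded circle lying strictly on the $\nu_\gamma$-side of $\gamma$ for every small $s>0$. The standard second variation formula gives
\[
L(\gamma_s)=L(\gamma)+\tfrac{s^2}{2}I(f,f)+O(s^3)<L(\gamma)
\]
for $s>0$ small. Given the neighborhood $U$, for $s\in(0,s_0]$ sufficiently small the entire family $\gamma_s$ is contained in $U$, so $\gamma_{s_0}$ lies in the connected component of $U\setminus\gamma$ on the $\nu_\gamma$-side and the path $s\mapsto\gamma_s$ is a homotopy in $U$ from $\gamma$ to $\gamma_{s_0}$. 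Replacing $\nu_\gamma$ by $-\nu_\gamma$ yields the analogous circle on the opposite side, covering any component $V\subset U\setminus\gamma$ that accumulates on $\gamma$.

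The main obstacle is the production of a \emph{positive} test function with negative index form: the usual Morse-theoretic argument at an interior conjugate point delivers only a sign-changing variation, which would yield a shorter curve that crosses $\gamma$ and thus lies in no single component of $U\setminus\gamma$. The construction above circumvents this by using that, between two consecutive conjugate points, the Jacobi field has one sign and its derivatives at the two endpoints have opposite signs, which makes $I(\tilde j,h)$ strictly negative against any positive test function $h$ and so permits a positive perturbation of $\tilde j$ with strictly negative index form.
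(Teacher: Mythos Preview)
The paper does not prove this lemma itself; it quotes the statement from Bangert \cite{Bangert:1993ue} and marks it with a \qed. So there is no in-paper argument to compare against, and I will assess your proof on its own.

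Your construction tacitly assumes that the first pair of conjugate points lies within one period, i.e.\ $t_2-t_1\leq\ell:=L(\gamma)$; otherwise the ``extension by zero'' of $j$ does not define a function on $S^1=\R/\ell\Z$. That assumption can fail. On a surface of revolution with profile $r(z)$, the circle at a strict local maximum $z_0$ of $r$ is a simple closed geodesic of length $\ell=2\pi r(z_0)$ along which the Gaussian curvature is the constant $K=-r''(z_0)/r(z_0)>0$; the first conjugate distance is then $\pi/\sqrt{K}$, and one readily arranges $-r''(z_0)<\tfrac{1}{4r(z_0)}$ so that $\pi/\sqrt{K}>\ell$. In that regime your $\tilde j$ cannot be formed, yet the lemma still holds (already the constant $f\equiv1$ gives $I(1,1)=-K\ell<0$).

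A fix that covers all cases at once: let $\lambda_0$ be the bottom eigenvalue of the periodic operator $-\tfrac{d^2}{dt^2}-K_g(\gamma(t))$ on $\R/\ell\Z$, with strictly positive ground state $f_0$. If $\lambda_0\geq0$ then $f_0''+Kf_0=-\lambda_0 f_0\leq0$ with $f_0>0$ and $\ell$-periodic; integrating $(j''+Kj)f_0=0$ by parts over $[a,b]$ for a Jacobi field $j>0$ on $(a,b)$ with $j(a)=j(b)=0$ and $j'(a)>0>j'(b)$ yields $0=j'(b)f_0(b)-j'(a)f_0(a)+\int_a^b j\,(f_0''+Kf_0)\,dt<0$, a contradiction. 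Hence $\lambda_0<0$, so $I(f_0,f_0)<0$ with $f_0>0$ smooth and $\ell$-periodic, and your final normal-exponential step with $f:=f_0$ produces the one-sided embedded shortening exactly as you wrote. Your explicit construction is correct and instructive when $t_2-t_1\leq\ell$, but you should add this eigenvalue argument (or an equivalent Sturm comparison) to close the remaining case.
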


On an orientable Riemannian surface, a non-degenerate simple closed geodesic is a waist if and only if it does not have conjugate points, see for instance \cite[Lemma~4.1(iii) and Prop.~4.2(iii,vii)]{De-Philippis:2022wz}. Moreover, a non-degenerate waist $\gamma$ is hyperbolic, see for instance \cite[Theorem~3.4.2]{Klingenberg:1995aa}, and the corresponding orbit $\dot\gamma$ of the geodesic flow $\phi_t$ has a stable manifold
\begin{align*}
 \Ws(\dot\gamma)=\big\{ z\in SM\ \big|\ \omega(z)=\dot\gamma\big\},
\end{align*}
which is an injectively immersed surface in $SM$. Here, $\omega(z)$ denotes the $\omega$-limit of $z$, i.e.
\begin{align*}
 \omega(z)=\bigcap_{t>0}\overline{\phi_{[t,\infty)}(z)}.
\end{align*}

\begin{Lemma}\label{l:Ws_waist}
Let $(M,g)$ be an orientable closed Riemannian surface, and $\gamma$ a non-de\-gen\-er\-ate waist. For any sufficiently small neighborhood $V\subset SM$ of $\dot\gamma$, if $W\subset V\cap\Ws(\dot\gamma)$ is the path-connected component containing $\dot\gamma$, the base projection $\pi|_W:W\to M$ is a diffeomorphism onto a neighborhood of $\gamma$. In particular, for each $z\in \Ws(\dot\gamma)\setminus\dot\gamma$ with associated geodesic $\zeta(t):=\pi\circ\phi_t(z)$, there exists $t\in\R$ such that $\zeta|_{[t,\infty)}$ does not intersect~$\gamma$.
\end{Lemma}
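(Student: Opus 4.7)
The plan is to reduce the lemma to the infinitesimal statement that, at every $v\in\dot\gamma$, the differential $d\pi(v)$ restricts to a linear isomorphism $T_v\Ws(\dot\gamma)\to T_{\pi(v)}M$. Granting this, the inverse function theorem applied along the compact orbit $\dot\gamma$ produces a tubular neighborhood $V_0$ of $\dot\gamma$ in $SM$ such that the path-connected component $W_0\subset V_0\cap\Ws(\dot\gamma)$ containing $\dot\gamma$ is mapped diffeomorphically by $\pi$ onto a tubular neighborhood of $\gamma$ in $M$; global injectivity of $\pi|_{W_0}$ is forced by the uniqueness of such a tubular neighborhood of the embedded circle $\gamma$. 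The first assertion of the lemma then holds for every $V\subset V_0$.

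To obtain the isomorphism, I would use that, since $\gamma$ is a non-degenerate waist, the facts recalled in Section~\ref{ss:conjugate_points} imply that $\gamma$ has no conjugate points and is hyperbolic, so $\Ws(\dot\gamma)$ is a smooth immersed $2$-surface and $T_v\Ws(\dot\gamma)=\R\,X(v)\oplus\Es(v)$, where $\Es(v)$ is the $1$-dimensional stable line inside the symplectic transverse subspace $V_v:=\{w\in T_v SM : g(v,d\pi(v)w)=0\}$. Since $d\pi(v)X(v)=v$ lies along $\gamma$ and $d\pi(v)$ maps $V_v$ into the $g$-orthogonal complement $v^\perp\subset T_{\pi(v)}M$, the claim reduces to showing that $d\pi(v)w\neq 0$ for every nonzero $w\in\Es(v)$. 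Identifying $w$ with the normal Jacobi field $J$ along $\gamma$ determined by its initial position $J(0)=d\pi(v)w$ and covariant derivative encoded by $w$, stability $w\in\Es(v)$ is equivalent to the exponential decay of $J$ as $t\to+\infty$. The key analytic fact, and the step I expect to be the main obstacle, is the classical statement that in the absence of conjugate points a nontrivial exponentially stable (Green--Busemann) Jacobi field cannot vanish: such a $J$ is the limit of normal Jacobi fields $J_T$ with $J_T(0)=J(0)$ and $J_T(T)=0$, a limit which exists and is parametrized injectively by $J(0)\in v^\perp$ precisely because $\gamma$ has no conjugate points. This gives $J(0)\neq 0$, i.e.\ $d\pi(v)w\neq 0$, and is the fact alluded to in the paper's acknowledgements, extractable from the standard theory of Green bundles on manifolds without conjugate points.

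For the final ``in particular'' assertion, let $z\in\Ws(\dot\gamma)\setminus\dot\gamma$. By definition of the stable manifold, $\phi_s(z)\to\dot\gamma$ as $s\to+\infty$, so there exists $t\in\R$ with $\phi_s(z)\in V$ for all $s\geq t$. The arc $s\mapsto\phi_s(z)$ for $s\in[t,+\infty)$, closed off by the limiting piece of $\dot\gamma$ on which it accumulates, is a continuous path in $V\cap\Ws(\dot\gamma)$ from $\phi_t(z)$ to $\dot\gamma$, so $\phi_s(z)\in W$ for every $s\geq t$. Since distinct orbits of the geodesic flow are disjoint, $\phi_s(z)\notin\dot\gamma$ for all $s\in\R$, and the injectivity of $\pi|_W$ proved above yields $\zeta(s)=\pi(\phi_s(z))\notin\pi(\dot\gamma)=\gamma$ for all $s\geq t$, completing the proof.
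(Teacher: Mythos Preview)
Your proof is correct and shares the paper's overall architecture: reduce to the transversality $\Es(v)\cap\ker(d\pi(v))=\{0\}$ along $\dot\gamma$, invoke the inverse function theorem along the compact orbit, and deduce the final assertion from the injectivity of $\pi|_W$. The difference lies in how you establish this transversality. You appeal to the Green--Busemann theory of stable Jacobi fields along geodesics without conjugate points, which works but requires identifying the Floquet-stable line with the Green-stable line and importing the limit construction $J_T\to J^s$. The paper instead exploits periodicity directly: if $0\neq w\in\Es(\dot\gamma(0))\cap\ker(d\pi(\dot\gamma(0)))$, then the eigenvector relation $d\phi_\ell(\dot\gamma(0))w=\sigma w$ forces the associated Jacobi field to vanish at both $t=0$ and $t=\ell$, contradicting the absence of conjugate points. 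This two-line argument bypasses the Green-bundle machinery entirely. Your route is the natural one for a non-periodic geodesic without conjugate points and is perhaps what the acknowledgement alludes to; in the closed-orbit setting here it is correct but heavier than needed.
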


\begin{Remark}
Since $\Wu(\dot\gamma)=-\Ws(-\dot\gamma)$, the analogous statement holds for the unstable manifold.\hfill\qed
\end{Remark}

\begin{proof}[Proof of Lemma~\ref{l:Ws_waist}]
Since $\gamma$ is hyperbolic, the closed orbit $\dot\gamma$ has a Floquet multiplier $\sigma\in(-1,1)$. We parametrize $\gamma$ with unit speed, and denote by  $\ell>0$ its length, so that $\dot\gamma(t)=\dot\gamma(t+\ell)$. We consider the stable line bundle $\Es$ over $\dot\gamma$, which is given by
\begin{align*}
\Es(\dot\gamma(t))=\ker\big(d\phi_\ell(\dot\gamma(t))-\sigma\,\id\big).
\end{align*}
The stable bundle is invariant under the linearized geodesic flow, i.e.\ 
\[
d\phi_t(\dot\gamma(0))\Es(\dot\gamma(0))=\Es(\dot\gamma(t)).
\]
If we had $\Es(\dot\gamma(t))=\ker(d\pi(\dot\gamma(t)))$ for some $t\in\R/\ell\Z$, the endpoints of the geodesic segment $\gamma|_{[t,t+\ell]}$ would be conjugate.
Therefore, since $\gamma$ has no conjugate points, we infer
\begin{align*}
 \Es(\dot\gamma(t))\cap\ker(d\pi(\dot\gamma(t)))=\{0\},\qquad\forall t\in\R/\ell\Z.
\end{align*}
This, together with the fact that 
\[
T_{\dot\gamma(t)}\Ws(\dot\gamma)=\mathrm{span}\{\dot\gamma(t)\}\oplus\Es(\dot\gamma(t)),\qquad\forall t\in\R/\ell\Z,
\]
readily implies that the local stable manifold of $\dot\gamma$ is a graph over the base manifold $M$. Namely, for any sufficiently small neighborhood $V\subset SM$ of the closed orbit $\dot\gamma$, if $W\subset V\cap\Ws(\dot\gamma)$ is the path-connected component containing $\dot\gamma$, the base projection $\pi|_W:W\to M$ is a diffeomorphism onto a neighborhood of $\gamma$. For each $z\in \Ws(\dot\gamma)\setminus\dot\gamma$, for all $t>0$ large enough we have $\phi_t(z)\in W\setminus\dot\gamma$, and therefore $\pi\circ\phi_t(z)\in\pi(W)\setminus\gamma$.
\end{proof}

We recall the following elementary property of geodesics with conjugate points on surfaces (see for instance \cite[Lemma~5.9]{De-Philippis:2022wz} for a proof).

\begin{Lemma}\label{l:intersecting_conj_pts}
Let $(M,g)$ be a Riemannian surface, and $\gamma:[-T,T]\to M$ a geodesic arc such that, for some $[t_1,t_2]\subset(-T,T)$, the points $\gamma(t_1)$ and $\gamma(t_2)$ are conjugate along $\gamma|_{[t_1,t_2]}$. There exists an open neighborhood $V\subset SM$ of $\dot\gamma(0)$ such that, for each $z\in V$, the geodesic $\zeta(t):=\pi\circ\phi_t(z)$ intersects $\gamma$ for some $t\in[-T,T]$.\hfill\qed
\end{Lemma}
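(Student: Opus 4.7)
The plan is to argue by contradiction, using Fermi coordinates along $\gamma$, a rescaling argument, and Sturm's separation theorem for the Jacobi equation. Suppose towards a contradiction that there exists a sequence $z_n\to\dot\gamma(0)$ in $SM$ whose geodesics $\zeta_n(t):=\pi\circ\phi_t(z_n)$ satisfy $\zeta_n([-T,T])\cap\gamma([-T,T])=\varnothing$. Fix Fermi coordinates $(s,u)$ on a tubular neighborhood $\mathcal{T}$ of $\gamma([-T,T])$ in which $\gamma$ corresponds to $\{u=0\}$; since $\zeta_n\to\gamma$ in $C^1([-T,T])$, for $n$ large enough $\zeta_n([-T,T])\subset\mathcal{T}$ and we can write $\zeta_n(t)=(S_n(t),U_n(t))$. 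Because $U_n$ is continuous and nowhere zero on the connected interval $[-T,T]$, it has constant sign, and up to extracting a subsequence and possibly applying the involution $u\mapsto -u$ we may assume $U_n>0$ throughout $[-T,T]$.

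Next I would rescale: set $W_n(t):=U_n(t)/\|U_n\|_{C^0([-T,T])}$, so that $W_n>0$ and $\|W_n\|_{C^0}=1$. In Fermi coordinates on a surface the metric takes the form $G(s,u)^2\,ds^2+du^2$ with $G(s,0)=1$, $\partial_u G(s,0)=0$, and $\partial_u^2 G(s,0)=-K(s,0)$, where $K$ is the Gaussian curvature. Writing the Euler--Lagrange equations for $(S_n,U_n)$ and using that $\dot S_n\to 1$ and $U_n\to 0$ uniformly on $[-T,T]$, one checks that $W_n$ satisfies
\[
 W_n''(t)+K(S_n(t),0)\,W_n(t)=\varepsilon_n(t),
\]
with $\|\varepsilon_n\|_{C^0}\to 0$. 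Combined with $\|W_n\|_{C^0}=1$, this bounds $\|W_n''\|_{C^0}$ uniformly, so Arzel\`a--Ascoli yields, along a further subsequence, a $C^1$-limit $W_\infty$ on $[-T,T]$ that is nonnegative, satisfies $\|W_\infty\|_{C^0}=1$, and solves the Jacobi equation $W_\infty''+(K\circ\gamma)\,W_\infty=0$.

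The final step is to derive the contradiction via Sturm separation. The conjugate-points hypothesis provides a nonzero perpendicular Jacobi field $J$ along $\gamma|_{[t_1,t_2]}$ with $J(t_1)=J(t_2)=0$. The limit $W_\infty$ cannot be a scalar multiple of $J$: a nonnegative Jacobi field vanishing at $t_1$ would satisfy $W_\infty'(t_1)=0$ as well, and uniqueness of solutions to the linear ODE $y''+(K\circ\gamma)\,y=0$ would then force $W_\infty\equiv 0$, contradicting $\|W_\infty\|_{C^0}=1$. Hence $J$ and $W_\infty$ are linearly independent solutions of the Jacobi equation, and Sturm's separation theorem produces a zero $t_*\in(t_1,t_2)$ of $W_\infty$. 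But $W_\infty\geq 0$ forces $t_*$ to be an interior minimum, so $W_\infty'(t_*)=0$, and uniqueness once again yields $W_\infty\equiv 0$, the desired contradiction.

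The main technical obstacle is the derivation of the rescaled equation for $W_n$ and the verification that $\|\varepsilon_n\|_{C^0}\to 0$; this amounts to propagating the convergence $z_n\to\dot\gamma(0)$ into uniform $C^1$-bounds for $(S_n,U_n)$ on the compact interval $[-T,T]$ and then carefully tracking the nonlinear $O(U_n^2)$ and $O(U_n)(\dot S_n-1)$ corrections to the leading Jacobi-type term in the geodesic equation expressed in Fermi coordinates.
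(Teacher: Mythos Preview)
Your argument is correct. Note, however, that the paper does not actually give its own proof of this lemma: the statement is marked with \qed\ and a reference to \cite[Lemma~5.9]{De-Philippis:2022wz}, so there is no in-paper proof to compare against. Your approach---Fermi coordinates along $\gamma$, rescaling the transverse component to produce a nonnegative limiting solution of the Jacobi equation, and then invoking Sturm separation against the Jacobi field vanishing at $t_1,t_2$---is the standard route to this classical fact, and it is carried out soundly. In particular, the bound on $\|W_n''\|_{C^0}$ (hence on $\|W_n'\|_{C^0}$ via the mean value theorem on the compact interval) justifies the Arzel\`a--Ascoli step, and the final contradiction from $W_\infty\geq 0$ having an interior zero is clean.

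One small point worth tightening: you claim $U_n>0$ on all of $[-T,T]$, but $U_n(t_0)=0$ with $S_n(t_0)\notin[-T,T]$ would not actually force $\zeta_n(t_0)\in\gamma([-T,T])$. Since $[t_1,t_2]\subset(-T,T)$ strictly and $S_n\to\mathrm{id}$ uniformly, it suffices to run the argument on a slightly smaller interval $[-T',T']$ with $[t_1,t_2]\subset(-T',T')$ and $S_n([-T',T'])\subset[-T,T]$ for large $n$; nothing else changes. You may also want to remark that when $\gamma$ is merely immersed, the Fermi chart is only a local diffeomorphism from the strip, but $\zeta_n$ can still be lifted to it for $n$ large by $C^1$-closeness.
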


This lemma has the following immediate consequence for simple closed geodesics with conjugate points.

\begin{Cor}
\label{c:intersecting}
Let $(M,g)$ be an orientable Riemannian surface, and $\gamma$ a simple closed geodesic with conjugate points. 
\begin{itemize}
\item[(i)] There exists $T>0$ and an open neighborhood $V\subset SM$ of the lift $\dot\gamma$ such that, for each $z\in V$, the geodesic $\zeta(t):=\pi\circ\phi_t(z)$ intersects $\gamma$ for some positive time $t\in(0,T]$ and for some negative time $t\in[-T,0)$.\vspace{5pt}

\item[(ii)] There exists $T>0$ and an open neighborhood $U\subset M$ of $\gamma$ such that, for each $z\in SU$, the geodesic $\zeta(t):=\pi\circ\phi_t(z)$ intersects $\gamma$ for some $t\in[-T,T]$.
\end{itemize}
\hfill\qed
\end{Cor}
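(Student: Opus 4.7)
My plan for part (i) is to exploit both the conclusion of Lemma~\ref{l:intersecting_conj_pts} and the fact that, along a closed geodesic, conjugate parameters are periodic with period $\ell$, so one can freely arrange a conjugate pair to lie strictly in the future, or strictly in the past, of any prescribed base point. The difficulty is that Lemma~\ref{l:intersecting_conj_pts} only guarantees some intersection time in a two-sided interval $[-T_0,T_0]$ around the base point, and does not track the sign. I circumvent this by a shift-and-pullback trick: parametrize $\gamma$ with unit speed, pick a conjugate pair $0<a<b$ in the future (available by periodicity), set $c^+:=(a+b)/2$ and $T_0:=(b-a)/2+\delta$ with $\delta\in(0,a)$ small, and apply Lemma~\ref{l:intersecting_conj_pts} to the reparametrized arc $s\mapsto\gamma(s+c^+)$ on $[-T_0,T_0]$, whose conjugate pair $[-(b-a)/2,(b-a)/2]$ sits strictly inside. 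This yields an open neighborhood of $\dot\gamma(c^+)$; its image under $\phi_{-c^+}$ is a neighborhood $V^+$ of $\dot\gamma(0)$ on which every geodesic meets $\gamma$ at a time in $[c^+-T_0,c^++T_0]\subset(0,\infty)$. A symmetric construction with a conjugate pair $a^-<b^-<0$ in the past yields a neighborhood $V^-$ with negative intersection time, and $V^+\cap V^-$ has the desired two-sided intersection property. By compactness of the closed orbit $\dot\gamma\subset SM$, a finite cover by such neighborhoods at various base points along $\dot\gamma$, together with the maximum of the associated times, gives the uniform $T$ and the neighborhood $V$ of $\dot\gamma$ claimed in part (i).

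For part (ii), I reduce to part (i) plus a transversality argument on the compact torus $S\gamma:=\pi^{-1}(\gamma)$. Every $z_0\in S\gamma$ falls into one of two cases. Either $z_0$ is tangent to $\gamma$, so $z_0=\pm\dot\gamma(s_0)$, in which case part (i) directly supplies an open neighborhood of $z_0$ in $SM$ with a time bound; or $z_0$ is transverse to $\gamma$, in which case the geodesic from $z_0$ crosses $\gamma$ transversally at $t=0$, and the implicit function theorem provides an open neighborhood of $z_0$ on which all geodesics cross $\gamma$ transversally at a time close to $0$. By compactness of $S\gamma$ I extract a finite subcover, obtaining an open $V\subset SM$ containing $S\gamma$ and a uniform $T>0$ such that every geodesic issued from $z\in V$ meets $\gamma$ at some time in $[-T,T]$. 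Finally I use that $\pi:SM\to M$ is a closed map (its fibers are circles, hence $\pi$ is proper) to conclude that $U:=M\setminus\pi(SM\setminus V)$ is an open neighborhood of $\gamma$ with $\pi^{-1}(U)\subset V$, completing part (ii).

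The main obstacle is the sign-separation in part (i): Lemma~\ref{l:intersecting_conj_pts} is symmetric in time, so it cannot be applied directly at the base point $\dot\gamma(0)$ to obtain separately a positive and a negative intersection time. The shift-and-pullback maneuver, combined with the $\ell$-periodicity of conjugate parameters along the closed geodesic, is the essential device that transfers the sign of the conjugate-pair location into the sign of the intersection time.
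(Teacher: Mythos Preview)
Your argument is correct. The paper itself omits the proof entirely (the corollary is marked with \qed\ immediately after the statement), treating both parts as immediate consequences of Lemma~\ref{l:intersecting_conj_pts}; your shift-and-pullback for part~(i) and the compactness argument on $\pi^{-1}(\gamma)$ for part~(ii) are precisely the natural way to spell out those immediate consequences. Two minor remarks: in part~(ii), when $z_0=-\dot\gamma(s_0)$ you are implicitly applying part~(i) to the reversed geodesic $\bar\gamma(t)=\gamma(-t)$, which has the same image and the same conjugate pairs, so this is fine; and in part~(ii) you could bypass part~(i) and invoke Lemma~\ref{l:intersecting_conj_pts} directly for the tangent case, since no sign separation is required there --- but using the stronger part~(i) is harmless.
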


\section{Contractible simple closed geodesics on surfaces}
\label{s:contractible_scg}

Let $(M,g)$ be a closed Riemannian surface with geodesic flow $\phi_t:SM\to SM$. If $K\subset SM$ is a hyperbolic compact invariant subset for the geodesic flow, its stable manifold is defined by
\begin{align*}
 \Ws(K)=\big\{ z\in SM\ \big|\ \omega(z)\subseteq K \big\},
\end{align*}
where $\omega(z)$ denotes the $\omega$-limit of $z$. The unstable manifold $\Wu(K)$ is defined analogously by employing the $\alpha$-limit instead of the $\omega$-limit, or equivalently
\begin{align*}
 \Wu(K)=-\Ws(-K).
\end{align*}
In this paper, we will consider invariant compact sets $K$ of the form
\begin{align*}
K=\bigcup_{i=1}^n (\dot\gamma_i\cup-\dot\gamma_i), 
\end{align*}
where $\gamma_1,...,\gamma_n$ are hyperbolic closed geodesics of $(M,g)$. In this case, the stable and unstable manifolds of $K$ decompose as the disjoint unions of the stable and unstable manifolds of the closed orbits $\pm\dot\gamma_i$, i.e.
\begin{align*}
\Ws(K)=\bigcup_{i=1}^n \big(\Ws(\dot\gamma_i)\cup\Ws(-\dot\gamma_i)\big),
\qquad
 \Wu(K)=\bigcup_{i=1}^n \big(\Wu(\dot\gamma_i)\cup\Wu(-\dot\gamma_i)\big).
\end{align*}

Let $V\subset SM$ be an open subset. We define the \emph{forward trapped set} $\trap_+(V)$ and the \emph{backward trapped set} $\trap_-(V)$ by 
\begin{align*}
 \trap_\pm(V) = \big\{z\in SM\ \big|\ \phi_{\pm t}(z)\in V\mbox{ for all $t>0$ large enough}\big\}.
\end{align*}
Notice that
\begin{align}
\label{e:trap+-}
 \trap_-(V)=-\trap_+(-V).
\end{align}

In Section~\ref{ss:curve_shortening} we introduced the notion of weak convexity for an open subset of a closed orientable Riemannian surface $(M,g)$. In this section, we will consider open disks $B\subset M$ satisfying the following, stronger, convexity assumption.

\begin{Assumption}
\label{a:convexity}
The open disk $B\subset M$ is weakly convex, and there exist $\delta>0$, $T>0$, and an open neighborhood $N\subset\overline B$ of $\partial B$ such that every smooth curve $\gamma:[-T,T]\to M$ parametrized with unit speed $\|\dot\gamma\|_g\equiv1$, with curvature bound $\|k_\gamma\|_{L^\infty}\leq\delta$, and such that $\gamma(0)\in N$, is not entirely contained in $B$.\hfill\qed
\end{Assumption}

\begin{figure}
\includegraphics{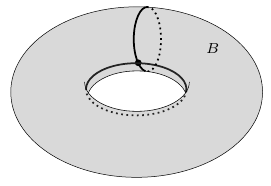}
\caption{An open convex geodesic polygon $B$ that is the complement of two simple closed geodesics of a 2-torus of revolution.}
\label{f:torus}
\end{figure}

\begin{Example}\label{ex:polygon}
Assumption~\ref{a:convexity} is satisfied when $B\subset M$ is a \emph{convex geodesic polygon}, meaning an open disk whose boundary $\partial B$, seen as a piecewise smooth immersed submanifold of $M$, is a piecewise geodesic circle with at least one corner, and all the inner angles at the corners of $\partial B$ are less than $\pi$. 
This definition allows $B$ to be a fundamental domain of $M$ defined as the complement of a suitable collection of finitely many simple closed geodesics (Figure~\ref{f:torus}).\hfill\qed
\end{Example}

\begin{Example}
\label{ex:conjugate_points}
Corollary~\ref{c:intersecting}(ii) readily implies that Assumption~\ref{a:convexity} is satisfied when $B\subset M$ is an open disk whose boundary is a simple closed geodesic with conjugate points.\hfill\qed
\end{Example}

The following two theorems are the main results of this section.

\begin{Thm}\label{t:scg1}
Let $(M,g)$ be a closed orientable Riemannian surface, and $B\subset M$ an open disk satisfying Assumption~\ref{a:convexity}. If $B$ does not contain any simple closed geodesic without conjugate points, then $\trap_+(SB)=\trap_-(SB)=\varnothing$.
\end{Thm}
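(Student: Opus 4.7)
The plan is by contradiction. Using $-SB=SB$ together with~\eqref{e:trap+-}, it suffices to prove $\trap_+(SB)=\varnothing$. Suppose some $z\in\trap_+(SB)$ exists; after a time-shift assume $\phi_{[0,\infty)}(z)\subset SB$. First I would invoke Assumption~\ref{a:convexity} to show the trapped orbit eventually avoids the boundary neighborhood $N$: if $\pi\circ\phi_t(z)\in N$ for arbitrarily large $t$, then the arc $s\mapsto\pi\circ\phi_{t+s}(z)$ on $[-T,T]$ has zero geodesic curvature, begins in $N$, and lies entirely in $B$, contradicting the assumption. Consequently the $\omega$-limit $K:=\omega(z)$ is a nonempty compact $\phi_t$-invariant subset of $S(B\setminus N)$, and $\pi(K)\subset B\setminus N$ is a compact subset of $B$ bounded away from $\partial B$.

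Next, I would let $V\subset B\setminus\pi(K)$ be the path-connected component containing the annular neighborhood $N\cap B$ of $\partial B$. By Lemma~\ref{l:weakly_locally_convex}, $V$ is weakly convex, and it is not simply connected: any smooth embedded circle $\zeta_0\subset N\cap B$ homotopic to $\partial B$ bounds in $B$ a sub-disk containing the nonempty set $\pi(K)$, hence is non-contractible in $V$. Applying Lemma~\ref{l:convergence_scg} to the connected component $\CC\subset\Emb(S^1,V)$ of such non-contractible loops yields a simple closed geodesic $\gamma\subset\overline V\subset\overline B$ of length $\ell:=\inf_{\zeta\in\CC}L(\zeta)$, together with an approximating sequence $\gamma_n\in\CC$ with $\gamma_n\to\gamma$ in $C^2$. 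My goal is to show that $\gamma$ is a simple closed geodesic contained in $B$ and without conjugate points, contradicting the hypothesis.

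To show $\gamma\cap\partial B=\varnothing$, I would parametrize the $\gamma_n$ by arclength; since $\gamma$ is a unit-speed geodesic and $\gamma_n\to\gamma$ in $C^2$, one has $\|k_{\gamma_n}\|_{L^\infty}\le\delta$ for all large $n$. If $\gamma_n$ had a point in $N$, applying Assumption~\ref{a:convexity} to (a suitably translated, and if needed periodically extended) segment $\gamma_n|_{[-T,T]}$ would force $\gamma_n$ to leave $B$, contradicting $\gamma_n\subset V\subset B$. Hence $\gamma_n\cap N=\varnothing$ eventually, and in the $C^0$-limit $\gamma\cap N=\varnothing$, so $\gamma\subset B$. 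Next, I would observe that $V$ must be adjacent to $\gamma$ from the outer (annular) side, since any loop in $\CC$ close to $\gamma$ on the inner side would bound a thin sub-disk disjoint from $\pi(K)$ and hence be contractible in $V$. If $\gamma$ had conjugate points, Lemma~\ref{l:Bangert} applied to a sufficiently small tubular neighborhood of $\gamma$ would yield a shorter embedded loop $\zeta$ on this outer side, homotopic to $\gamma$ there; such a $\zeta$ would belong to $\CC$ and produce the contradiction $L(\zeta)<\ell$. Thus $\gamma\subset B$ would be a simple closed geodesic without conjugate points, contradicting the hypothesis. The main technical obstacle is guaranteeing that the shorter loop $\zeta$ produced by Lemma~\ref{l:Bangert} genuinely lies in $V$, rather than straying into a different component of $B\setminus\pi(K)$. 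When $\gamma\cap\pi(K)=\varnothing$ one simply shrinks the tubular neighborhood; when $\gamma$ meets $\pi(K)$, the invariance of $K$ combined with $\gamma\subset\overline V$ forces $\dot\gamma\subset K$, since a transverse crossing of $\pi(K)$ by $\gamma$ would take $\gamma$ into a distinct component of $B\setminus\pi(K)$, and the tangential case is handled by a further shrinking of the outer tubular half.
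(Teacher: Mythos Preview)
Your argument follows exactly the paper's route: reduce to $\trap_+(SB)$, use Assumption~\ref{a:convexity} to confine $\pi(\omega(z))$ away from $N$, take the weakly convex component $V$ of $B\setminus\pi(\omega(z))$ containing $N$, and apply Lemma~\ref{l:convergence_scg} and then Lemma~\ref{l:Bangert}. The paper simply asserts the last implication (``Lemma~\ref{l:Bangert} and the infimum identity imply $\gamma$ has no conjugate points''), while you try to unpack it and correctly spot the subtlety that the shorter loop $\zeta$ from Lemma~\ref{l:Bangert} must land in the same component $\CC$.

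Two points need fixing in your unpacking. First, the ``thin sub-disk'' reasoning for why the approximants $\gamma_n$ lie on the outer side of $\gamma$ is not right: a loop just inside $\gamma$ bounds a disk that is nearly all of $B_\gamma$, not a thin one, and it need not miss $\pi(K)$. The correct (and shorter) argument in the case $\gamma\cap\pi(K)\neq\varnothing$ is the one you sketch afterwards: a transverse crossing of $\gamma$ with a geodesic in $\pi(K)$ would persist for $\gamma_n$ and contradict $\gamma_n\subset V$, so $\gamma\subset\pi(K)$; then $\gamma$ separates $B$, and since $V$ is connected and contains $N$, one gets $V\subset A_\gamma$.

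Second, the ``further shrinking of the outer tubular half'' is the real gap: you must rule out that $\pi(K)\setminus\gamma$ accumulates on $\gamma$ from the $A_\gamma$-side. This is exactly where the hypothesis ``$\gamma$ has conjugate points'' (which you are assuming for contradiction) does the work, via Corollary~\ref{c:intersecting}. If $p_n\in\pi(K)\cap A_\gamma$ with $p_n\to\gamma$, pick $v_n\in K$ over $p_n$; a subsequence converges to some $v\in K$ with $\pi(v)\in\gamma$, and the transversality argument forces the geodesic through $v$ to be $\gamma$, so $v\in\pm\dot\gamma$. Corollary~\ref{c:intersecting}(i) then forces the geodesic through $v_n$ (which lies in $\pi(K)$) to intersect $\gamma$; but that intersection must be tangential (again by the $\gamma_n$-argument), hence the geodesic equals $\gamma$, contradicting $p_n\notin\gamma$. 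So a small outer collar $W^+$ of $\gamma$ is contained in $V$, Lemma~\ref{l:Bangert} produces $\zeta\subset W^+$ with $L(\zeta)<L(\gamma)$, and since $\zeta$ and $\gamma_n$ cobound an annulus in $W^+\subset V$ you get $\zeta\in\CC$, the desired contradiction.
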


\begin{proof}
Equation~\eqref{e:trap+-} implies that 
$\trap_+(SB)=-\trap_-(SB)$. 
Let us assume that $\trap_+(SB)\neq\varnothing$, and consider an arbitrary point $z\in \trap_+(SB)$.  Assumption~\ref{a:convexity} implies
\begin{align*}
 \pi(\omega(z))\subset B\setminus N,\qquad\forall z\in\trap_+(SB).
\end{align*}
By Lemma~\ref{l:weakly_locally_convex}, the connected component $U\subset B\setminus\pi(\omega(z))$ containing $N\setminus\partial B$ is weakly convex. Let $\CC\subset\Emb(S^1,U)$ be a connected component containing loops that are non-contractible in $U$. By Lemma~\ref{l:convergence_scg}, there exists a sequence $\gamma_n\in\CC$ that converges in the $C^2$ topology to a simple closed geodesic $\gamma\subset \overline U$ of length
\begin{align}
\label{e:inf_waist_lemma1}
 L(\gamma)=\inf_{\zeta\in\CC} L(\zeta).
\end{align}
Notice that $\gamma_n\subset U\setminus N$ for all $n$ large enough, for otherwise $\gamma_n$ would intersect $\partial B$ according to Assumption~\ref{a:convexity}. Therefore $\gamma$ is contained in $B$. This, together with Lemma~\ref{l:Bangert} and Equation~\eqref{e:inf_waist_lemma1}, implies that $\gamma$ has no conjugate points.
\end{proof}

\begin{Thm}\label{t:scg2}
Let $(M,g)$ be a closed orientable Riemannian surface, and $B\subset M$ an open disk satisfying Assumption~\ref{a:convexity}, containing at least one closed geodesic, but no degenerate simple closed geodesics without conjugate points. Then there exists a finite even number of pairwise disjoint simple closed geodesics 
$\gamma_1,...,\gamma_{2k}\subset B$ satisfying the following properties:
\begin{itemize}

\item[(i)] $\gamma_{i+1}\subset B_{\gamma_i}$, where $B_{\gamma_i}\subset B$ are the open disks with boundary $\partial B_{\gamma_i}=\gamma_i$.\vspace{2pt}

\item[(ii)] For each $i$ odd, $\gamma_i$ is a waist.\vspace{2pt}

\item[(iii)] For each $i$ even, $\gamma_i$ has conjugate points.\vspace{2pt}

\item[(iv)] Let $U:=B\setminus(\gamma_1\cup...\cup\gamma_{2k})$. The trapped sets of $SU$ are given by
\begin{align*}
\qquad\quad
\trap_+(SU) = \Ws(K)\setminus K,
\qquad
\trap_-(SU) = \Wu(K)\setminus K,
\end{align*}
where
\begin{align*}
 K:=\bigcup_{i\ \mathrm{odd}} (\dot\gamma_i\cup-\dot\gamma_i).
\end{align*}

\item[(v)] No complete geodesic $($i.e.~a geodesic parametrized with constant speed and defined for all times$)$ is entirely contained in $U$.

\end{itemize}
\end{Thm}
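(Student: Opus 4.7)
Plan.

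The plan is to build the sequence $\gamma_1,\ldots,\gamma_{2k}$ inductively in pairs, locating an ``outermost waist'' at each odd step and a minmax conjugate-point geodesic at each even step. The contrapositive of Theorem~\ref{t:scg1} supplies the waists, Lemma~\ref{l:minmax} furnishes the intermediate minmax geodesics, Example~\ref{ex:conjugate_points} propagates Assumption~\ref{a:convexity} to the next inner disk, and the outermost-waist choice is precisely the mechanism that will force the intermediate geodesics to have conjugate points.

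For the base step, since $B$ contains a closed geodesic $\zeta$, the lift $\dot\zeta$ lies in $\trap_+(SB)\cap\trap_-(SB)$, so these trapped sets are non-empty. The contrapositive of Theorem~\ref{t:scg1} then yields a simple closed geodesic in $B$ without conjugate points, which must be a non-degenerate waist by the hypothesis forbidding degenerate simple closed geodesics without conjugate points (see Section~\ref{ss:waist}). I would pick $\gamma_1$ to be an \emph{outermost} waist of $B$, meaning that no other waist of $B$ lies in $B\setminus\overline{B_{\gamma_1}}$; existence follows from $C^2$-compactness combined with the isolation of non-degenerate waists. For the inductive step, assuming $\gamma_{2j-1}$ is a waist outermost in $B_{\gamma_{2j-2}}$ (with $B_{\gamma_0}:=B$), Lemma~\ref{l:minmax}(ii) guarantees a simple closed geodesic inside $B_{\gamma_{2j-1}}$. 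If $B_{\gamma_{2j-1}}$ contains a waist, I let $\gamma_{2j+1}$ be an outermost waist of $B_{\gamma_{2j-1}}$ and let $\gamma_{2j}$ be the non-contractible simple closed geodesic in the annulus $B_{\gamma_{2j-1}}\setminus\overline{B_{\gamma_{2j+1}}}$ produced by Lemma~\ref{l:minmax}(i). Because that annulus contains no waist by outermostness, $\gamma_{2j}$ cannot be a waist, and the hypothesis therefore forces $\gamma_{2j}$ to have conjugate points; by Example~\ref{ex:conjugate_points}, $B_{\gamma_{2j}}$ then satisfies Assumption~\ref{a:convexity}, so the induction iterates inside $B_{\gamma_{2j}}$. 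If instead $B_{\gamma_{2j-1}}$ contains no waist, I take $\gamma_{2j}$ to be any simple closed geodesic inside $B_{\gamma_{2j-1}}$ (automatically with conjugate points by non-degeneracy), set $2k:=2j$, and stop.

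In the terminating case, $B_{\gamma_{2k}}\subset B_{\gamma_{2k-1}}$ inherits the absence of waists, hence contains no simple closed geodesic without conjugate points, so Theorem~\ref{t:scg1} applied to $B_{\gamma_{2k}}$ (convex by Example~\ref{ex:conjugate_points}) gives empty trapped sets and hence no closed geodesic, establishing (v). Properties (i)--(iii) are built into the construction. For (iv) I would analyze each connected component of $U$---the outer collar $B\setminus\overline{B_{\gamma_1}}$, the intermediate annular regions, and the innermost disk $B_{\gamma_{2k}}$---noting that no such component contains a closed geodesic by the outermost and terminating choices; Theorem~\ref{t:scg1}-type reasoning applied locally then forces any trapped orbit to have its $\omega$-limit concentrated on the hyperbolic waists in $K$, and Lemma~\ref{l:Ws_waist} together with the hyperbolic splitting identifies the trapped sets with $\Ws(K)\setminus K$ and $\Wu(K)\setminus K$. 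The main obstacle is termination of the induction, i.e.\ finiteness of $k$: I expect it to follow from the isolation of simple closed geodesics under the hypotheses, using that non-degenerate waists are isolated via the implicit function theorem and that simple closed geodesics with conjugate points are isolated in $M$ as a consequence of Corollary~\ref{c:intersecting}(ii) (since a disjoint simple closed geodesic contained in the prescribed neighborhood would be forced to cross the reference geodesic). The delicate technical point is to combine these isolation statements with a $C^2$-compactness argument for nested simple closed geodesics in $\overline B$---which requires securing a length bound on the $\gamma_i$---to rule out an infinite descent $B\supsetneq B_{\gamma_2}\supsetneq B_{\gamma_4}\supsetneq\cdots$.
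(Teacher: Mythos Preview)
Your overall strategy---constructing the alternating sequence directly by picking an ``outermost waist'' at each odd step and then a minmax geodesic at each even step---is a reasonable variant of what the paper does, but it differs in a way that creates extra work and leaves two genuine gaps.

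The paper does not build the alternation constructively. Instead it takes a \emph{maximal} nested sequence $\gamma_1,\ldots,\gamma_n$ of pairwise disjoint simple closed geodesics in $B$ (finiteness being Lemma~\ref{l:V_finite}), identifies $\gamma_1$ with the geodesic of Lemma~\ref{l:gamma_0}, and then proves a posteriori that consecutive $\gamma_i,\gamma_{i+1}$ cannot both be waists (Lemma~\ref{l:minmax}(i)) and cannot both have conjugate points (Lemma~\ref{l:Bangert} plus Lemma~\ref{l:convergence_scg}), and that $\gamma_n$ is not a waist (Lemma~\ref{l:minmax}(ii)). Maximality is what makes part~(iv) clean: any simple closed geodesic sitting non-contractibly in an annular component of $U$ would extend the sequence, so none exists, and the $\omega$-limit argument goes through. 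In your constructive approach you must separately verify that each annulus $B_{\gamma_i}\setminus\overline{B_{\gamma_{i+1}}}$ contains no simple closed geodesic; your outermostness only excludes \emph{waists} from these regions, and you would still need a Lemma~\ref{l:Bangert}/Lemma~\ref{l:convergence_scg} argument to exclude conjugate-point geodesics as well.

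The first real gap is your existence claim for an ``outermost waist''. Compactness plus isolation of non-degenerate waists does \emph{not} by itself produce a waist whose filling contains every other waist: a priori there could be several maximal waists with pairwise disjoint fillings. What rules this out is precisely the curve-shortening argument of Lemma~\ref{l:gamma_0}, which produces a simple closed geodesic without conjugate points whose filling contains \emph{every} closed geodesic of $B$. You need that lemma (or its proof) at each odd step, and to apply it inside $B_{\gamma_{2j-1}}$ you must also argue that the limiting geodesic is interior rather than $\gamma_{2j-1}$ itself, since a disk bounded by a waist does not satisfy Assumption~\ref{a:convexity}.

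The second gap, which you correctly flag as the main obstacle, is termination. The paper's finiteness argument (Lemma~\ref{l:V_finite}) is not just ``isolation plus compactness'': the missing ingredient is a uniform length bound on the $\gamma_i$, and this is obtained from the non-obvious geometric estimate of Lemma~\ref{l:annulus}, which compares the lengths of two simple closed geodesics bounding a thin annulus. Without something of that strength, an infinite nested descent cannot be ruled out.
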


We shall prove Theorem~\ref{t:scg2} after some preliminaries.

\begin{Lemma}
\label{l:annulus}
Let $(M,g)$ be a closed oriented Riemannian surface. There exists a constant $a_0>0$ such that, for each embedded compact annulus $A\subset M$ with $\area(A,g)\leq a_0$ and whose boundary $\partial A$ is the union of two simple closed geodesics $\gamma_1,\gamma_2$, we have 
\[
\tfrac29 L(\gamma_1)\leq
L(\gamma_2)\leq \tfrac92 L(\gamma_1).\]
\end{Lemma}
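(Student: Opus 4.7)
The plan is to introduce Fermi coordinates around $\gamma_1$, show that $\gamma_2$ is a graph over $\gamma_1$ in these coordinates, and then directly compare lengths.

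First, I would fix a universal Fermi radius depending only on $(M,g)$: by compactness of $M$ and the boundedness of the sectional curvature, one can pick $R_0 \in (0, \inj(M,g)/4]$ such that, for every unit-speed geodesic $\gamma$ in $M$, the Fermi normal exponential map $F_\gamma(\tau,s) := \exp_{\gamma(\tau)}(s\nu_\gamma(\tau))$ is a local diffeomorphism on $\{|s|\leq R_0\}$ without conjugate points, whose pulled-back metric $J(\tau,s)^2\,d\tau^2 + ds^2$ has Jacobi factor satisfying $\tfrac{4}{5} \leq J(\tau,s) \leq \tfrac{5}{4}$. Together with the bound $R_0 \leq \inj(M,g)/4$, this also forces $F_\gamma$ to be injective on $[0,L(\gamma)] \times [-R_0,R_0]$.

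Second, I would choose $a_0 > 0$ small enough that, for every $p \in M$ and every unit-speed geodesic arc $\eta$ through $p$, each of the two components of $B_M(p,R_0/8)\setminus \eta$ has area strictly greater than $a_0$ (possible by compactness). The key observation is that if $\area(A,g) \leq a_0$, then every $q \in \gamma_2$ satisfies $d_M(q,\gamma_1) < R_0/8$, and symmetrically. Otherwise, the ball $B_M(q,R_0/8)$ would be disjoint from $\gamma_1$, the geodesic $\gamma_2$ through $q$ would split a neighborhood of $q$ into two pieces with one lying in $A$, and this piece would already have area strictly greater than $a_0$, contradicting the area bound.

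From this pointwise closeness, $\gamma_2$ is contained in the injective Fermi chart around $\gamma_1$ of half-width $R_0/8$, and a transversality argument---if $\gamma_2$ had a tangent perpendicular to $\gamma_1$ somewhere, then, being itself a geodesic, it would travel in the pure $s$-direction for a distance comparable to $R_0$ (by the geodesic ODE with bounded coefficients), exceeding the $R_0/8$ bound just established---shows that $\gamma_2$ is the graph of a smooth function $w:[0,L(\gamma_1)] \to [-R_0/8, R_0/8]$ in Fermi coordinates. Consequently,
\begin{align*}
L(\gamma_2)
&= \int_0^{L(\gamma_1)} \sqrt{J(\tau,w(\tau))^2 + w'(\tau)^2}\,d\tau \\
&\geq \int_0^{L(\gamma_1)} J(\tau,w(\tau))\,d\tau \geq \tfrac{4}{5}\,L(\gamma_1),
\end{align*}
and running the symmetric argument with Fermi coordinates around $\gamma_2$ yields $L(\gamma_1) \geq \tfrac{4}{5}\,L(\gamma_2)$, together comfortably implying the stated ratio bound $\tfrac{2}{5} L(\gamma_1) \leq L(\gamma_2) \leq \tfrac{5}{2} L(\gamma_1)$. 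The main obstacle is the graph property of $\gamma_2$ in Fermi coordinates; it requires combining the pointwise closeness to $\gamma_1$ from the previous step with the geodesic ODE along $\gamma_2$ to rule out $\tau$-extrema of the $s$-coordinate, and this is the only step where the geodesic nature of $\gamma_2$ (rather than just being a closed curve in $A$) is essential.
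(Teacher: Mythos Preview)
Your approach via Fermi coordinates is different from the paper's, but it has a genuine gap at the very first step. The claim that one can choose a uniform $R_0 > 0$, depending only on $(M,g)$, so that the Fermi exponential map $F_\gamma$ is \emph{injective} on $(\R/L(\gamma)\Z) \times [-R_0, R_0]$ for every simple closed geodesic $\gamma$, is false. On the flat torus $\R^2/\Z^2$, the simple closed geodesic of slope $1/n$ has length $\sqrt{n^2+1}$, but the maximal half-width of an embedded tubular neighborhood is $\tfrac{1}{2\sqrt{n^2+1}}$, which tends to $0$ as $n \to \infty$; no choice $R_0 \le \inj(M,g)/4 = 1/8$ works uniformly. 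Without injectivity of $F_{\gamma_1}$, expressing $\gamma_2$ as a single-valued graph $s = w(\tau)$ is not well-posed, and your length comparison collapses. A related issue already affects the closeness step: the half-ball on the $A$-side of the arc of $\gamma_2$ through $q$ need not lie in $A$, because $\gamma_2$ may re-enter $B_M(q, R_0/8)$ as a second arc---exactly the phenomenon that spoils uniform tubular neighborhoods.

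The paper's proof confronts this directly rather than positing a global chart. It fixes a small $r>0$ and sets $a_0$ equal to the minimum area of the geodesic triangles $T(x,v,r)$ with sides of lengths $r$ and $r/2$ emanating from $x$. Starting from a closest pair $x_0 \in \gamma_1$, $y_0 \in \gamma_2$ (shown to satisfy $d_g(x_0,y_0)<r$), it follows the transverse intersections of $\gamma_2$ with the normal arcs $\zeta_t(s) = \exp_{\gamma_1(t)}(s\,J\dot\gamma_1(t))$, $s \in (0, r]$, to build a monotone map $\sigma:[0,t_0]\to\R$ with $\gamma_2(\sigma(t)) \in \zeta_t((0,r])$. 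The key point is that $t_0 = \ell_1$: if $\sigma$ stopped earlier, the triangle $T(\gamma_1(t_0), \dot\gamma_1(t_0), r)$ would embed in $A$, forcing $\area(A,g) > a_0$. A short trapping argument then gives $\sigma(\ell_1) = \ell_2$, and the length bound follows by chopping $\gamma_1$ into $k \ge 5$ arcs of length $<r$. In effect the paper \emph{proves} the graph property you wanted, with the area hypothesis doing the work that uniform Fermi injectivity cannot.
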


\begin{proof}
We denote by $J$ the complex structure of the oriented Riemannian surface $(M,g)$, i.e.\ for all tangent vectors $v\in S_xM$ we have that $v,Jv$ is an oriented orthonormal basis of $T_xM$. For each $r\in(0,\inj(M,g)/2)$, $x\in M$, and $v\in S_xM$, we denote by $T(v,r)$ the unique geodesic triangle with vertices $x,\exp_x(rJv),\exp_x(-\tfrac r2 v)$, see Figure~\ref{f:triangle}. We consider the continuous function
\begin{align*}
 a:(0,\inj(M,g)/4)\to(0,\infty),\qquad
 a(r)=\min_{v\in SM}\area(T(v,r),g).
\end{align*}
We fix $r\in(0,\inj(M,g)/4)$ to be small enough so that, for each $x\in M$ and $v\in S_xM$ with corresponding geodesic $\gamma_v(t):=\exp_{x}(tv)$, the smooth map
\begin{align}
\label{e:no_focal_points}
[0,r]\times[0,r]\to M,\qquad (t,s)\mapsto \exp_{\gamma_v(t)} (s J\dot\gamma_v(t)).
\end{align}
is an embedding.
The positive constant of the lemma will be $a_0:=a(r)$. 

\begin{figure}
\includegraphics{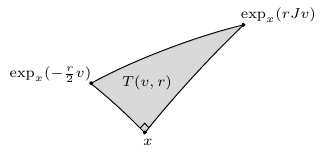}
\caption{The geodesic triangle $T(v,r)$.}
\label{f:triangle}
\end{figure}

Consider two simple closed geodesics $\gamma_1,\gamma_2$ bounding a compact annulus $A\subset M$ such that $\area(A,g)\leq a(r)$.
We choose two points $x_0\in\gamma_1$ and $y_0\in\gamma_2$ such that
\begin{align}
\label{e:closest_points}
 d_g(x_0,y_0)=\min \big\{ d_g(x,y)\ \big|\ x\in\gamma_1,\ y\in\gamma_2 \big\}.
\end{align}
We set $\ell_1:=L(\gamma_1)$, $\ell_2:=L(\gamma_2)$, and we consider the unit-speed parametrizations 
\[\gamma_1:\R/\ell_1\Z\hookrightarrow M,\qquad \gamma_2:\R/\ell_2\Z\hookrightarrow M\] 
such that $\gamma_1(0)=x_0$, $\gamma_2(0)=y_0$, $J\dot\gamma_1(0)$ points inside the annulus $A$, while $J\dot\gamma_2(0)$ points outside the annulus $A$. 
Notice that \[d_g(x_0,y_0)<r.\] Otherwise, we would have $T(\dot\gamma_1(0),r)\subset A$, which would give the contradiction 
\[ a(r)\geq\area(A,g)> \area(T(\dot\gamma_1(0),r))\geq  a(r). \]
For each $t\in\R/\ell_1\Z$, we consider the geodesic arc 
\[\zeta_t:[0,r]\to M,\qquad \zeta_t(s)=\exp_{\gamma_1(t)}(s\,J\dot\gamma_1(t)).\]
The geodesic $\gamma_2$ intersects $\zeta_0((0,r))$ at $y_0=\gamma_2(0)$. Notice that all intersections between $\gamma_2$ and the geodesic arcs $\zeta_t$ must be transverse (otherwise some $\zeta_t$ would be contained in $\gamma_2$, contradicting the fact that $\gamma_2$ and $\gamma_1$ are disjoint).
 Therefore, by the implicit function theorem, there exists a maximal time $t_0\in(0,\ell_1]$ and a smooth monotone increasing function $\sigma:[0,t_0]\to\R$ such that $\sigma(0)=0$ and, for all $t\in [0,t_0]$, $\gamma_2$ intersects $\zeta_t((0,r])$ transversely at $\gamma_2(\sigma(t))$.
We denote by $\rho:[0,t_0]\to(0,r]$ the smooth function such that $\zeta_t(\rho(t))=\gamma_2(\sigma(t))$ for all $t\in [0,t_0]$. Notice that 
\begin{align}
\label{e:intersection_zetat_gamma2}
 \zeta_t|_{[0,\rho(t))} \cap \gamma_2 = \varnothing,\qquad
 \forall t\in[0,t_0].
\end{align}

We claim that $t_0=\ell_1$. Indeed, if $t_0<\ell_1$,
we would have $\gamma_2(\sigma(t_0))=\zeta_{t_0}(r)$. Therefore $T(\dot\gamma_1(t_0),r)\subset A$ (Figure~\ref{f:triangle2}), which would give the contradiction
\begin{align*}
\area(A,g)>\area(T(\dot\gamma_1(t_0),r))\geq a(r)\geq\area(A,g).
\end{align*}

\begin{figure}
\includegraphics{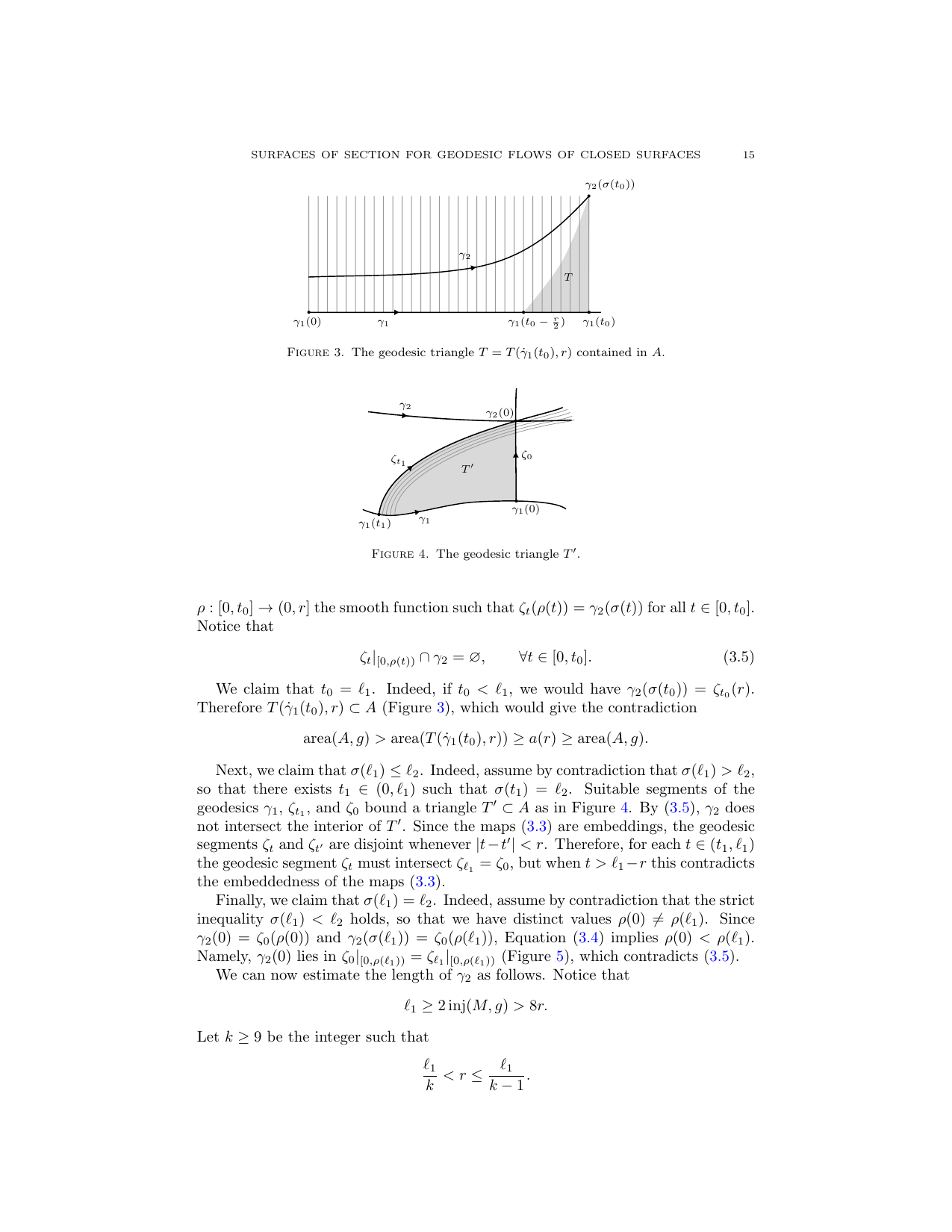}
\caption{The geodesic triangle $T=T(\dot\gamma_1(t_0),r)$ contained in $A$.}
\label{f:triangle2}
\end{figure}

Next, we claim that $\sigma(\ell_1)\leq\ell_2$. Indeed, assume by contradiction that $\sigma(\ell_1)>\ell_2$, so that there exists $t_1\in(0,\ell_1)$ such that $\sigma(t_1)=\ell_2$. Suitable segments of the geodesics $\gamma_1$, $\zeta_{t_1}$, and $\zeta_0$ bound a triangle $T'\subset A$ as in Figure~\ref{f:triangle3}. By~\eqref{e:intersection_zetat_gamma2}, $\gamma_2$ does not intersect the interior of $T'$.
Since the maps~\eqref{e:no_focal_points} are embeddings, the geodesic segments $\zeta_t$ and $\zeta_{t'}$ are disjoint whenever $|t-t'|<r$. Therefore, for each $t\in(t_1,\ell_1)$ the geodesic segment $\zeta_t$ must intersect $\zeta_{\ell_1}=\zeta_0$, but when $t>\ell_1-r$ this contradicts the embeddedness of the maps~\eqref{e:no_focal_points}.

\begin{figure}
\includegraphics{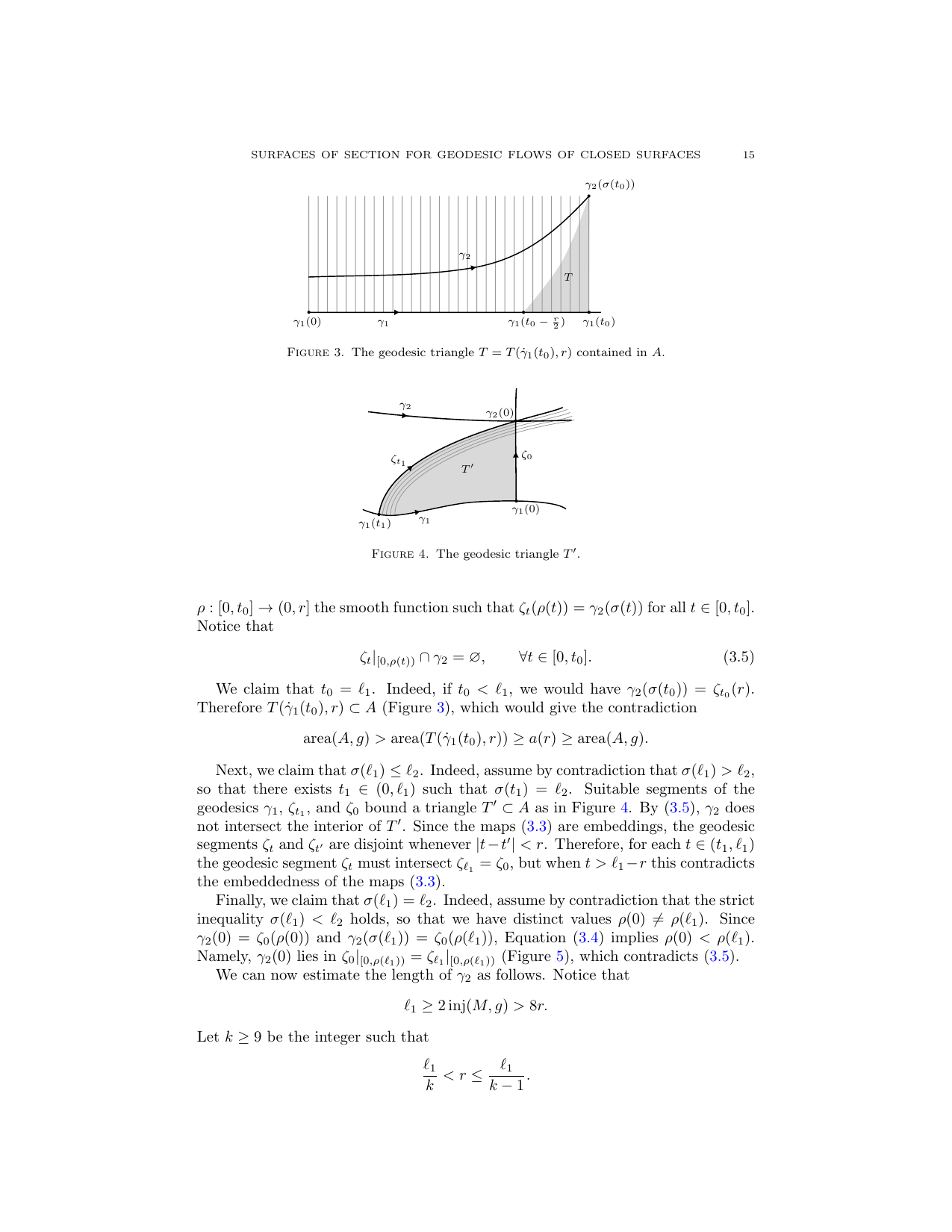}
\caption{The geodesic triangle $T'$.}
\label{f:triangle3}
\end{figure}

Finally, we claim that $\sigma(\ell_1)=\ell_2$. Indeed, assume by contradiction that the strict inequality $\sigma(\ell_1)<\ell_2$ holds, so that we have distinct values $\rho(0)\neq\rho(\ell_1)$. Since $\gamma_2(0)=\zeta_0(\rho(0))$ and $\gamma_2(\sigma(\ell_1))=\zeta_0(\rho(\ell_1))$, Equation~\eqref{e:closest_points} implies $\rho(0)<\rho(\ell_1)$. Namely, $\gamma_2(0)$ lies in $\zeta_{0}|_{[0,\rho(\ell_1))}=\zeta_{\ell_1}|_{[0,\rho(\ell_1))}$ (Figure~\ref{f:trapped}), which contradicts~\eqref{e:intersection_zetat_gamma2}.

\begin{figure}
\includegraphics{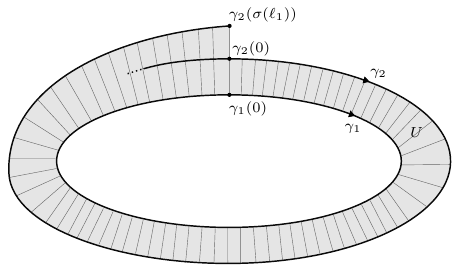}
\caption{The point $\gamma_2(0)$ intersecting the geodesic arc $\zeta_{\ell_1}|_{[0,\rho(\ell_1))}$.}
\label{f:trapped}
\end{figure}

We can now estimate the length of $\gamma_2$ as follows. Notice that 
\[\ell_1\geq2\,\inj(M,g)>8r.\] 
Let $k\geq9$ be the integer such that 
\[
\frac{\ell_1}{k}<r\leq\frac{\ell_1}{k-1}.
\]
Notice that each restriction $\gamma_2|_{[\sigma(t),\sigma(t+\ell_1/k)]}$ is contained in the open Riemannian ball $B_g(\gamma_1(t),2r)$ which has diameter less than $\inj(M,g)$. Therefore
\begin{align*}
 L(\gamma_2|_{[\sigma(t),\sigma(t+\ell_1/k)]})
 =
 d_g(\gamma_2(\sigma(t)),\gamma_2(\sigma(t+\ell_1/k)))
 <
 4r,
 \qquad\forall t\in[0,\ell_1].
\end{align*}
We infer
\begin{align*}
\ell_2 
&=
\sum_{h=0}^{k-1} d_g\big(\gamma_2(\sigma(\ell_1 \tfrac hk)),\gamma_2(\sigma(\ell_1\tfrac{h+1}k))\big)
<4rk
<4\ell_1\tfrac{k}{k-1}
\leq
\tfrac92 \ell_1.
\end{align*}
Switching the roles of $\gamma_1$ and $\gamma_2$, we obtain the other inequality $\ell_1\leq \tfrac92\ell_2$.
\end{proof}

We now consider an open disk $B\subset M$ satisfying Assumption~\ref{a:convexity}.  
 For any smooth embedded circle $\gamma \subset B$, we denote by $B_\gamma\subset B$ its filling, namely the open disk with boundary $\partial B_\gamma=\gamma$.

\begin{Lemma}
\label{l:gamma_0}
If $B$ contains closed geodesics, then it contains a simple closed geodesic $\gamma$ without conjugate points and whose filling $B_{\gamma}$ contains any other closed geodesic in $B$.
\end{Lemma}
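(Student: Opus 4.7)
My plan is to obtain $\gamma$ as a length minimizer in the class
\[
\CC := \big\{\eta\in\Emb(S^1,B) \,:\, \text{every closed geodesic of } B \text{ is contained in } \overline{B_\eta}\big\},
\]
and then rule out conjugate points via Bangert's Lemma~\ref{l:Bangert}. First I check that $\CC$ is nonempty and that $\ell^* := \inf_{\eta\in\CC} L(\eta) > 0$: Assumption~\ref{a:convexity} forces every closed geodesic of $B$ to avoid the collar $N$ (restricting such a geodesic to a unit-speed interval of length $2T$ centered at a hypothetical entry point into $N$ would contradict Assumption~\ref{a:convexity}), so a small smooth perturbation of $\partial B$ lying inside $N$ belongs to $\CC$; moreover, for any $\eta\in\CC$, the filling $B_\eta$ contains a closed geodesic $\mu$ whose diameter in $M$ is at least $\inj(M,g)$ (a closed geodesic inside a ball of radius less than $\inj(M,g)$ would lift to a closed straight line in the exponential chart), giving $L(\eta)\geq\diam(B_\eta)\geq\inj(M,g)$.

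Second, I upgrade the minimization over $\CC$ to a minimization over
\[
\GG := \CC\cap\{\text{simple closed geodesics in } B\}
\]
via Grayson's curve shortening flow $\Psi_s$. Weak convexity of $B$ together with property~(vi) implies $\Psi_s(\eta)\subset B$ throughout the flow's interval of existence, while the standard maximum-principle barrier argument keeps $\Psi_s(\eta)$ disjoint from every stationary closed geodesic of $B$ and on the correct side, so $\Psi_s(\eta)\in\CC$ for all $s$. Because $B_{\Psi_s(\eta)}$ permanently contains a closed geodesic, the curve cannot collapse to a point; Grayson's theorem on closed surfaces then produces a $C^\infty$ limit $\gamma^\eta\in\GG$ with $L(\gamma^\eta)\leq L(\eta)$. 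Consequently $\ell^* = \inf_{\GG} L$.

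Third, I extract a minimizer. Pick $\gamma_n\in\GG$ with $L(\gamma_n)\to\ell^*$, use compactness of $SM$ to pass to a subsequence with $(\gamma_n(0),\dot\gamma_n(0))\to(x,v)\in SM$ and $\ell_n:=L(\gamma_n)\to\ell^*$, and set $\gamma^*(t):=\pi\circ\phi_t(x,v)$. Then $\gamma^*$ is a closed geodesic of length $\ell^*$, contained in $\overline{B\setminus N}\subset B$ by Assumption~\ref{a:convexity}; by the $C^2$-limit-of-embedded-circles argument used in Lemma~\ref{l:convergence_scg}, $\gamma^*$ is simple; and for any closed geodesic $\mu\neq\gamma^*$ in $B$ we have $\mu\neq\gamma_n$ eventually, hence $\mu\subset B_{\gamma_n}$, and the facts that a transverse crossing would persist under $C^1$ perturbation and a tangential contact of geodesics forces coincidence together yield $\mu\subset B_{\gamma^*}$. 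Thus $\gamma^*\in\GG$.

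Finally, assume for contradiction that $\gamma^*$ has conjugate points. Choose a small open neighborhood $U\subset B$ of $\gamma^*$ and let $V$ be the component of $U\setminus\gamma^*$ on the $\partial B$-side of $\gamma^*$. Lemma~\ref{l:Bangert} yields $\zeta\in\Emb(S^1,V)$, homotopic to $\gamma^*$ within $U$, with $L(\zeta)<L(\gamma^*)=\ell^*$; since $\zeta$ lies outside $\gamma^*$ in $B$, its filling $B_\zeta$ strictly contains $\overline{B_{\gamma^*}}$ and hence every closed geodesic of $B$, giving $\zeta\in\CC$ and contradicting the definition of $\ell^*$. I expect the compactness step in the third paragraph---simultaneously preserving simplicity (via orientability and Lemma~\ref{l:convergence_scg}), keeping the limit away from $\partial B$ (via Assumption~\ref{a:convexity}), and maintaining the enclosure of every other closed geodesic (via tangential-contact rigidity of geodesics)---to be the most delicate part of the argument.
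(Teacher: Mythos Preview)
Your argument is correct and lands on the same idea as the paper, but executes it more by hand. The paper's proof is shorter and more modular: it sets $K:=\overline{\bigcup_\gamma\dot\gamma}$ (union over all closed geodesics $\gamma\subset B$), notes that $\pi(K)\subset B\setminus N$ by Assumption~\ref{a:convexity}, applies Lemma~\ref{l:weakly_locally_convex} to see that the connected component $U\subset B\setminus\pi(K)$ meeting the collar $N$ is weakly convex, and then invokes the pre-packaged Lemma~\ref{l:convergence_scg} on a non-contractible class in $U$ to produce the minimizing simple closed geodesic $\gamma\subset\overline U$; Assumption~\ref{a:convexity} forces $\gamma\subset B$, and Lemma~\ref{l:Bangert} plus minimality rule out conjugate points. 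Your route instead builds the ``encloses every closed geodesic'' condition explicitly into the class $\CC$, runs Grayson's flow directly to pass to geodesics, and then extracts a minimizer by a further compactness step. What this buys you is an explicit verification that $B_\gamma$ contains every other closed geodesic---a point the paper leaves implicit in its choice of the non-contractible class $\CC\subset\Emb(S^1,U)$---at the cost of redoing work already packaged in Lemmas~\ref{l:convergence_scg} and~\ref{l:weakly_locally_convex}. One small quibble: the inequality $L(\eta)\geq\diam(B_\eta)$ that you use for $\ell^*>0$ is not justified and is not obviously true on a general Riemannian surface; it is cleaner to argue that if $L(\eta)<2\,\inj(M,g)$ then $\eta$, and hence $\overline{B_\eta}$, lies in a geodesic ball of radius $\inj(M,g)$, which cannot contain a closed geodesic.
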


\begin{proof}
We consider the compact subset
\begin{align*}
 K:=\overline{\bigcup_{\gamma} \dot\gamma},
\end{align*}
where the union ranges over all closed geodesics $\gamma$ contained in $B$. The subset $K$ is invariant under the geodesic flow (i.e.\ $\phi_t(K)=K$ for all $t\in\R$), and therefore its projection $\pi(K)$ is contained in the compact subset $B\setminus N$, where $N\subset\overline B$ is the open subset of $\partial B$ given by Assumption~\ref{a:convexity}. By Lemma~\ref{l:weakly_locally_convex}, the connected component $U\subset B\setminus\pi(K)$ intersecting $N$ is weakly convex. Since $U$ is not simply connected, there exists a connected component $\CC\subset\Emb(S^1,U)$ consisting of loops that are non-contractible in $U$ and whose fillings  contain $\pi(K)$. By Lemma~\ref{l:convergence_scg}, there exists a sequence $\gamma_n\in\CC$ converging in the $C^2$ topology to a simple closed geodesic $\gamma\subseteq \overline U$ of length
\begin{align}
\label{e:inf_waist_lemma2}
 L(\gamma)=\inf_{\zeta\in\CC} L(\zeta),
\end{align}
and whose filling $B_\gamma$ contains $\pi(K)\setminus\gamma$.
We now conclude as in the proof of Theorem~\ref{t:scg1}. We must have $\gamma_n\subset U\setminus N$ for all $n$ large enough, for otherwise $\gamma_n$ would intersect $\partial B$ according to Assumption~\ref{a:convexity}. Therefore $\gamma$ is contained in $B$. This, together with Lemma~\ref{l:Bangert} and Equation~\eqref{e:inf_waist_lemma2}, implies that $\gamma$ has no conjugate points.
\end{proof}

From now on, we assume that $B$ contains closed geodesics, and  that all simple closed geodesics without conjugate points and entirely contained in $B$ are non-degenerate (and therefore they are hyperbolic waists). The open ball $B$ may contain infinitely many simple closed geodesics. Nevertheless, we have the following statement.

\begin{Lemma}\label{l:V_finite}
Any collection of pairwise disjoint simple closed geodesics contained in $B$ is finite.
\end{Lemma}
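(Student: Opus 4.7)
The plan is to argue by contradiction, assuming an infinite family $\{\gamma_n\}_{n\in\N}$ of pairwise disjoint simple closed geodesics in $B$. Each $\gamma_n$ bounds a disk $B_{\gamma_n}\subset B$ (in the notation fixed just before the lemma), and two such fillings are either nested or disjoint. By the infinite Ramsey dichotomy applied to the induced partial order, I would pass to a subsequence that is either (A) an \emph{antichain}, with the $B_{\gamma_n}$ pairwise disjoint, or (B) a strict \emph{chain}, $B_{\gamma_1}\supsetneq B_{\gamma_2}\supsetneq\cdots$.

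Case (A) is immediate from Gauss--Bonnet: each disk satisfies $\int_{B_{\gamma_n}}K_g\,dm_g=2\pi$, so $\max_M K_g>0$, whence $\area(B_{\gamma_n},g)\geq 2\pi/\max_M K_g$ uniformly in $n$, and the disjointness of the $B_{\gamma_n}$ contradicts $\area(B,g)\leq\area(M,g)<\infty$. In case (B), the positive decreasing sequence $\area(B_{\gamma_n},g)$ converges, so the annular gaps $A_n:=\overline{B_{\gamma_n}}\setminus B_{\gamma_{n+1}}$ have $\area(A_n,g)\to 0$. The next step is a uniform length bound $L(\gamma_n)\leq L_0$: revisiting the proof of Lemma~\ref{l:annulus} and choosing the scale $r$ as a function of $\area(A_n,g)$, the ratio $L(\gamma_{n+1})/L(\gamma_n)$ can be quantified so that $|L(\gamma_{n+1})-L(\gamma_n)|$ is summable in view of $\sum_n\area(A_n,g)\leq\area(B,g)<\infty$, yielding a Cauchy sequence of lengths.

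With the length bound at hand, fix $x_n\in\gamma_n$ with $x_n\to x_\infty\in\overline B$, parametrize $\gamma_n$ with unit speed and $\gamma_n(0)=x_n$, and extract a further subsequence with $\dot\gamma_n(0)\to v_\infty\in S_{x_\infty}M$ and $L(\gamma_n)\to\ell_\infty\in[2\inj(M,g),L_0]$. The identity $\phi_{L(\gamma_n)}(\dot\gamma_n(0))=\dot\gamma_n(0)$ passes to the limit, so $\sigma(t):=\pi\circ\phi_t(v_\infty)$ is a closed geodesic of period $\ell_\infty$, with $\gamma_n\to\sigma$ in $C^\infty$. As the $C^2$-limit of embedded circles on the oriented surface $M$ (cf.\ the last sentence of the proof of Lemma~\ref{l:convergence_scg}), $\sigma$ is itself a simple closed geodesic; by the nesting, $\sigma\subset\bigcap_n\overline{B_{\gamma_n}}$, hence $\sigma$ is disjoint from every $\gamma_n$ and, by Assumption~\ref{a:convexity}, contained in $B\setminus N$.

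The contradiction is closed by ruling out both alternatives for $\sigma$. If $\sigma$ had conjugate points, Corollary~\ref{c:intersecting}(ii) would supply a neighborhood $U\supset\sigma$ such that every geodesic starting in $SU$ meets $\sigma$ within bounded time; for $n$ large $\gamma_n\subset U$, which would force $\gamma_n\cap\sigma\neq\varnothing$, against disjointness. Hence $\sigma$ has no conjugate points, so by the standing hypothesis of the lemma $\sigma$ is non-degenerate, i.e.\ hyperbolic. But a hyperbolic closed orbit of the geodesic flow is isolated in the set of periodic orbits (apply the implicit function theorem to $\psi-\id$, where $\psi$ is the Poincar\'e first-return map on a small transverse section to $\dot\sigma$, whose linearization $d\phi_{\ell_\infty}|_V-\id$ is invertible), contradicting $\dot\gamma_n\to\dot\sigma$ with $\gamma_n\neq\sigma$. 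The step I expect to be the main obstacle is precisely the uniform length bound in case (B): Lemma~\ref{l:annulus} as stated only yields $L(\gamma_{n+1})/L(\gamma_n)\in[\tfrac25,\tfrac52]$, which a priori permits geometric length blow-up, and the fix requires a quantitative refinement of that lemma with comparison constant tending to $1$ as $\area(A_n,g)\to 0$, combined with the telescoping summability of the annular areas.
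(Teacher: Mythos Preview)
Your overall architecture---reduce to a convergent sequence of pairwise disjoint simple closed geodesics, extract a limit $\sigma\subset B$, then use Corollary~\ref{c:intersecting}(ii) and the standing non-degeneracy hypothesis to reach a contradiction---matches the paper's proof. Your Ramsey antichain/chain dichotomy is a clean variant of the paper's organization: the paper instead extracts a maximal antichain $\GG'\subset\GG$ via Gauss--Bonnet, then groups the remaining geodesics by their isotopy class in $B\setminus\bigcup_{\gamma\in\GG'}\gamma$, but the endgame (Palais--Smale compactness, the limit being disjoint from the approximants, and isolation of non-degenerate closed geodesics) is the same.

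The one genuine issue is the length bound in Case~(B), and your proposed fix is harder than necessary and may not close. A quantitative refinement of Lemma~\ref{l:annulus} with ratio tending to $1$ is plausible, but even granting it, summability of $|L(\gamma_{n+1})-L(\gamma_n)|$ is unclear: tracing through the proof of Lemma~\ref{l:annulus} with $r\to 0$ the naive bound on the ratio is $1+O(r)$ with $a(r)$ of order $r^2$, so $|L(\gamma_{n+1})-L(\gamma_n)|$ is controlled by $\sqrt{\area(A_n,g)}$, and $\sum_n\sqrt{\area(A_n,g)}$ need not converge even though $\sum_n\area(A_n,g)<\infty$. The point you are missing is that you should compare every $\gamma_n$ to a \emph{single fixed} reference rather than to its immediate neighbor. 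Since $\area(B_{\gamma_n},g)$ decreases to a positive limit (bounded below by $2\pi/\max K_g$ via Gauss--Bonnet), the tail sums $\sum_{k\geq m}\area(A_k,g)$ tend to $0$; hence for some fixed large $m$ and \emph{every} $n>m$ the full annulus $\overline{B_{\gamma_m}}\setminus B_{\gamma_n}$ has area at most the constant $a_0$ of Lemma~\ref{l:annulus}, and that lemma as stated already gives $L(\gamma_n)\leq\tfrac52\,L(\gamma_m)$ uniformly in $n$. This is exactly the mechanism the paper uses (phrased there as choosing finitely many reference geodesics $\gamma_{h,i}$ in each isotopy class), and it requires no sharpening of Lemma~\ref{l:annulus}.
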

\begin{proof}
Let $\GG$ be a collection of pairwise disjoint simple closed geodesics contained in the open ball $B$. Let $\GG'\subseteq\GG$ be a maximal subcollection of simple closed geodesics such that $B_{\gamma_1}\cap B_{\gamma_2}=\varnothing$ for all distinct $\gamma_1,\gamma_2\in\GG'$; namely, for each $\zeta\in\GG\setminus\GG'$, we have $B_{\zeta}\cap B_{\gamma}\neq\varnothing$ for some $\gamma\in\GG'$. 
Gauss-Bonnet formula implies
\begin{align*}
2\pi=\int_{B_{\gamma}} K_g\,dm_g\leq \max(K_g)\,\area(B_{\gamma},g),
\qquad\forall \gamma\in\GG,
\end{align*}
where $m_g$ is the Riemannian measure, and $K_g$ the Gaussian curvature of $(M,g)$. Therefore, $\GG'$ is a finite collection of cardinality
\begin{align*}
k:=\#\GG'\leq \frac{\max(K_g)\,\area(B,g)}{2\pi}.
\end{align*}
We define $U\subset B$ to be the complement of the simple closed geodesics in $\GG'$, i.e.
\begin{align*}
U:=B\setminus\bigcup_{\gamma\in\GG'}\gamma.
\end{align*}

We consider the family $\pi_0(\Emb(S^1,U))$ of path-connected components of the space of  embedded loops in $U$. Notice that $\pi_0(\Emb(S^1,U))$ is infinite when $k\geq3$. Nevertheless, since $\GG$ is a collection of pairwise disjoint simple closed geodesics, there are only finitely many homotopy classes $h\in \pi_0(\Emb(S^1,U))$ containing elements of $\GG\setminus\GG'$. 
For every such homotopy class $h$, let $\GG_h$ be the subcollection of those $\gamma\in\GG\setminus\GG'$ contained in $h$. For each pair of distinct $\gamma_1,\gamma_2\in\GG_h$, we denote by $A_{\gamma_1,\gamma_2}\subset B$ the compact annulus with boundary $\partial A_{\gamma_1,\gamma_2}=\gamma_1\cup\gamma_2$. Consider the constant $a_0>0$ provided by Lemma~\ref{l:annulus}.
Since $\area(U,g)<\infty$, there exist finitely many $\gamma_{h,1},...,\gamma_{h,n_h}\in\GG_h$ such that, for each $\gamma\in\GG_h\setminus\{\gamma_{h,1},...,\gamma_{h,n_h}\}$, we have $\area(A_{\gamma,\gamma_{h,i}},g)\leq a_0$ for some $i$. We consider the finite subcollection $\GG''\subset\GG$ that comprises $\GG'$ and all the elements $\gamma_{h,j}$. Summing up, we showed that, for each $\gamma\in\GG\setminus\GG''$, there exists $\zeta\in\GG''$ such that $\area(A_{\gamma,\zeta})\leq a_0$. This, together with Lemma~\ref{l:annulus}, implies
\begin{align*}
\ell
:=
\sup_{\gamma\in\GG} L(\gamma) 
\leq 
\max_{\gamma\in\GG''} \tfrac92 L(\gamma) 
<
\infty.
\end{align*}

Let $\KK\subset W^{1,2}(S^1,M)$ be the subspace of closed geodesics $\gamma:S^1\to M$ of length $L(\gamma)\leq\ell$, pa\-ram\-etrized with constant speed.
As a consequence of the Palais--Smale condition for the geodesic energy functional \cite[Theorem~1.4.7]{Klingenberg:1978wy}, $\KK$ is compact in the $W^{1,2}$ topology.

Assume by contradiction that there exist an infinite sequence $\gamma_n\in\GG\subset\KK$. Up to extracting a subsequence, $\gamma_n$ converges to a simple closed geodesic $\gamma$ in the $W^{1,2}$ topology. A priori, $\gamma$ is contained in the closure $\overline B$, but Lemma~\ref{l:gamma_0} guarantees that $\gamma$ is actually contained in the open ball $B$.
Since the $\gamma_n$'s are pairwise disjoint, they are also disjoint from their limit $\gamma$. This, together with Corollary~\ref{c:intersecting}(ii), implies that $\gamma$ is without conjugate points, and therefore it must be non-degenerate by our assumption. This gives a contradiction, since a non-degenerate closed geodesic $\gamma$ cannot be the $W^{1,2}$-limit of closed geodesics that are disjoint from $\gamma$.
\end{proof}

\begin{proof}[Proof of Theorem~\ref{t:scg2}]

Let $\gamma_1,...,\gamma_n$ be a maximal sequence of pairwise disjoint simple closed geodesics contained in the open ball $B$ such that 
$\gamma_{i+1}\subset B_{\gamma_i}$ for each $i=1,...,n-1$. Lemma~\ref{l:V_finite} implies that such a sequence must be finite. The maximality of this sequence implies that every other simple closed geodesic $\gamma$ contained in $B$ and not intersecting any $\gamma_i$ must have filling $B_\gamma$ contained in the complement $U:=B\setminus(\gamma_1\cup...\cup\gamma_n)$. Notice that the first simple closed geodesic $\gamma_1$ must be the waist provided by Lemma~\ref{l:gamma_0}, whose filling $B_{\gamma_1}$ contains any other closed geodesic contained in $B$. We recall that, by our assumption on $B$, any simple closed geodesic contained in $B$ that is not a waist must have conjugate points.

For each $i\in\{1,...,n-1\}$, $\gamma_i$ or $\gamma_{i+1}$ must be a waist. Indeed, if none of them were waists, Lemma~\ref{l:Bangert} would imply that the open annulus $A_i:=B_{\gamma_{i}}\setminus\overline{B_{\gamma_{i+1}}}$ contains a non-contractible embedded circle $\zeta_0$ with length $L(\zeta_0)<\min\{L(\gamma_{i}),L(\gamma_{i+1})\}$, and Lemma~\ref{l:convergence_scg} would imply that there exists a non-contractible closed geodesic $\zeta\subset\overline {A_i}$ such that $L(\zeta)\leq L(\zeta_0)$; these inequalities would imply that $\zeta$ is contained in $A_i$, contradicting the maximality of the sequence $\gamma_1,...,\gamma_n$.
Moreover, $\gamma_i$ and $\gamma_{i+1}$ cannot both be waists. Otherwise Lemma~\ref{l:minmax}(i) would imply that the open annulus $A_i$ contains a non-contractible simple closed geodesic $\zeta$, contradicting the maximality of the sequence $\gamma_1,...,\gamma_n$.  
Finally, the last simple closed geodesic $\gamma_n$ of the sequence cannot be a waist, for otherwise Lemma~\ref{l:minmax}(ii) would imply that its filling $B_\gamma$ contains another simple closed geodesic, contradicting once more the maximality of the sequence $\gamma_1,...,\gamma_n$. Summing up, we proved that $n$ is even, $\gamma_i$ is a waist if $i$ is odd, whereas $\gamma_i$ has conjugate points if $i$ is even.

We denote by $\KK=\{\gamma_1,\gamma_3,...,\gamma_{n-1}\}$ the collection of waists in the sequence $\gamma_1,...,\gamma_{n}$, and we set
\begin{align*}
 K:=\bigcup_{\gamma\in\KK} (\dot\gamma\cup-\dot\gamma).
\end{align*}
Lemma~\ref{l:Ws_waist} implies 
\[\Ws(K)\setminus K\subseteq\trap_+(SU).\]
Conversely, consider an arbitrary point $z\in\trap_+(SU)$, so that $\phi_t(z)\in SU$ for all $t>0$ large enough. We shall prove that the $\omega$-limit $\omega(z)$ is either $\dot\gamma$ or $-\dot\gamma$ for some $\gamma\in\KK$, which will imply the opposite inclusion
\begin{align*}
\trap_+(SU) \subseteq \Ws(K)\setminus K.
\end{align*}
Let $\pi:SM\to M$ be the base projection. Since $z\in\trap_+(SU)$, there exists a connected component $W\subset U$ such that $\phi_t(z)\in SW$ for all $t>0$ large enough, and therefore $\pi(\omega(z))\subset\overline W$. We have three possible cases.\vspace{3pt}

\emph{Case 1}: $W=B_{\gamma_n}$. Since $\gamma_n$ has conjugate points, Corollary~\ref{c:intersecting}(ii) implies that the geodesic $\pi\circ\phi_t(z)$ does not enter a sufficiently small neighborhood of $\partial B_{\gamma_n}$ for all $t>0$ large enough. Therefore, $\pi(\omega(z))$ is contained in the open disk $B_{\gamma_n}$. Since $\omega(z)$ is compact, so is its projection $\pi(\omega(z))$.
Lemma~\ref{l:weakly_locally_convex} implies that any path-connected component of the open set $B_{\gamma_n}\setminus\pi(\omega(z))$ is weakly convex, and Lemma~\ref{l:Bangert} implies that $B_{\gamma_n}\setminus\pi(\omega(z))$ contains a non-contractible embedded loop $\zeta_0$ with length $L(\zeta_0)<L(\gamma_n)$. We can now apply Lemma~\ref{l:convergence_scg}, which provides a sequence of non-contractible smooth embedded circles in $B_{\gamma_n}\setminus\pi(\omega(z))$ converging in the $C^2$ topology to a simple closed geodesic $\zeta\subset\overline{B_{\gamma_n}\setminus\pi(\omega(z))}$ with length $L(\zeta)\leq L(\zeta_0)<L(\gamma_n)$. These inequalities readily imply that $\zeta$ is contained in the open disk $B_{\gamma_n}$, which contradicts the maximality of the sequence $\gamma_1,...,\gamma_n$. This proves that case 1 cannot occur.\vspace{3pt}

\emph{Case 2}: $W=B\setminus \overline{B_{\gamma_1}}$. Since $B$ satisfies Assumption~\ref{a:convexity}, the geodesic $\pi\circ\phi_t(z)$ does not enter a sufficiently small neighborhood of $\partial B$ for all $t>0$ large enough, for otherwise it would intersect $\partial B$ for $t>0$ arbitrarily large. Therefore the compact set $\pi(\omega(z))$ is contained in $\overline W\setminus\partial B$. Let $V\subset W\setminus\pi(\omega(z))$ be the connected component whose boundary $\partial V$ contains $\partial B$. By Lemma~\ref{l:weakly_locally_convex}, $V$ is weakly convex. Lemma~\ref{l:convergence_scg} provides a sequence of non-contractible smooth embedded loops $\zeta_k\subset V$ converging in the $C^2$ topology to a simple closed geodesic $\zeta\subset \overline V$. Notice that $\zeta$ cannot intersect $\partial B$, for otherwise $\zeta_{k}$ would intersect $\partial B$ as well for $k$ large enough according to Assumption~\ref{a:convexity}. Therefore, Lemma~\ref{l:gamma_0} implies that $\zeta=\gamma_1$, and we have $V=B\setminus \overline{B_{\gamma_1}}$ and $\pi(\omega(z))=\gamma_1$. Since $\omega(z)$ is connected and invariant under the geodesic flow, we infer that either $\omega(z)=\dot\gamma_1$ or $\omega(z)=-\dot\gamma_1$. 
\vspace{3pt}

\emph{Case 3}: $W=B_{\gamma_i}\setminus \overline{B_{\gamma_{i+1}}}$ for some $i\in\{1,...,n-1\}$. Let us assume that $i$ is odd, so that $\gamma_i$ is a waist and $\gamma_{i+1}$ has conjugate points (the case of $i$ even is analogous). Since $\gamma_{i+1}$ has conjugate points, Corollary~\ref{c:intersecting}(ii) implies that there exists a neighborhood of $\gamma_{i+1}$ that does not intersect $\pi(\omega(z))$. If $\pi(\omega(z))$ intersects the interior $W$, then we can apply Lemma~\ref{l:convergence_scg} as in case 1, and infer the existence of a simple closed geodesic $\zeta\subset W$, contradicting the maximality of the sequence $\gamma_1,...,\gamma_n$. This proves that 
$\pi(\omega(z))\subseteq\gamma_i$. 
Since $\omega(z)$ is connected and invariant under the geodesic flow, we infer that either $\omega(z)=\dot\gamma_i$ or $\omega(z)=-\dot\gamma_i$.\vspace{3pt}

Summing up, we proved that
\begin{align*}
\trap_+(SU) = \Ws(K)\setminus K.
\end{align*}
Since $\trap_-(SU)=-\trap_+(SU)$ and $\Wu(K)=-\Ws(K)$, we also have
\begin{align*}
\trap_-(SU) = \Wu(K)\setminus K.
\end{align*}
It remains to show that no complete geodesic is entirely contained in $U$. Assume by contradiction that this does not hold, so that there exists a point $z\in SU$ such that $\phi_t(z)\in SU$ for all $t\in\R$. In particular, $z\in\trap_+(SU)\cap\trap_-(SU)$, and therefore $z\in\Wu(\dot\gamma\cup-\dot\gamma)\cap\Ws(\dot\gamma\cup-\dot\gamma)\setminus(\dot\gamma\cup-\dot\gamma)$ for some waist $\gamma\in\KK$. Let $W\subset U$ be the connected component such that $\phi_t(z)\in SW$ for all $t\in\R$. The waist $\gamma$ is a connected component of $\partial W$, and therefore we must have either $W=B\setminus \overline{B_{\gamma_1}}$ or $W=B_{\gamma_i}\setminus \overline{B_{\gamma_{i+1}}}$ for some $i\in\{1,...,n-1\}$. The compact subset 
\[C:=\overline{\bigcup_{t\in\R}\phi_t(z)}\subseteq\dot\gamma\cup-\dot\gamma\cup\bigcup_{t\in\R}\phi_t(z)\]
is invariant under the geodesic flow, and therefore any path-connected component of $W\setminus\pi(C)$ is weakly convex according to Lemma~\ref{l:weakly_locally_convex}. The end of the argument is entirely analogous to the one in cases 2 and 3 above.
There exists a neighborhood of $\partial W\setminus\gamma$ that does not intersect $\pi(C)$. Since $\pi(C)$ intersects the open set $W$, with a suitable application of Lemma~\ref{l:convergence_scg} we infer the existence of a simple closed geodesic in $W$, contradicting the maximality of the sequence $\gamma_1,...,\gamma_n$.
\end{proof}

\section{Construction of surfaces of section}\label{s:proofs}

\subsection{Fried surgery of Birkhoff annuli}\label{ss:Fried}
The surfaces of section provided by our main theorems will be obtained by performing surgeries \`a la Fried \cite{Fried:1983uj} on a suitable collection of Birkhoff annuli of  closed geodesics. In this subsection, we briefly recall this procedure.

Let $(M,g)$ be an oriented Riemannian surface, and $J$ its associated complex structure, i.e.\ $v,Jv$ is an oriented orthonormal basis of $T_xM$ for all tangent vectors $v\in S_xM$. Let $\gamma:\R/\ell\Z\to M$ be a closed geodesic parametrized with unit speed $\|\dot\gamma\|_g\equiv1$.
The \emph{Birkhoff annulus} of $\gamma$ is the immersed compact annulus in $SM$ given by
\begin{align*}
 A(\dot\gamma):=\big\{ v\in S_{\gamma(t)}M\ \big|\ t\in\R/\ell\Z,\   g(J\dot\gamma(t),v)\geq0 \big\}.
\end{align*}
Its boundary $\partial A(\dot\gamma)=\dot\gamma\cup -\dot\gamma$ is embedded, while its interior $\interior(A(\dot\gamma))$ is immersed and transverse to the geodesic vector field $X$ on $SM$.

Assume now that $\gamma$ is simple. In this case, $A(\dot\gamma)$ is embedded, and therefore is a surface of section for $X$. By considering the opposite orientation on $\gamma$, we obtain a second Birkhoff annulus $A(-\dot\gamma)$ such that 
\[\interior(A(\dot\gamma))\cap\interior(A(-\dot\gamma))=\varnothing,
\qquad\partial A(\dot\gamma)=\partial A(-\dot\gamma)=\dot\gamma\cup -\dot\gamma.\]

\begin{figure}
\includegraphics{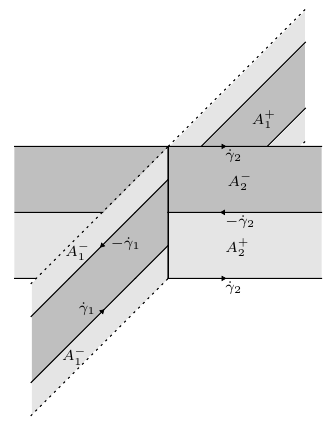}
\caption{The four Birkhoff annuli $A_1^+,A_1^-,A_{2}^+,A_{2}^-$ of the simple closed geodesics $\gamma_1,\gamma_2$. The central vertical line of double points is the fiber $S_xM$ of a point $x\in\gamma_1\cap\gamma_2$. The geodesic vector field should be thought as horizontal, and makes a full turn as we move through the line of double points.}
\label{f:singular_sos}
\end{figure}

For any pair of distinct simple closed geodesics $\gamma_1,\gamma_2$ of $(M,g)$ that intersect each other, the Birkhoff annuli $A_i^\pm:=A(\pm\dot\gamma_i)$ intersect as in Figure~\ref{f:singular_sos}. We see their union 
\[\Upsilon:=A_1^+\cup A_1^-\cup A_2^+\cup A_2^-\] as a non-injectively immersed surface with boundary 
\[\partial\Upsilon
=
\dot\gamma_1 \cup -\dot\gamma_1\cup\dot\gamma_2\cup-\dot\gamma_2.\]  Notice that, for any $\ell>\max\{L(\gamma_1),L(\gamma_2)\}$, there exists an open neighborhood $N\subset\Upsilon$ of $\partial\Upsilon$ such that, for each $z\in N$, the orbit segment $\phi_{(0,\ell]}(z)$ intersects $\Upsilon$.

We apply the following surgery procedures, due to Fried, in order to resolve the self-intersections with interior points of $\Upsilon$, and produce a surface of section $\Sigma\subset SM$ with the same boundary $\partial\Sigma=\partial\Upsilon$. Away from an arbitrarily small neighborhood $U$ of the subspace of self-intersections of $\interior(\Upsilon)$ with $\Upsilon$, we set $\Sigma\cap (SM\setminus U):=\Upsilon\cap (SM\setminus U)$. Along the lines of double points in $\interior(\Upsilon)$, we resolve the self-intersections and obtain $\Sigma$ as in Figure~\ref{f:surgery1}. Finally, near the intersections $\interior(\Upsilon)\cap\partial\Upsilon$, the surface $\Sigma$ is obtained as in Figure~\ref{f:surgery2}. Up to choosing the neighborhood $U$ where the surgery takes place to be small enough, for each $z\in\Upsilon$ the orbit segment $\phi_{(-1,1)}(z)$ intersects $\Sigma$; analogously, for each $z\in\Sigma$, the orbit segment $\phi_{(-1,1)}(z)$ intersects $\Upsilon$.

\begin{figure}
\includegraphics{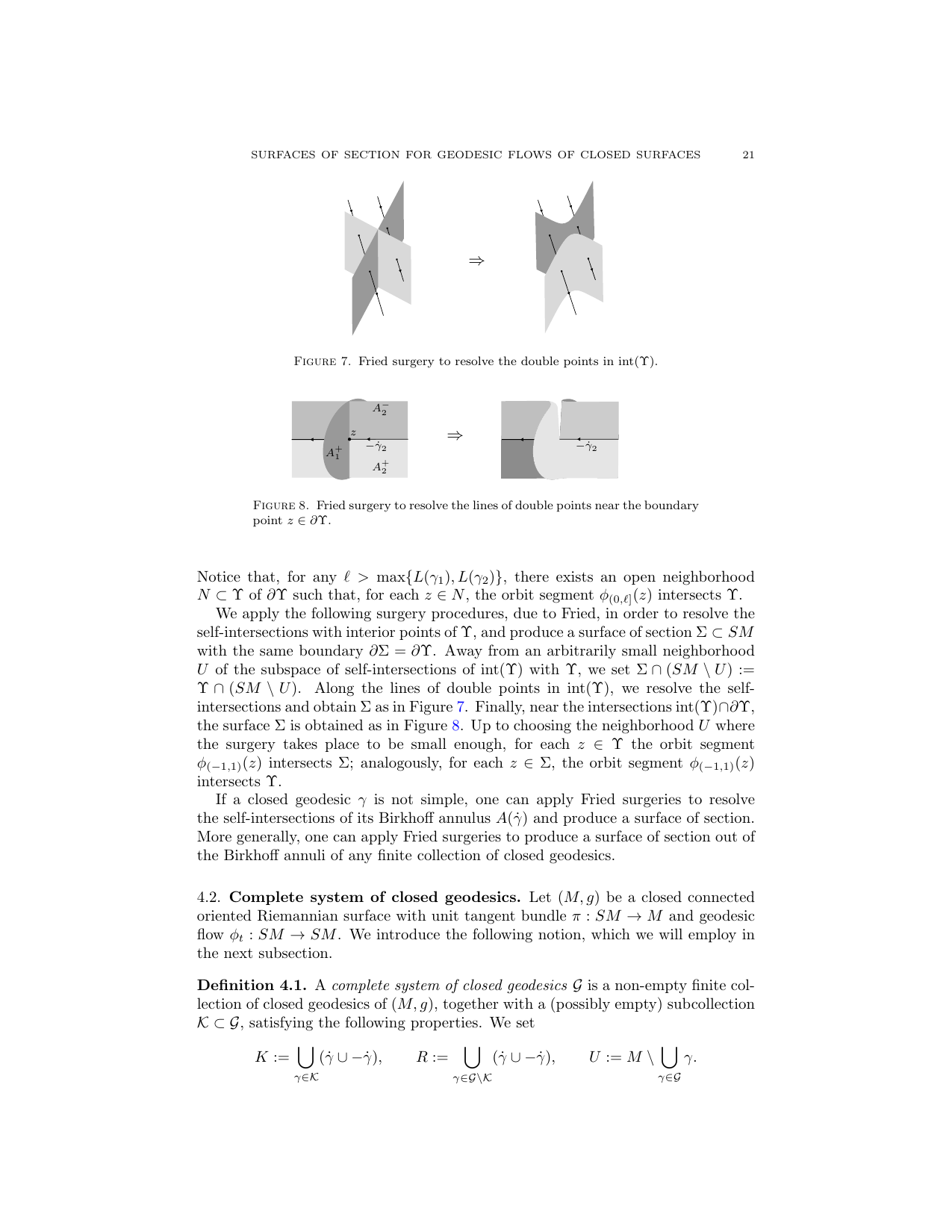}
\caption{Fried surgery to resolve the double points in $\interior(\Upsilon)$.}
\label{f:surgery1}
\end{figure}

\begin{figure}
\includegraphics{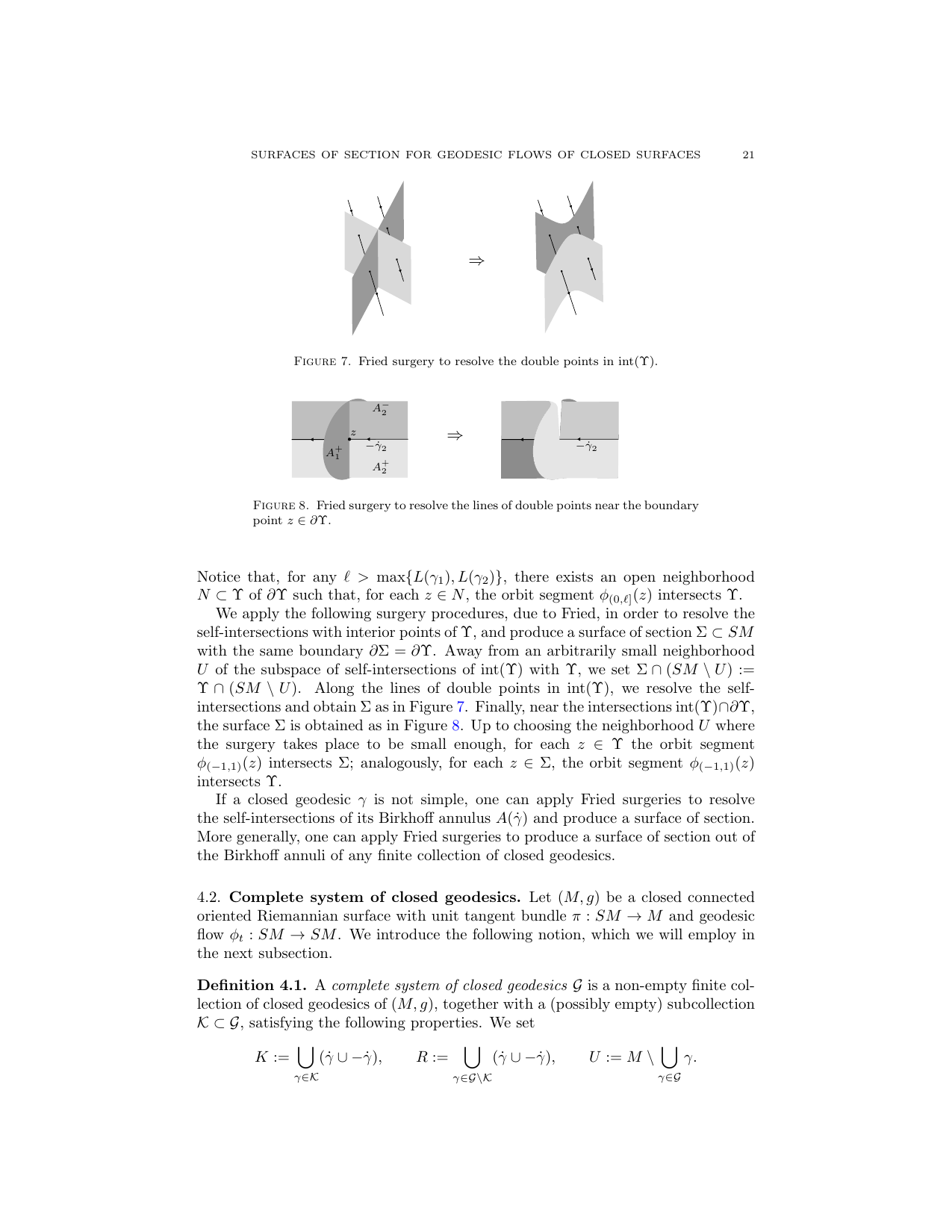}
\caption{Fried surgery to resolve the lines of double points near the boundary point $z\in\partial\Upsilon$.}
\label{f:surgery2}
\end{figure}

If a closed geodesic $\gamma$ is not simple, one can apply Fried surgeries to resolve the self-intersections of its Birkhoff annulus $A(\dot\gamma)$ and produce a surface of section. More generally, one can apply Fried surgeries to produce a surface of section out of the Birkhoff annuli of any finite collection of closed geodesics.

\subsection{Complete system of closed geodesics}\label{ss:complete_system}
Let $(M,g)$ be a closed connected oriented Riemannian surface with unit tangent bundle $\pi:SM\to M$ and geodesic flow $\phi_t:SM\to SM$. We introduce the following notion, which we will employ in the next subsection.

\begin{Definition}\label{d:complete_system}
 A \emph{complete system of closed geodesics} $\GG$ is a non-empty finite collection of  closed geodesics of $(M,g)$, together with a (possibly empty) subcollection $\KK\subset\GG$, satisfying the following properties. We set
\begin{align*}
 K:=\bigcup_{\gamma\in\KK} (\dot\gamma\cup-\dot\gamma),
 \qquad
 R:=\bigcup_{\gamma\in\GG\setminus\KK} (\dot\gamma\cup-\dot\gamma),
 \qquad
 U:=M\setminus\bigcup_{\gamma\in\GG}\gamma.
\end{align*}
\begin{itemize}
\item[(i)] Every $\gamma\in\KK$ is a non-degenerate contractible waist disjoint from all the other closed geodesics in $\GG\setminus\{\gamma\}$. In particular,  $K$ is a hyperbolic invariant subset for the geodesic flow.\vspace{2pt}

\item[(ii)] No complete orbit $\phi_{(-\infty,\infty)}(z)$ is entirely contained in $SU$, and the trapped sets of $SU$ are given by
\begin{align*}
 \trap_+(SU)=\Ws(K)\setminus K,
 \qquad
 \trap_-(SU)=\Wu(K)\setminus K.
\end{align*}
For this reason, we briefly call $\KK$ the \emph{limit subcollection} of $\GG$, and $K$ the \emph{limit set}.\vspace{2pt}

\item[(iii)] The invariant subset $R$ admits an open neighborhood $N\subset SM$ and a positive number $\ell>0$ such that, for each $z\in N$, the orbit segment $\phi_{(0,\ell]}(z)$ is not contained in $SU$. \hfill\qed
\end{itemize}
\end{Definition}

We stress that the closed geodesics in $\GG\setminus\KK$ are not necessarily simple, and can have mutual intersections as well.
A complete system of closed geodesics with empty limit subcollection will produce a Birkhoff section. For this purpose, we will need the following lemma.

\begin{Lemma}
\label{l:empty_limit_subcollection}
If the limit subcollection $\KK$ is empty, there is a constant $\ell'>0$ such that every geodesic segment of length $\ell'$ intersects some geodesic in $\GG$.
\end{Lemma}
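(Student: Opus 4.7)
The plan is a compactness-and-contradiction argument. Suppose no such $\ell'>0$ exists; then for every $n\in\N$ we can choose $v_n\in SM$ with $\phi_{[0,n]}(v_n)\subset SU$, so in particular $v_n\in SU$. By compactness we may pass to a subsequence with $v_n\to v$, and by continuity $\phi_t(v_n)\to\phi_t(v)$ for each fixed $t\geq 0$. The strategy is to contradict one of conditions (ii), (iii) in Definition~\ref{d:complete_system}, both of which are at our disposal since $\KK=\varnothing$ forces $K=\varnothing$ and hence $\Ws(K)\setminus K=\varnothing$.

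First I would reduce to the case $v\in SU$. If $v\in R$, condition~(iii) supplies a neighborhood $N\supset R$ and a number $\ell>0$ such that $\phi_{(0,\ell]}(v_n)\not\subset SU$ once $v_n\in N$, which contradicts $\phi_{[0,n]}(v_n)\subset SU$ for $n>\ell$. If instead $v\in\pi^{-1}(\bigcup_{\gamma\in\GG}\gamma)\setminus R$, then the geodesic determined by $v$ cannot coincide with any $\gamma\in\GG$ (else $v$ would be tangent to it and therefore belong to $R$), so its intersections with $\bigcup\gamma$ form a discrete set of times; picking a small $\epsilon>0$ with $\phi_\epsilon(v)\in SU$ and relabelling $v\leftarrow\phi_\epsilon(v)$, $v_n\leftarrow\phi_\epsilon(v_n)$, $n\leftarrow n-\epsilon$, the hypothesis $\phi_{[0,n]}(v_n)\subset SU$ persists and the new limit lies in $SU$.

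With $v\in SU$, two cases remain. If $\phi_{[0,\infty)}(v)\subset SU$, then $v\in\trap_+(SU)$, which by condition~(ii) equals $\Ws(K)\setminus K=\varnothing$, a contradiction. Otherwise, let $t_0>0$ be the smallest time with $\phi_{t_0}(v)\in\pi^{-1}(\gamma)$ for some $\gamma\in\GG$. Since $v\in SU$ and $R$ is flow-invariant, $\phi_{t_0}(v)\notin R$, so $\phi_{t_0}(v)$ is transverse to $\gamma$. Writing $\gamma$ locally as the zero set of a smooth function $f$, the map $F(v',t):=f(\pi\circ\phi_t(v'))$ satisfies $F(v,t_0)=0$ and $\partial_tF(v,t_0)\neq 0$, and the implicit function theorem produces a smooth function $s$ defined on a neighborhood of $v$ with $s(v)=t_0$ and $\phi_{s(v')}(v')\in\pi^{-1}(\gamma)$ for $v'$ in that neighborhood. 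Then $s(v_n)\to t_0$, and for $n$ large $s(v_n)\in(0,n)$, so $\phi_{s(v_n)}(v_n)\in\pi^{-1}(\gamma)\subset SM\setminus SU$, contradicting $\phi_{[0,n]}(v_n)\subset SU$. The main obstacle is handling the boundary-of-$SU$ subtlety carefully, that is, executing the shift argument cleanly and confirming that the transverse crossing in the limit really persists for the approximating orbits through the implicit function theorem; once this is done, conditions (ii) and (iii) furnish the contradictions in the two remaining cases with essentially no further work.
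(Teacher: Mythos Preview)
Your proof is correct and uses the same ingredients as the paper---compactness of $SM$, transversality of geodesic crossings with the curves in $\GG$, and conditions~(ii) and~(iii) of Definition~\ref{d:complete_system}---only organized as a contradiction/subsequence argument rather than the paper's direct open-cover argument. The two presentations are dual to each other: your case analysis on the location of the limit point $v$ (in $R$, in $\pi^{-1}(\bigcup\gamma)\setminus R$, or in $SU$) corresponds exactly to the paper's distinction between the neighborhood $N$ of $R$ and the transverse hitting-time neighborhoods $N_z$ for $z\in SM\setminus\Upsilon$.
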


\begin{proof}
We denote by $\Upsilon$ the union of the Birkhoff annuli of the closed geodesics in $\GG$, i.e.
\begin{align*}
 \Upsilon := \bigcup_{\gamma\in\GG} \big( A(\dot\gamma)\cup A(-\dot\gamma) \big).
\end{align*}
Since $\KK=\varnothing$, we have empty trapped sets $\trap_\pm(SU)=\varnothing$. Therefore, for each $z\in SM\setminus\Upsilon$, there exists $\ell_z>1$ such that the orbit segment $\phi_{[1,\ell_z-1]}(z)$ intersects $\interior(\Upsilon)$ transversely. The transversality guarantees that there exists an open neighborhood $N_z\subset SM$ of $z$ such that, for each $z'\in N_z$, the orbit segment $\phi_{[0,\ell_z]}(z')$ intersects $\interior(\Upsilon)$. Consider now the constant $\ell>0$ and the open neighborhood $N$ of $R$ given by property (iii) above. Since $SM$ is compact, there exists finitely many points $z_1,...,z_n\in SM$ such that $N\cup N_{z_1}\cup...\cup N_{z_n}=SM$. For $\ell':=\max\{\ell,\ell_{z_1},...,\ell_{z_n}\}$, we conclude that, for each $z\in SM$, the orbit segment $\phi_{[0,\ell']}(z)$ intersects $\Upsilon$. Namely, every geodesic segment of length $\ell'$ intersects some geodesic in $\GG$.
\end{proof}

The arguments of Contreras--Mazzucchelli's \cite[Section~4]{Contreras:2021vx} allow us to produce, out of a suitable complete system of closed geodesics with non-empty limit subcollection, a new such complete system with strictly smaller limit subcollection. The setting of \cite[Section~4]{Contreras:2021vx} employs Colin--Dehornoy--Rechtman's broken book decompositions \cite{Colin:2020tl}, but it turns out that the arguments go through in our simpler setting as well. We include the details in the rest of this section, for the reader's convenience. We begin with the following preliminary lemma due to Colin--Dehornoy--Rechtman \cite[Lemma~4.9]{Colin:2020tl}, which is based on an argument originally due to Hofer--Wysocki--Zehnder \cite[Proposition~7.5]{Hofer:2003wf}.

\begin{Lemma}
\label{l:homoclinics}
If the limit set $K$ is non-empty and satisfies the transversality condition 
\[\Wu(K)\pitchfork\Ws(K)
,\] 
then there exists $\gamma\in\KK$ such that $W\cap\Ws(\dot\gamma)\neq\varnothing$ for each path-connected component $W\subseteq\Wu(\dot\gamma)\setminus\dot\gamma$. Namely, the closed orbit $\dot\gamma$ has homoclinics in all path-connected components of $\Wu(\dot\gamma)\setminus\dot\gamma$.
\end{Lemma}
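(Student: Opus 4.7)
My approach follows the strategy of \cite[Lemma~4.11]{Colin:2020tl} (itself inspired by \cite{Hofer:2003wf}): prove that each unstable branch of a limit orbit contains a heteroclinic connection to the limit set $K$, and then use a finite combinatorial argument together with the $\lambda$-lemma to upgrade these heteroclinic connections to a pair of homoclinics on both unstable branches of a single orbit.

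\textbf{Step 1 (heteroclinic production).} Fix a hyperbolic closed orbit $\alpha=\pm\dot\gamma$ with $\gamma\in\KK$, and let $W$ be one of the two path-connected components of $\Wu(\alpha)\setminus\alpha$. I claim that $W\cap\Ws(K)\neq\varnothing$. Given any $z\in W$, the $\omega$-limit $\omega(z)$ is a closed, connected, flow-invariant subset of $SM$. The only flow-invariant subsets of the tori $\pi^{-1}(\gamma')$, $\gamma'\in\GG$, are the closed orbits $\pm\dot{\gamma'}$, so any invariant subset of $SM\setminus SU$ is contained in $K\cup R$; connectedness then leaves three cases: either $\omega(z)\subseteq K$ (so $z\in\Ws(K)$, done), or $\omega(z)$ is a single closed orbit inside $R$, or $\omega(z)$ meets $SU$. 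The second case is excluded by property~(iii) of Definition~\ref{d:complete_system}: a sufficiently small tubular neighborhood of any $\pm\dot{\gamma'}\subseteq R$ lies inside the neighborhood $N$, so orbits approaching $\pm\dot{\gamma'}$ must cross some geodesic of $\GG$ within uniform time $\ell$ with transverse velocity bounded below, ruling out asymptotic convergence. The third case is excluded by property~(ii) combined with the fact that no orbit in $\omega(z)$ can stay forever in $SU$. Hence case one holds for every $z\in W$, and in particular $W\cap\Ws(K)\neq\varnothing$.

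\textbf{Step 2 (combinatorial argument plus $\lambda$-lemma).} Let $\Gamma:=\{\pm\dot\gamma:\gamma\in\KK\}$ and form the finite directed graph $G$ with vertex set $\Gamma$ and an edge from $\alpha$ to $\beta$ whenever $\Wu(\alpha)\pitchfork\Ws(\beta)\neq\varnothing$, which is a transverse intersection by the hypothesis $\Wu(K)\pitchfork\Ws(K)$. Step~1 implies that \emph{each} of the two unstable branches of $\alpha$ contributes at least one outgoing edge. Pick a sink strongly connected component $\SSS\subseteq G$, that is, one admitting no outgoing edges of $G$ to vertices outside $\SSS$; fix $\alpha\in\SSS$. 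By the sink property, both unstable branches of $\Wu(\alpha)$ give edges to vertices in $\SSS$, and by strong connectedness each such target $\beta$ is joined back to $\alpha$ by a directed path inside $\SSS$. Iterating the $\lambda$-lemma along the resulting cycle through a chosen branch $W$, the closure $\overline{W}$ contains $\Wu(\beta')$ for the vertex $\beta'$ immediately preceding $\alpha$ on the cycle; since the edge $\beta'\to\alpha$ is a transverse intersection of $\Wu(\beta')$ with $\Ws(\alpha)$ at some point $p$, and $W$ is $C^1$-close to $\Wu(\beta')$ near $p$ by the $\lambda$-lemma, we obtain a transverse homoclinic point on $W\cap\Ws(\alpha)$. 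Applying this to both branches of $\Wu(\alpha)$ produces homoclinics on both path-connected components of $\Wu(\alpha)\setminus\alpha$. Writing $\alpha=\pm\dot\gamma$ for some $\gamma\in\KK$, and, if necessary, replacing $\gamma$ by the same simple closed geodesic with reversed orientation, we obtain the desired $\gamma\in\KK$.

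\textbf{Main obstacle.} The principal difficulty is Step~1, specifically ruling out the case in which some orbit in $W$ has $\omega$-limit equal to a single closed orbit in $R$. This requires combining the uniform return-time property~(iii) with a quantitative estimate on the transverse velocity of orbits near $\pm\dot{\gamma'}$ for $\gamma'\in\GG\setminus\KK$, showing that asymptotic convergence to such an orbit is incompatible with forced bounded-time crossings of $\bigcup_{\gamma'\in\GG}\gamma'$. Once Step~1 is established, Step~2 is a formal consequence of the $\lambda$-lemma and the finiteness of the graph $G$.
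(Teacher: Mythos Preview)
Your Step~2 is fine and essentially matches the paper's combinatorial argument (the paper uses shadowing where you invoke the $\lambda$-lemma, but the effect is the same). The problem is Step~1.

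You argue pointwise: fixing an arbitrary $z\in W$, you try to show $\omega(z)\subseteq K$, i.e.\ that \emph{every} point of the unstable branch lies in $\Ws(K)$. That conclusion is false in general. Once $\dot\gamma$ has a transverse homoclinic (which your own Step~2 will eventually produce), a horseshoe appears, and then a dense $G_\delta$ of points on $W$ have $\omega$-limits equal to complicated invariant Cantor sets meeting $SU$, not subsets of $K$. Your exclusion of case~3 is where the argument breaks: if $\omega(z)\cap SU\neq\varnothing$, property~(ii) of Definition~\ref{d:complete_system} only tells you that each orbit inside $\omega(z)$ must leave $SU$ at some time; it does not forbid $\omega(z)$ from being an invariant set that repeatedly enters and exits $SU$. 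There is no contradiction to extract here, and the sentence ``Hence case one holds for every $z\in W$'' does not follow. (Your identification of case~2 as the ``main obstacle'' is a misdiagnosis; case~3 is the real gap.)

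The paper's proof of the heteroclinic step is global and symplectic, not pointwise. One assumes for contradiction that the \emph{entire} branch satisfies $W\cap\Ws(K)=\varnothing$, chooses an embedded circle $S\subset W$ transverse to the flow and $C^1$-close to $\dot\gamma$, and builds a surface of section $\Sigma$ out of the Birkhoff annuli of $\GG$ (resolving self-intersections by Fried surgery). The hypothesis $W\cap\Ws(K)=\varnothing$ combined with $\trap_+(SU)=\Ws(K)\setminus K$ guarantees that the first return map to $\Sigma$ is defined on every forward iterate of $S$, producing infinitely many pairwise disjoint embedded circles $S_n\subset\interior(\Sigma)$. Since $d\lambda$ vanishes on $W$, Stokes' theorem forces any $S_n$ that bounds a disk in $\interior(\Sigma)$ to bound one of $d\lambda$-area exactly $L(\gamma)>0$; hence only finitely many $S_n$ can be contractible, and two of the remaining ones cobound an annulus $A\subset\interior(\Sigma)$. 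But then $\int_A d\lambda=0$ by Stokes applied to the torus $A\cup A'$ (with $A'\subset W$), contradicting that $d\lambda|_A$ is an area form. This Hofer--Wysocki--Zehnder type area argument is the missing ingredient; properties~(ii)--(iii) of the complete system alone are not enough to force a heteroclinic out of $W$.
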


\begin{proof}
We first show that, for each $\gamma\in\KK$ and for each connected component $W\subseteq\Wu(\dot\gamma)\setminus\dot\gamma$, there exists a heteroclinic  
\begin{align}
\label{e:heteroclinic}
W\cap\Ws(K)\neq\varnothing 
\end{align}
Let us prove this by contradiction, assuming that $W\cap\Ws(K)=\varnothing$. 

The path-connected component $W$ is an immersed cylinder in $SM$ with one end equal to $\dot\gamma$. Let $S\subset W$ be an embedded essential circle that is $C^1$-close to $\dot\gamma$ and transverse to the geodesic vector field, so that its base projection $\pi(S)$ does not intersect $\gamma$ (Lemma~\ref{l:Ws_waist}), nor any other closed geodesic in the collection $\GG$. We consider the union of the Birkhoff annuli
\begin{align*}
\Upsilon':=\bigcup_{\zeta\in\GG\setminus\{\gamma\}} \big( A(\dot\zeta)\cup A(-\dot\zeta) \big),
\end{align*}
and an open neighborhood $V\subset SM$ of $\Upsilon'$ such that $V\cap \phi_{(-\infty,0]}(S)=\varnothing$. We apply Fried surgery as explained in Section~\ref{ss:Fried} in order to resolve the self-intersection points of $\Upsilon'$ with its interior (if there are any), and produce  surfaces of section $\Sigma'\subset V$ such that 
\begin{align}
\label{e:Fried_correspondence}
\phi_{(-1,1)}(z)\cap\Sigma'\neq\varnothing,\ \ \forall z\in\Upsilon'.
\end{align}
We set 
\[\Upsilon=\Upsilon'\cup A(\dot\gamma)\cup A(-\dot\gamma),\qquad \Sigma=\Sigma'\cup A(\dot\gamma)\cup A(-\dot\gamma).\]
For each $z\in S$, we have $\phi_{(-\infty,0]}(z)\subset SU$, whereas $\phi_{(-\infty,\infty)}(z)$ is not entirely contained in $SU$, and thus intersects $\Upsilon$. This, together with~\eqref{e:Fried_correspondence}, implies that there exists a minimal $t_z>0$ such that the orbit $t\mapsto\phi_{t}(z)$ intersects $\Sigma$ transversely for $t=t_z$. This transversality, together with the compactness of the circle $S$, implies that the function $z\mapsto t_z$ is smooth on $S$, and we obtain an embedded circle
\begin{align*}
 S_0:=\big\{\phi_{t_z}(z)\ \big|\ z\in S\big\}\subset W\cap\Sigma.
\end{align*}
We consider the first return time
\begin{align*}
\tau:\interior(\Sigma)\to(0,+\infty],
\qquad
\tau(z):=\inf\big\{ t>0\  \big|\ \phi_{t}(z)\in\Sigma \big\},
\end{align*}
which is smooth on the open subset $\Sigma_0:=\tau^{-1}(0,\infty)$. The first return map
\begin{align*}
 \psi:\Sigma_0\to\interior(\Sigma),\qquad\psi(z)=\phi_{\tau(z)}(z)
\end{align*}
is a diffeomorphism onto its image that preserves the area form $d\lambda|_{\interior(\Sigma)}$, where $\lambda$ is the Liouville contact form of $SM$. 

Since $S_0\cap\Ws(K)=\varnothing$, we have $S_0\cap\trap_+(SU)=\varnothing$, and therefore $\psi^n(S_0)\subset\Sigma_0$ for all $n\geq0$. We obtained an infinite sequence  $S_n:=\psi^n(S_0)$ of pairwise disjoint embedded circles in the interior of the surface of section $\Sigma$. Since the unstable manifold $\Wu(\dot\gamma)$ is an immersed surface tangent to the geodesic vector field, the 2-form $d\lambda|_W$ vanishes identically. If $S_n$ bounds a disk $B_n\subset\interior(\Sigma)$, we denote by $A_n\subset W$ the compact annulus with boundary $\partial A_n=\dot\gamma\cup S_n$, and Stokes' theorem implies
\begin{align*}
\area(B_n,d\lambda)=\int_{B_n} d\lambda=\int_{A_n\cup B_n} d\lambda = \int_{\dot\gamma} \lambda = L(\gamma),
\end{align*}
where $L(\gamma)>0$ is the length of the simple closed geodesic $\gamma$. In particular, if $S_{n_1},S_{n_2}$ bound disks $B_{n_1},B_{n_2}\subset\interior(\Sigma)$ for some distinct $n_1,n_2\geq0$, we have $B_{n_1}\cap B_{n_2}=\varnothing$.
This, together with the finiteness of the area
\begin{align*}
\area(\interior(\Sigma),d\lambda)
=
\int_{\Sigma} d\lambda = \int_{\partial\Sigma}\lambda,
\end{align*}
readily implies that there exist at most finitely many $n\geq0$ such that $S_n$ is contractible in $\interior(\Sigma)$. Therefore, since the embedded circles $S_n$ are pairwise disjoint, there exist distinct $n_1,n_2\geq0$ and an embedded compact annulus $A\subset\interior(\Sigma)$ with boundary $\partial A=S_{n_1}\cup S_{n_2}$. Let $A'\subset W$ be the embedded compact annulus with boundary $\partial A'=S_{n_1}\cup S_{n_2}$, so that the union $A\cup A'$ is a piecewise smooth embedded torus in $SM$. By Stokes theorem, we have
\begin{align*}
 \int_A d\lambda = \int_{A\cup A'} d\lambda = 0,
\end{align*}
which contradicts the fact that $d\lambda|_{A}$ is an area form. This proves the existence of heteroclinics~\eqref{e:heteroclinic}.

Our transversality assumption on the heteroclinics, together with the shadowing lemma from hyperbolic dynamics \cite[Th.~5.3.3]{Fisher:2019vz}, implies that, for all $\gamma_1,\gamma_2,\gamma_3\in\KK$ and for each path-connected component $W\subset\Wu(\dot\gamma_1)\setminus\dot\gamma_1$, if there are heteroclinics $W\cap\Ws(\dot\gamma_2)\neq\varnothing$ and  $\Wu(\dot\gamma_2)\cap\Ws(\dot\gamma_3)\neq\varnothing$, then there are also heteroclinics $W\cap\Ws(\dot\gamma_3)\neq\varnothing$.

For each $\gamma\in\KK$, we fix arbitrary path-connected components $W_{\pm\dot\gamma}\subset\Wu(\pm\dot\gamma)\setminus\pm\dot\gamma$. We already proved that every such path-connected component must contain a heteroclinic to $K$. Therefore, there exists a sequence of oriented waists $\gamma_i\in\KK$ with heteroclinics $W_{\dot\gamma_i}\cap\Ws(\dot\gamma_{i+1})\neq\varnothing$. The same waist may appear different times with opposite orientations in the sequence. Nevertheless, since the collection $\GG$ is finite, there exists $n\leq 2\,\#\GG+1$ such that $\gamma_1=\gamma_n$ as oriented waists. This implies that $W_{\dot\gamma_1}\cap\Ws(\dot\gamma_1)\neq\varnothing$.
\end{proof}

\begin{Lemma}
\label{l:reduction}
If the limit set $K$ is non-empty and satisfies the transversality condition 
\[\Wu(K)\pitchfork\Ws(K),\] 
then there exists $\gamma\in\KK$ and  another complete system of closed geodesics with limit subcollection contained in $\KK\setminus\{\gamma\}$.
\end{Lemma}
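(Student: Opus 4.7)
The plan is to apply Lemma~\ref{l:homoclinics} to obtain a contractible waist $\gamma\in\KK$ whose closed orbit $\dot\gamma$ admits a transverse homoclinic orbit in each of the four path-connected components of $\Wu(\pm\dot\gamma)\setminus(\pm\dot\gamma)$, and to construct a new complete system $\GG'$ containing $\gamma$ in its non-limit part $\GG'\setminus\KK'$ by enlarging $\GG$ with new simple closed geodesics in each of the two open disks $B_+,B_-\subset M$ bounded by $\gamma$. By Lemma~\ref{l:Ws_waist}, each branch of $\Ws(\pm\dot\gamma)$ projects onto a half-neighborhood of $\gamma$ in one of $B_\pm$, so any orbit asymptotic to $\pm\dot\gamma$ in forward time must eventually enter $\interior(B_\epsilon)$ on a prescribed side $\epsilon\in\{+,-\}$.

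On each side $B_\epsilon$, I would use a transverse homoclinic orbit in the appropriate branch of $\Wu(\pm\dot\gamma)$ to produce a simple closed geodesic $\zeta_\epsilon\subset\interior(B_\epsilon)$ arbitrarily close to $\gamma$ in the $C^0$-sense, following the strategy of \cite[Section~4]{Contreras:2021vx}. The transverse homoclinic yields a Smale horseshoe for the first-return map to a transverse section of $\dot\gamma$, and the shadowing lemma provides periodic orbits of the geodesic flow following the itinerary ``revolve along $\dot\gamma$ many times, perform a single homoclinic excursion into $B_\epsilon$, revolve again''. The base projections of these orbits are closed geodesics (possibly self-intersecting) contained in an arbitrarily small neighborhood of $\gamma$ together with the projection of the homoclinic; resolving an innermost self-intersection and applying curve shortening in a suitable weakly convex region extracts a simple closed geodesic $\zeta_\epsilon\subset\interior(B_\epsilon)$ as close to $\gamma$ as desired.

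If $\zeta_\epsilon$ has conjugate points, then by Example~\ref{ex:conjugate_points} its filling $B_{\zeta_\epsilon}$ satisfies Assumption~\ref{a:convexity}. If instead $\zeta_\epsilon$ is itself a waist, then Lemma~\ref{l:minmax}(i) applied to the annular region between $\gamma$ and $\zeta_\epsilon$ produces a simple closed geodesic with conjugate points in between; I would relabel this inner geodesic $\zeta_\epsilon$, which now has conjugate points by construction. In either case, Theorem~\ref{t:scg2} applied inside $B_{\zeta_\epsilon}$ produces a finite nested family $\GG_\epsilon$ of simple closed geodesics realizing a complete system within $B_{\zeta_\epsilon}$. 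Set $\GG':=\GG\cup\{\zeta_+,\zeta_-\}\cup\GG_+\cup\GG_-$; any waist produced by Theorem~\ref{t:scg2} inside $B_{\zeta_\pm}$ was already trapping orbits inside $B_\gamma$ for the old system, hence by condition~(ii) of Definition~\ref{d:complete_system} for $(\GG,\KK)$ already belonged to $\KK\setminus\{\gamma\}$, so that $\KK'\subseteq\KK\setminus\{\gamma\}$.

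The main obstacle is the quantitative construction of $\zeta_\epsilon$ sufficiently $C^0$-close to $\gamma$. This closeness is required to verify conditions~(ii) and~(iii) of Definition~\ref{d:complete_system} for $\GG'$: every orbit on $\Ws(\pm\dot\gamma)\setminus(\pm\dot\gamma)$, after entering $\interior(B_\epsilon)$ by Lemma~\ref{l:Ws_waist}, must cross $\zeta_\epsilon$ (or an inner geodesic of $\GG_\epsilon$) in forward time; and every orbit in a small neighborhood of $\dot\gamma\cup-\dot\gamma$ whose velocity does not lie on $\Ws(\pm\dot\gamma)$ must exit a neighborhood of $\gamma$ and cross an element of $\GG'$ within uniformly bounded time. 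Securing this closeness is the technical heart of the proof, relying on a careful shadowing analysis near the hyperbolic orbit $\dot\gamma$.
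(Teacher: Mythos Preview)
Your approach has a fundamental gap: you cannot remove $\gamma$ from the limit subcollection by adding only simple closed geodesics \emph{disjoint} from $\gamma$. Indeed, take any $z\in\Ws(\dot\gamma)\setminus\dot\gamma$ in the local stable manifold. By Lemma~\ref{l:Ws_waist}, the entire forward orbit $\phi_{[0,\infty)}(z)$ projects into a half-collar of $\gamma$ without ever touching $\gamma$, and it converges to $\dot\gamma$. If your new geodesics $\zeta_\pm$ (and the further ones from $\GG_\pm$) lie at positive distance from $\gamma$, then for $z$ close enough to $\dot\gamma$ the projected geodesic never crosses any element of $\GG'$. Hence $z\in\trap_+(SU')$ while $\omega(z)=\dot\gamma\not\subset K'$, violating condition~(ii) of Definition~\ref{d:complete_system}; and no uniform $\ell$ as in condition~(iii) can exist near $\dot\gamma\cup-\dot\gamma$. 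Your claimed verification of (ii) and (iii) in the last paragraph is therefore incorrect: an orbit on $\Ws(\dot\gamma)$ that has already entered the half-collar on side $B_\epsilon$ is spiralling \emph{toward} $\gamma$, not outward toward $\zeta_\epsilon$.

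The paper's proof resolves this by producing a single closed geodesic $\zeta$ that \emph{transversely intersects} $\gamma$. The key construction is a periodic pseudo-orbit that follows a homoclinic on one side of $\gamma$ and then a homoclinic on the other side (or a single homoclinic that switches sides), so that its projection jumps across $\gamma$; shadowing then yields a genuine closed geodesic $\zeta$ crossing $\gamma$. Once $\zeta$ crosses $\gamma$, condition~(iii) near $\dot\gamma\cup-\dot\gamma$ is immediate (any orbit starting close to $\dot\gamma$ follows $\gamma$ for at least one period and hence meets $\zeta$), and condition~(ii) holds because orbits in $\Ws(\dot\gamma)$ now exit $SU'$. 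A secondary issue in your write-up: $\gamma$ is contractible, so it bounds a single disk in $M$ unless $M=S^2$; speaking of ``the two open disks $B_+,B_-$ bounded by $\gamma$'' is not correct in general.
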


\begin{proof}
By Lemma~\ref{l:homoclinics}, there exists $\gamma\in\KK$ whose associated periodic orbit $\dot\gamma$ has homoclinics in all path-connected components of $\Wu(\dot\gamma)\setminus\dot\gamma$. Since $\gamma$ is a non-degenerate waist, Lemma~\ref{l:Ws_waist} implies that there exists a tubular neighborhood $A\subset M$ of $\gamma$ and an open neighborhood $V\subset SM$ of $\dot\gamma$ such that, if we denote by $W\subset V\cap\Wu(\dot\gamma)$ the path-connected component containing $\dot\gamma$, the restriction of the base projection  $\pi|_W:W\to A$ is a diffeomorphism. An analogous statement holds for the stable manifold $\Ws(\dot\gamma)$. Therefore, for any homoclinic point $z\in\Wu(\dot\gamma)\cap\Ws(\dot\gamma)\setminus\dot\gamma$, the corresponding geodesic $\zeta(t):=\pi\circ\phi_t(z)$ is contained in $A\setminus\gamma$ provided $|t|$ is large enough.

The complement $A\setminus\gamma$ is the disjoint union of two open annuli $A_1$ and $A_2$, and therefore $W\setminus\dot\gamma$ is the disjoint union of the open annuli $W_1:=\pi|_W^{-1}(A_1)$ and $W_2:=\pi|_W^{-1}(A_2)$. Our assumption on $\gamma$ implies that both $W_1$ and $W_2$ intersect the stable manifold $\Ws(\dot\gamma)$, and we fix homoclinic points $z_i\in W_i\cap\Ws(\dot\gamma)$ such that, if we denote by $\zeta_i(t):=\pi\circ\phi_t(z)$ the associated geodesics, we have $\zeta_i(t)\in A_i$ for all $t\leq0$. We have two possible cases.\vspace{2pt}

\emph{Case 1}: There exists $i\in\{1,2\}$ such that $\zeta_i(t)\in A_{3-i}$ for all $t>0$ large enough. For each $\delta>0$, there exist arbitrarily large $a,b>0$ such that the points $\phi_{-a}(z_i)$ and $\phi_{b}(z_i)$ are $\delta$-close (where the distance on $SM$ is the one induced by the Riemannian metric $g$). We consider the orbit segment
\begin{align*}
\Gamma_\delta:[-a,b]\to SM,\qquad \Gamma_\delta(t)=\phi_{t}(z_i),
\end{align*}
and we extend it as a discontinuous periodic curve of period $\tau:=a+b$.\vspace{2pt}

\emph{Case 2}: For all $i\in\{1,2\}$, we have $\zeta_i(t)\in A_{i}$ for all $t>0$ large enough. For each $\delta>0$, there exist arbitrarily large $a_1,b_1,a_2,b_2>0$ such that the points $\phi_{-a_i}(z_i)$ and $\phi_{b_i}(z_i)$ are $\delta$-close. We consider the orbit segments 
\[\Gamma_{\delta,i}:[-a_i,b_i]\to SM,\qquad \Gamma_{\delta,i}(t)=\phi_{t}(z_i),\]
and we define $\Gamma_\delta$ to be the discontinuous periodic curve of period $\tau:=a_1+b_1+a_2+b_2$ obtained by extending periodically the concatenation of $\Gamma_{\delta,1}$ and $\Gamma_{\delta,2}$.\vspace{2pt}
 
In both cases, for each $\delta>0$ we obtained a periodic $\delta$-pseudo orbit $\Gamma_\delta$ of the geodesic flow with arbitrarily large minimal period. Moreover, the projection $\pi\circ\Gamma_\delta$ makes a $\delta$-jump from $A_1$ to $A_2$. The shadowing lemma from hyperbolic dynamics \cite[Th.~5.3.3]{Fisher:2019vz} implies that, for each $\epsilon>0$, there exist $\delta>0$ and $z\in SM$ such that the orbit $t\mapsto\phi_t(z)$ is periodic and pointwise $\epsilon$-close to $\Gamma_\delta$ up to a time reparametrization with Lipschitz constant $\epsilon$. Up to choosing $\epsilon>0$ small enough, the corresponding closed geodesic $\zeta(t):=\pi\circ\phi_t(z)$ must intersect $\gamma$ transversely.

Let $\KK'$ be the subcollection of those simple closed geodesics $\eta\in\KK$ that intersect $\zeta$, which is non-empty since it contains $\gamma$. The collection $\GG\cup\{\zeta\}$ is a complete system of  closed geodesics with limit subcollection $\KK\setminus\KK'$.
\end{proof}

\subsection{Proof of Theorems~\ref{mt:Birkhoff},~\ref{mt:Bangert},~\ref{mt:Kupka_Smale},~\ref{mt:broken_book}}

We first single out a suitable family of non-contractible waists, which is always available in every closed orientable Riemannian surface of positive genus.

\begin{Lemma}
\label{l:simple_closed_geodesics}
On any closed orientable Riemannian surface $(M,g)$ of genus $G\geq1$, there exist waists $\gamma_1,...,\gamma_{2G}$ such that:
\begin{itemize}
\item[(i)] $\gamma_i\cap\gamma_{i+1}$ is a singleton for all $i\in\{1,...,2G-1\}$,
\item[(ii)] $\gamma_i\cap \gamma_j=\varnothing$ if $|i-j|\geq2$,
\item[(iii)] $M\setminus(\gamma_1\cup...\cup\gamma_{2G})$ is simply connected,
\item[(iv)] every $\gamma_i$ is a waist.
\end{itemize}
\end{Lemma}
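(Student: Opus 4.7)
The plan is to start from a smooth topological chain of simple closed curves with the required combinatorics, and then replace each curve by a waist in its free homotopy class via curve shortening. I would first fix a smooth configuration $\alpha_1, \ldots, \alpha_{2G}$ of simple closed curves on $M$ satisfying (i)--(iii) as a purely topological matter: such a configuration exists on every closed orientable surface of genus $G$, cutting $M$ into a single $4G$-gon disk. An Euler-characteristic count confirms that $2G$ curves with the given intersection pattern necessarily have a unique complementary region, which is a disk.

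Next, for each $i$, I would set $\CC_i \subset \Emb(S^1, M)$ to be the connected component containing $\alpha_i$; since each $\alpha_i$ is non-separating, the loops in $\CC_i$ are non-contractible in $M$. Applying Lemma~\ref{l:convergence_scg} with $U = M$ (trivially weakly convex) yields a simple closed geodesic $\gamma_i \subset M$ of length $L(\gamma_i) = \inf_{\zeta \in \CC_i} L(\zeta)$. Lemma~\ref{l:Bangert} then forces $\gamma_i$ to be without conjugate points, for otherwise one could construct shorter embedded loops in $\CC_i$. A simple closed geodesic on an orientable surface without conjugate points is a waist (cf.\ Section~\ref{ss:waist}), which gives (iv).

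The most delicate step is verifying the intersection pattern. The key observation is that two distinct waists $\gamma_i, \gamma_j$ cannot bound an embedded bigon: swapping the two boundary arcs of such a bigon would produce new piecewise smooth simple closed curves in the same free homotopy classes of $\gamma_i$ and $\gamma_j$, of the same total length but with two corners of interior angle strictly less than $\pi$; smoothing the corners would strictly decrease the total length, contradicting the minimality of at least one of $\gamma_i, \gamma_j$ in its $\CC$-component. By the classical bigon criterion in surface topology, the absence of bigons forces $|\gamma_i \cap \gamma_j|$ to equal the geometric intersection number $i(\alpha_i, \alpha_j)$, which by construction is $1$ for consecutive pairs and $0$ otherwise; this establishes (i) and (ii). Property (iii) then follows from the change-of-coordinates principle in surface topology: two families of simple closed curves with the same free homotopy classes and the same minimal intersection pattern are ambient isotopic, so $M \setminus \bigcup_i \gamma_i$ is homeomorphic to $M \setminus \bigcup_i \alpha_i$, which is a disk.

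The main obstacle I anticipate is making the bigon-surgery-and-smoothing step fully rigorous when two waists may be tangent at some intersection or share a common subarc. Ruling out such pathologies requires extending the surgery argument to piecewise smooth curves and verifying that the corner-smoothing operation strictly decreases length in a homotopy-preserving way; one may need a preliminary genericity or approximation step to reduce to transverse intersections.
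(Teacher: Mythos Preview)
Your approach is essentially the paper's: start from a topological chain $\alpha_1,\dots,\alpha_{2G}$, minimize length in each component $\CC_i\subset\Emb(S^1,M)$ via Lemma~\ref{l:convergence_scg}, rule out bigons by length-minimality, and invoke the bigon criterion to recover the intersection pattern. Two small remarks. First, your route to~(iv) via ``no conjugate points $\Rightarrow$ waist'' is not quite right as stated: that implication requires non-degeneracy (Section~\ref{ss:conjugate_points}), which you do not have. The waist property instead follows directly from global length-minimality of $\gamma_i$ in its free homotopy class (a length minimizer among simple closed curves in a primitive class on a closed surface also minimizes among all curves in that class). Second, your anticipated obstacle is a non-issue: two \emph{distinct} closed geodesics cannot be tangent or share a subarc, by uniqueness for the geodesic equation, so all intersections are automatically transverse and the bigon surgery goes through without any approximation step. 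For~(iii) the paper cuts open explicitly rather than invoking the change-of-coordinates principle, but your argument is equally valid.
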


\begin{proof}
We first consider non-contractible embedded loops $\zeta_1,...,\zeta_{2G}\subset M$ such that $\zeta_i\cap\zeta_{i+1}$ is a singleton for all $i\in\{1,...,2G-1\}$, $\zeta_i\cap \zeta_j=\varnothing$ if $|i-j|\geq2$, and $M\setminus(\zeta_1\cup...\cup\zeta_{2G})$ is simply connected (Figure~\ref{f:genus_G}). 
We denote by $\CC_i\subset\Emb(S^1,M)$ the connected component containing $\zeta_i$. Notice that the $\CC_i$'s are pairwise distinct, and the embedded loops $\zeta_1,...,\zeta_{2G}$ are in minimal position, i.e.
\[
\#(\eta_i\cap\eta_j) \geq \#(\zeta_i\cap\zeta_j),\qquad\forall i<j,\ \eta_i\in\CC_i,\ \eta_j\in\CC_j.
\]
By Lemma~\ref{l:convergence_scg}, for each $i=1,...,2G$, there exists a waist $\gamma_i\in\CC_i$ such that
\begin{align}
\label{e:min_scg}
L(\gamma_i)=\min_{\gamma\in\CC_i} L(\gamma).
\end{align}

\begin{figure}
\includegraphics{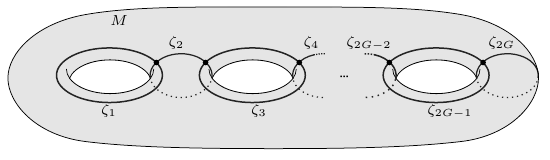}
\caption{The embedded loops $\zeta_1,...,\zeta_{2G}$.}
\label{f:genus_G}
\end{figure}

We recall that a geodesic bigon is a compact disk $D\subset M$ whose boundary is the union of two geodesic arcs. For each $i<j$, there is no geodesic bigon bounded by an arc in $\alpha_i\subset\gamma_i$ and an arc $\alpha_j\subset\gamma_j$. Indeed, if there were such a geodesic bigon with $L(\alpha_i)\leq L(\alpha_j)$, we could further shrink the simple closed geodesics $\gamma_j$, and obtain a shorter embedded loop in the same homotopy class $\CC_j$, contradicting~\eqref{e:min_scg}. By the bigon criterium \cite[Prop.~1.7]{Farb:2012ws}, the waists $\gamma_1,...,\gamma_{2G}$ are in minimal position. Therefore 
\[
\#(\gamma_i\cap\gamma_j) = \#(\zeta_i\cap\zeta_j),\qquad\forall i<j.
\]

In order to prove that $B:=M\setminus(\gamma_1\cup...\cup\gamma_{2G})$ is simply connected, we cut open the surface $M$ at the simple closed geodesics $\gamma_2,\gamma_4,...,\gamma_{2G}$, thus obtaining a compact surface $M'$ of genus zero with $2G$ boundary components. Each waist $\gamma_{2i}$, with $i=1,...,G$, corresponds to two boundary components $\gamma_{2i}^+,\gamma_{2i}^-\subset\partial M'$. Each waist $\gamma_{2i+1}$, with $i=1,...,G-1$, splits into two embedded arcs $\gamma_{2i+1}^+,\gamma_{2i+1}^-$; the arc $\gamma_{2i+1}^\pm$ joins the boundary components $\gamma_{2i}^\pm$ and $\gamma_{2i+2}^\pm$. Finally, the waist $\gamma_1$ corresponds to an embedded arc $\gamma_1^+$ joining the boundary components $\gamma_2^+$ and $\gamma_2^-$. We now cut open $M'$ along the embedded arcs $\gamma_{2i+1}^\pm$, obtaining a compact disk $M''$. The interior $\interior(M'')$ is diffeomorphic to $B$.
\end{proof}

By means of Fried surgery, we produce a connected surface of section out of the union of the Birkhoff annuli of the waists given by Lemma~\ref{l:simple_closed_geodesics}.

\begin{Lemma}
\label{l:Sigma}
Let $\gamma_1,...,\gamma_{2G}$ be the waists provided by Lemma~\ref{l:simple_closed_geodesics}, and consider the union of the associated Birkhoff annuli
\begin{align}
\label{e:Upsilon}
 \Upsilon:=\bigcup_{i=1}^{2G}   \big(A(\dot\gamma_i)\cup A(-\dot\gamma_i)\big).
\end{align}
For each open neighborhood $V\subset SM$ of $\Upsilon$, there exists a surface of section \[\iota:\Sigma\looparrowright V\] with the following properties:
\begin{itemize}

\item[(i)] $\Sigma$ is a compact connected surface of genus one and $8G-4$ boundary components.\vspace{2pt}

\item[(ii)] The restriction 
\[\iota|_{\partial\Sigma}:\partial\Sigma\to\bigcup_{i=1}^{2G}(\dot\gamma_i\cup-\dot\gamma_i)\]
is a double cover. The preimages $\iota^{-1}(\dot\gamma_i)$ and $\iota^{-1}(-\dot\gamma_i)$ are connected if $i\in\{1,2G\}$, and have two connected components each if $G>1$ and $2\leq i\leq 2G-1$.\vspace{2pt}

\item[(iii)] There exist $\ell>0$ such that, for each $z\in SM$ sufficiently close to $\partial\Sigma$, we have $\phi_{[-\ell,0)}(z)\cap\Sigma\neq\varnothing$ and $\phi_{(0,\ell]}(z)\cap\Sigma\neq\varnothing$.\vspace{2pt}

\item[(iv)] For each $z\in\Upsilon$ we have $\phi_{(-1,1)}(z)\cap\Sigma\neq\varnothing$, and for each $z'\in\Sigma$ we have $\phi_{(-1,1)}(z')\cap\Upsilon\neq\varnothing$.

\end{itemize}
\end{Lemma}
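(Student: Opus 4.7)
The plan is to form the union of Birkhoff annuli
\[
\Upsilon := \bigcup_{i=1}^{2G} \bigl( A(\dot\gamma_i) \cup A(-\dot\gamma_i) \bigr)
\]
and to apply the surgeries of Section~\ref{ss:Fried} in arbitrarily small neighborhoods of its self-intersection locus inside $V$. This locus has two kinds of pieces: the fibers $S_{x_j}M$ over the intersection points $x_j := \gamma_j \cap \gamma_{j+1}$, for $j = 1, \ldots, 2G - 1$, which are circles of double points in $\interior(\Upsilon)$; and the isolated boundary intersections $\pm\dot\gamma_i(x_j) \in \partial A(\pm\dot\gamma_i) \cap \interior A(\pm\dot\gamma_j)$ where a boundary circle of one Birkhoff annulus pierces the interior of another. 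Performing the Fried surgeries of Figures~\ref{f:surgery1} and~\ref{f:surgery2} then yields the desired immersed surface of section $\iota : \Sigma \looparrowright V$. Properties (iii) and (iv) follow immediately from the general discussion in Section~\ref{ss:Fried}: outside a small tubular neighborhood of the surgery locus $\Sigma$ and $\Upsilon$ coincide, so every point of one is reached from a point of the other in time less than $1$ along the geodesic flow; and for $\ell > \max_i L(\gamma_i)$ every point close enough to $\partial\Sigma = \partial\Upsilon$ has an orbit segment of length $\ell$ crossing $\Upsilon$, hence $\Sigma$, both forward and backward in time.

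For property (ii) I track the abstract lifts of the boundary circles. Each circle $\pm\dot\gamma_i \subset SM$ is the shared boundary of $A(\dot\gamma_i)$ and $A(-\dot\gamma_i)$, and so carries two lifts in the source of $\Upsilon$; the local model of Figure~\ref{f:surgery2} reconnects these two lifts by a single swap of sides at each boundary intersection along $\pm\dot\gamma_i$. Since $\gamma_1$ and $\gamma_{2G}$ meet exactly one other geodesic in the collection, a single swap occurs on each of $\dot\gamma_i$ and $-\dot\gamma_i$ for $i \in \{1, 2G\}$, and the two lifts merge into one circle doubly covering $\pm\dot\gamma_i$. For $1 < i < 2G$ the geodesic $\gamma_i$ meets both $\gamma_{i-1}$ and $\gamma_{i+1}$, yielding two swaps on each of $\pm\dot\gamma_i$; after an even number of swaps the lift returns to its initial side, so $\iota^{-1}(\pm\dot\gamma_i)$ splits into two disjoint circles each mapping homeomorphically onto its image. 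Summing gives $\#\pi_0(\partial\Sigma) = 2 \cdot 2 + 4(2G - 2) = 8G - 4$.

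To pin down the genus in property (i), I would use the CW decomposition of $\Sigma$ whose $2$-cells are the pieces obtained by cutting each Birkhoff annulus $A(\pm\dot\gamma_i)$ along the boundary-to-boundary arcs $A(\pm\dot\gamma_i) \cap S_{x_j}M$. This gives $1$ disk per annulus if $i \in \{1, 2G\}$ (a single cut) and $2$ disks per annulus if $1 < i < 2G$ (two disjoint cuts), for a total of $8G - 4$ disks as $2$-cells. A careful enumeration of the $1$-cells (the Fried-glued interior edges together with the boundary arcs separating the cut points on each $\pm\dot\gamma_i$) and of the $0$-cells then yields $\chi(\Sigma) = 4 - 8G$; combined with $b = 8G - 4$ boundary components, the identity $\chi = 2 - 2g - b$ forces $g = 1$. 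Connectedness of $\Sigma$ follows from the fact that the intersection graph of $\gamma_1, \ldots, \gamma_{2G}$ is the connected chain $\gamma_1 - \gamma_2 - \cdots - \gamma_{2G}$: each interior surgery at a fiber $S_{x_j}M$ links pieces of $A(\pm\dot\gamma_j)$ with pieces of $A(\pm\dot\gamma_{j+1})$, and induction on $j$ propagates connectedness across the whole collection.

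The main technical obstacle is the combinatorial bookkeeping of the interior surgery at each fiber $S_{x_j}M$, where four Birkhoff annuli meet and the regluing pattern is dictated by the orientation of the geodesic flow. This must be tracked carefully and consistently across all $2G - 1$ circles of double points to obtain the correct Euler characteristic and connectedness. The computation is modeled on Fried's original argument in~\cite{Fried:1983uj}, adapted to the slightly richer setting in which several intersection points may lie on a single geodesic $\gamma_i$.
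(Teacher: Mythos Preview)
Your proposal is correct and follows essentially the same approach as the paper: Fried surgery on the union of Birkhoff annuli, analysis of the boundary components via the intersection pattern of the $\gamma_i$ (one swap for $i\in\{1,2G\}$, two for the others), connectedness by induction along the chain $\gamma_1-\gamma_2-\cdots-\gamma_{2G}$, and an Euler-characteristic computation giving $\chi(\Sigma)=4-8G$. The only cosmetic difference is in the cell decomposition used for $\chi$: the paper triangulates each annulus with $6$ vertices, $14$ edges, $8$ faces and then identifies $8G-4$ vertices under the surgery, whereas you cut each annulus along the fiber arcs into $8G-4$ disks total and count edges and vertices from there; both routes land on the same number.
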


\begin{proof}
We see $\Upsilon$ as a non-injectively immersed surface in $SM$ with boundary and interior
\[\partial\Upsilon=\bigcup_{i=1}^{2G} \big(\dot\gamma_i\cup-\dot\gamma_i\big),
\qquad
\interior(\Upsilon)=\bigcup_{i=1}^{2G}\big(\interior(A_i^+)\cup\interior(A_i^-)\big),
\] 
where $A_i^\pm:=A(\pm\dot\gamma_i)$. We set
 $\ell_0:=\max\{L(\gamma_1),...,L(\gamma_{2G})\}$. Since $\gamma_i$ intersects $\gamma_{i-1}$ and $\gamma_{i+1}$ for all $2\leq i\leq2G-1$, we have $\phi_{[2,\ell_0+2]}(z)\cap\interior(\Upsilon)\neq\varnothing$ for all $z\in\partial\Upsilon$. Therefore, there exists an open neighborhood $N\subset SM$ of $\partial\Upsilon$ such that
\begin{align}
\label{e:bounded_return_near_boundary_Upsilon}
\phi_{[1,\ell_0+3]}(z)\cap\interior(\Upsilon)\neq\varnothing,\qquad\forall z\in N.
\end{align}

Let $V\subset SM$ be an open neighborhood of $\Upsilon$. We apply Fried surgeries in an arbitrarily small neighborhood of the self-intersection points of $\Upsilon$, as explained in Section~\ref{ss:Fried}, and obtain a surface of section $\iota:\Sigma\looparrowright V$ with the same boundary $\partial\Sigma=\partial\Upsilon$ and satisfying point~(iv) in the statement of the lemma. This, together with~\eqref{e:bounded_return_near_boundary_Upsilon}, implies point (iii) for $\ell:=\ell_0+4$.

We claim that the interior $\interior(\Sigma)$ is path-connected. Indeed near the point in $\interior(A_1^+)\cap \partial A_2^+\cap \partial A_2^-$, Fried surgery glues together the intersecting annuli $A_1^+$, $A_2^-$, and $A_2^+$ in the same connected component of $\interior(\Sigma)$, as is clear from Figures~\ref{f:singular_sos} and~\ref{f:surgery2}; analogously, near the point in $\interior(A_1^-)\cap \partial A_2^+\cap \partial A_2^-$, Fried surgery glues together the intersecting annuli $A_1^-$, $A_2^-$, and $A_2^+$.
 Therefore, the surgery sends the four annuli $A_1^+$, $A_1^-$, $A_2^+$, and $A_2^-$ to the same path-connected component of $\interior(\Sigma)$. Assume by induction that Fried surgery sends the annuli $A_1^+,A_1^-,...,A_{i}^+,A_{i}^-$ to the same path-connected component $W\subset\interior(\Sigma)$. Near the point in $A_i^+\cap \partial A_{i+1}^+\cap \partial A_{i+1}^-$, Fried surgery glues together the intersecting annuli $A_{i}^+$, $A_{i+1}^-$, and $A_{i+1}^+$. Therefore, the surgery sends $A_{i+1}^-$ and $A_{i+1}^+$ to the connected component $W$ as well, and we conclude that $W=\interior(\Sigma)$.

\begin{figure}
\includegraphics{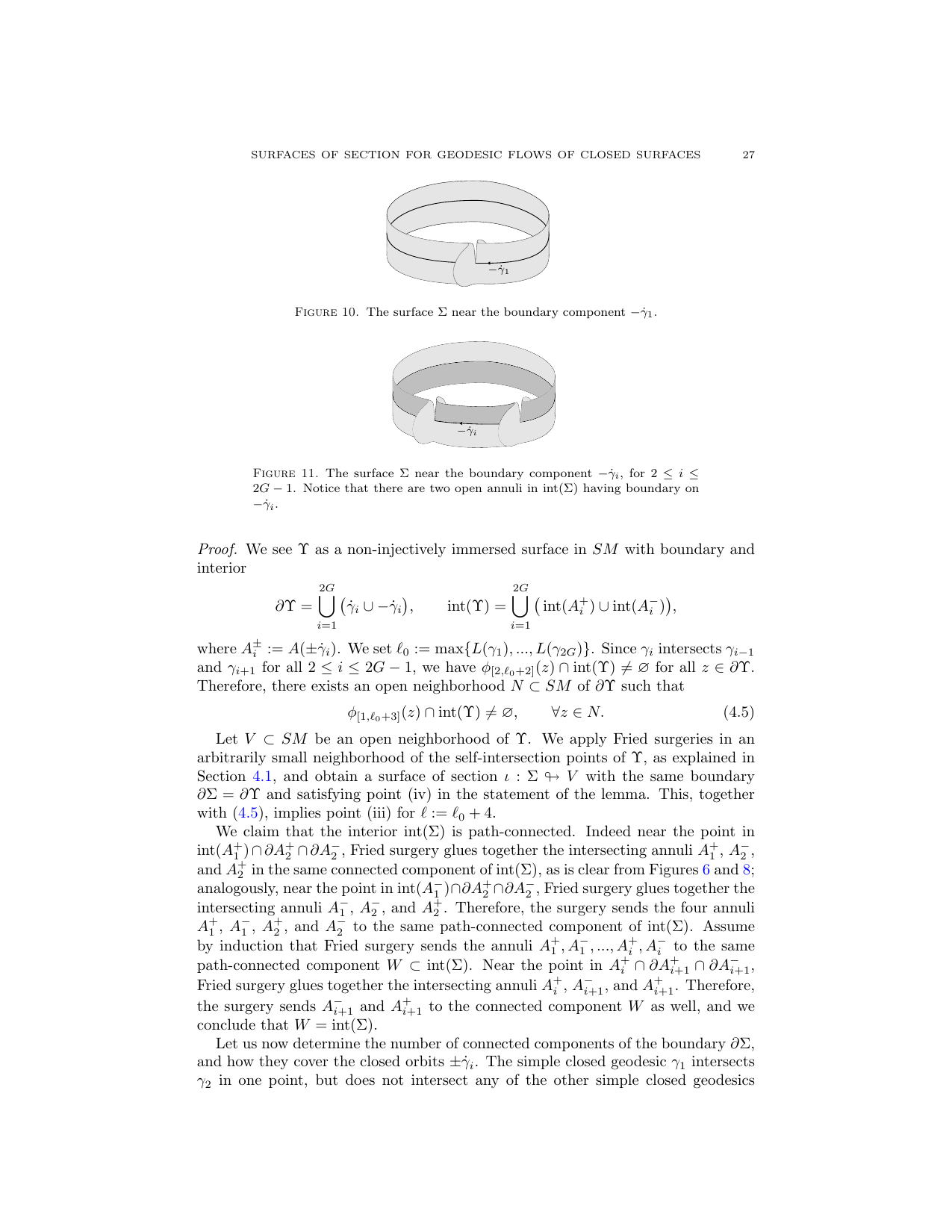}
\caption{The surface $\Sigma$ near the boundary component $-\dot\gamma_1$.}
\label{f:boundary1}
\end{figure}

\begin{figure}
\includegraphics{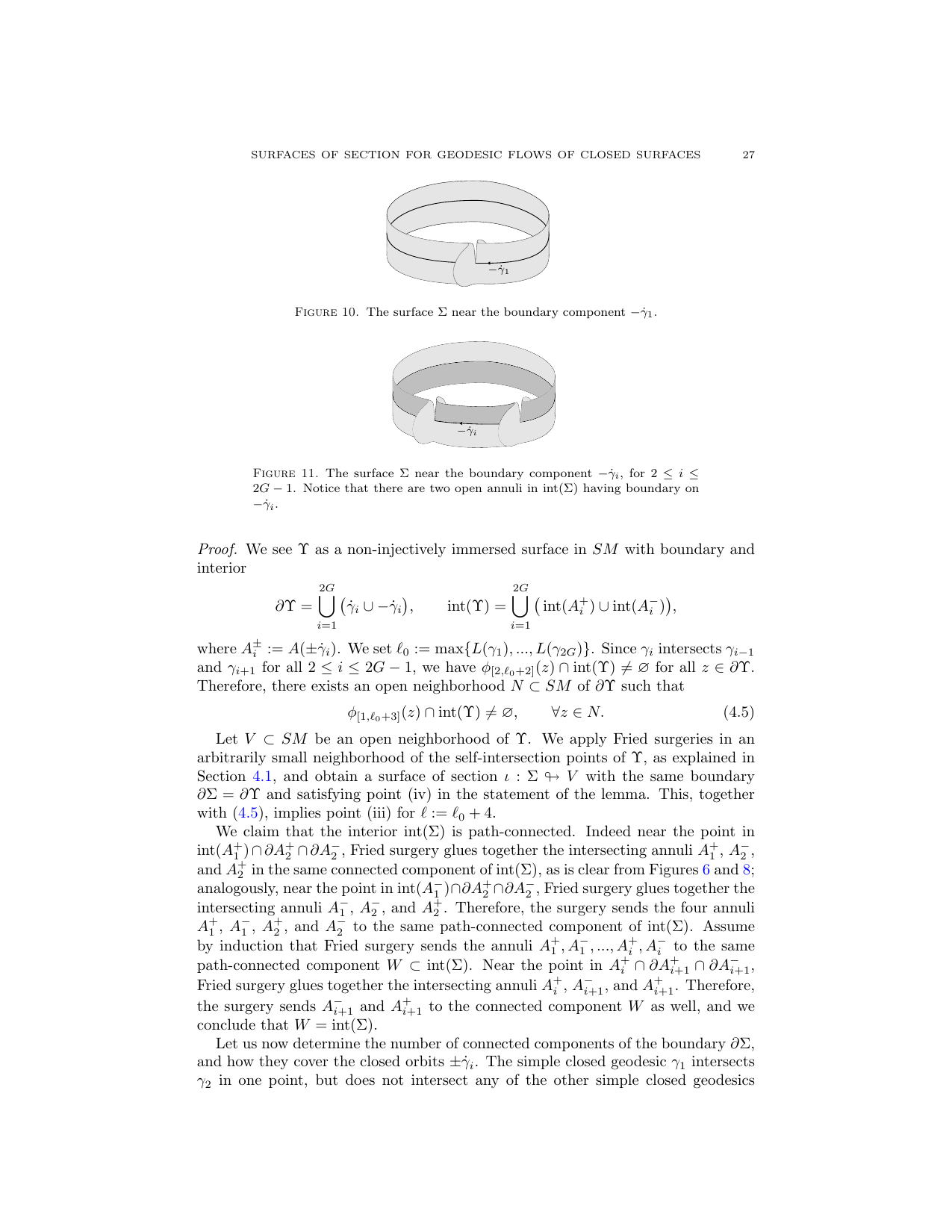}
\caption{The surface $\Sigma$ near the boundary component $-\dot\gamma_i$, for $2\leq i\leq2G-1$. Notice that there are two open annuli in $\interior(\Sigma)$ having boundary on $-\dot\gamma_i$.}
\label{f:boundary2}
\end{figure}

Let us now determine the number of connected components of the boundary $\partial\Sigma$, and how they cover the closed orbits $\pm\dot\gamma_i$. The simple closed geodesic $\gamma_1$ intersects $\gamma_2$ in one point, but does not intersect any of the other simple closed geodesics $\gamma_3,...,\gamma_{2G}$; therefore $\dot\gamma_1$ intersects the interior of the Birkhoff annulus $\interior(A_2^+)$ in one point, but does not intersect any of the other Birkhoff annuli $A_i^\pm$. Analogously, $-\dot\gamma_1$ intersects $\interior(A_2^-)$ in one point but none of the other $A_i^\pm$. Therefore, there is a unique connected component of $\partial\Sigma$ that covers $\dot\gamma_1$, and such connected component winds around $\dot\gamma_1$ twice; analogously, there is a unique connected component of $\partial\Sigma$ that covers $-\dot\gamma_1$, and such connected component winds around $-\dot\gamma_1$ twice (Figure~\ref{f:boundary1}). 
The same conclusions hold for $\gamma_{2G}$.
For each $i\in\{2,...,2G-1\}$,  $\gamma_i$ intersects $\gamma_{i-1}$ and $\gamma_{i+1}$ in one point each, but does not intersect any other $\gamma_j$; therefore, $\dot\gamma_i$ intersects the interiors of the Birkhoff annuli $\interior(A_{i-1}^-)$ and $\interior(A_{i+1}^+)$ in one point each, but does not intersect any of the other Birkhoff annuli $A_j^\pm$; analogously, $-\dot\gamma_i$ intersects $A_{i-1}^+$ and $A_{i+1}^-$ in one point each, but does not intersect any other $A_j^\pm$. Therefore, there are two connected components of $\partial\Sigma$ that are mapped diffeomorphically to $\dot\gamma_i$, and two other connected components of $\partial\Sigma$ that are mapped diffeomorphically to $-\dot\gamma_i$ (Figure~\ref{f:boundary2}). All together, $\partial\Sigma$ has $8G-4$ boundary components.

\begin{figure}
\includegraphics{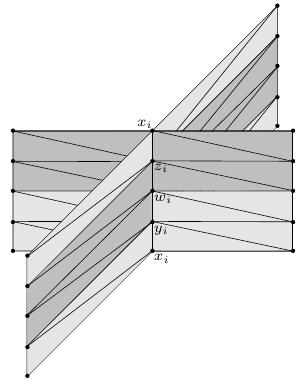}
\caption{Triangulation of $A_i^+\cup A_i^-\cup A_{i+1}^+\cup A_{i+1}^-$.}
\label{f:triangulation}
\end{figure}

Since $\interior(\Sigma)$ is transverse to the geodesic vector field $X$, $\Sigma$ is orientable. Indeed, the Liouville contact form $\lambda$ of $SM$ induces an area form $d\lambda|_{\interior(\Sigma)}$.
In order to compute the genus of $\Sigma$, we first triangulate every Birkhoff annulus $A_i^\pm$ with 6 vertices, 14 edges, and 8 faces. We choose such triangulations so that, along the intersections $A_i^+\cap A_i^-$, $A_i^\pm\cap A_{i+1}^\pm$, and $A_i^\pm\cap A_{i+1}^\mp$, the vertices and edges of $A_{i}^\pm$ and $A_{i+1}^\pm$ match as in Figure~\ref{f:triangulation}. All together, $\Upsilon$ is triangulated with $24\,G$ vertices, $56\,G$ edges, and $32\,G$ faces (once again, we stress that $\Upsilon$ must be seen as an immersed compact surface with boundary, and therefore distinct Birkhoff annuli do not share vertices, edges, or faces). The same number of edges and faces can be used to triangulate $\Sigma$ as well; however, the triangulation of $\Upsilon$ provides one extra vertex for each of the points $x_i,y_i,w_i,z_i$ depicted in Figure~\ref{f:triangulation}, where $i=1,...,2G-1$, and therefore we need to throw away $8\,G-4$ vertices. All together, $\Sigma$ is triangulated with $16\,G+4$ vertices, $56\,G$ edges, and $32\,G$ faces, and therefore has Euler characteristic
\begin{align*}
 \chi(\Sigma)=16\,G+4 - 56\,G + 32\,G = -8\,G + 4.
\end{align*}
Since 
\begin{align*}
-8\,G+4=\chi(\Sigma)=2-2\,\mathrm{genus}(\Sigma) - \#\pi_0(\partial\Sigma)=2-2\,\mathrm{genus}(\Sigma) - 8\,G + 4,
\end{align*}
we conclude that $\Sigma$ has genus one.
\end{proof}

In the following, we shall  employ the notion of complete system of closed geodesics, which we introduced in Definition~\ref{d:complete_system}.
Theorem~\ref{mt:Birkhoff} is a consequence of the following statement.

\begin{Thm}
\label{t:Birkhoff}
Let $(M,g)$ be a closed oriented Riemannian surface of positive genus,  $\gamma_1,...,\gamma_{2G}$ the waists provided by Lemma~\ref{l:simple_closed_geodesics}, and $\Sigma$ the surface of section provided by Lemma~\ref{l:Sigma}. If the open disk $M\setminus(\gamma_1\cup...\cup\gamma_{2G})$ does not contain any simple closed geodesic without conjugate points, then $\Sigma$ is a Birkhoff section.
\end{Thm}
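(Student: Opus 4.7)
The plan is to recognize $\GG := \{\gamma_1, \dots, \gamma_{2G}\}$, with empty limit subcollection $\KK := \varnothing$, as a complete system of simple closed geodesics in the sense of Definition~\ref{d:complete_system}. Lemma~\ref{l:empty_limit_subcollection} will then yield a uniform return estimate to the union $\Upsilon$ of the Birkhoff annuli, and Lemma~\ref{l:Sigma}(iv) will convert it into a return estimate to $\Sigma$, showing that $\Sigma$ is a Birkhoff section.

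The first step is to set up the convexity framework. By Lemma~\ref{l:simple_closed_geodesics}(iii) the open set $U := M \setminus (\gamma_1 \cup \dots \cup \gamma_{2G})$ is simply connected, hence an open disk whose boundary is a piecewise geodesic with corners at the transverse intersections $\gamma_i \cap \gamma_{i+1}$. This is exactly the fundamental-domain-style convex geodesic polygon highlighted in Example~\ref{ex:polygon}, so $U$ satisfies Assumption~\ref{a:convexity}. By our standing hypothesis $U$ contains no simple closed geodesic without conjugate points, and Theorem~\ref{t:scg1} therefore yields
\begin{equation*}
\trap_+(SU) = \trap_-(SU) = \varnothing.
\end{equation*}

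Next I verify the three clauses of Definition~\ref{d:complete_system}. Clause~(i) is vacuous because $\KK$ is empty. Clause~(ii) holds because the identities $\Ws(K) \setminus K = \Wu(K) \setminus K = \varnothing$ match the trapped sets just computed, and the vanishing of $\trap_+(SU)$ in particular forbids any complete orbit from being entirely contained in $SU$. Clause~(iii), applied to $R = \bigcup_{i=1}^{2G}(\dot\gamma_i \cup -\dot\gamma_i)$, is essentially the estimate used at the start of the proof of Lemma~\ref{l:Sigma}: each $\gamma_i$ meets at least one neighbor $\gamma_{i\pm 1}$ transversely, so for every $z \in \pm\dot\gamma_i$ the orbit segment $\phi_{[0,\,L(\gamma_i)+1]}(z)$ crosses one of the $\gamma_j$ transversely and thus leaves $SU$; by openness of transversality this property persists on an open neighborhood of $R$ in $SM$ with a uniform upper bound on the exit time.

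Having established that $\GG$ is a complete system with empty limit subcollection, Lemma~\ref{l:empty_limit_subcollection} produces a constant $\ell' > 0$ such that $\phi_{[0,\ell']}(z) \cap \Upsilon \neq \varnothing$ for every $z \in SM$. Combining this with Lemma~\ref{l:Sigma}(iv), which for every $w \in \Upsilon$ provides an $s \in (-1,1)$ with $\phi_s(w) \in \Sigma$, I conclude that every orbit segment of length $\ell' + 2$ meets $\Sigma$; this is precisely the defining property of a Birkhoff section. The main conceptual hurdle is the verification of clause~(iii) of the complete system, since Theorem~\ref{t:scg1} only yields the trapped-set information of clause~(ii), and the near-$R$ return estimate has to be extracted separately from the transverse self-intersection structure of $\Upsilon$.
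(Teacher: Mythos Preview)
Your proof is correct and follows essentially the same approach as the paper: identify $B=M\setminus(\gamma_1\cup\dots\cup\gamma_{2G})$ as a convex geodesic polygon via Example~\ref{ex:polygon}, apply Theorem~\ref{t:scg1} to obtain empty trapped sets, recognize $\{\gamma_1,\dots,\gamma_{2G}\}$ as a complete system with empty limit subcollection, invoke Lemma~\ref{l:empty_limit_subcollection}, and finish with Lemma~\ref{l:Sigma}(iv). Your explicit verification of clause~(iii) of Definition~\ref{d:complete_system} is a bit more detailed than the paper's, which simply asserts the complete-system property, but the underlying argument is the same.
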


\begin{proof}
With the terminology of Example~\ref{ex:polygon}, the open disk $B:=M\setminus(\gamma_1\cup...\cup\gamma_{2G})$ is a convex geodesic polygon, and in particular satisfies Assumption~\ref{a:convexity}. Since $B$ does not contain any simple closed geodesic without conjugate points, Theorem~\ref{t:scg1} implies that we have empty trapped sets $\trap_+(SB)=\trap_-(SB)=\varnothing$. Therefore, the collection of waists $\{\gamma_1,...,\gamma_{2G}\}$ is a complete system of closed geodesics with empty limit subcollection.
Let $\Upsilon$ be the union of the Birkhoff annuli of the waists $\gamma_1,...,\gamma_{2G}$, as in~\eqref{e:Upsilon}. By Lemma~\ref{l:empty_limit_subcollection}, there exists $\ell>0$ such that, for each $z\in SM$, the orbit segment $\phi_{[0,\ell]}(z)$ intersects $\Upsilon$. This, together with Lemma~\ref{l:Sigma}(iv), implies that for each $z\in SM$ the orbit segment $\phi_{(0,\ell+2)}(z)$ intersects $\Sigma$. Therefore, $\Sigma$ is a Birkhoff section.
\end{proof}

The proof of Theorem~\ref{mt:Bangert} is analogous. We rewrite the statement for the reader's convenience.\vspace{7pt}

\noindent\textbf{Theorem \ref{mt:Bangert}.}
\emph{Let $(S^2,g)$ be a Riemannian 2-sphere, and $\gamma$ a simple closed geodesic with conjugate points whose complement $S^2\setminus\gamma$ does not contain simple closed geodesics without conjugate points. Then both Birkhoff annuli $A(\dot\gamma)$ and $A(-\dot\gamma)$ are Birkhoff sections.} 

\begin{proof}
Both connected components $B_1$ and $B_2$ of the complement $S^2\setminus\gamma$ satisfy the convexity Assumption~\ref{a:convexity}. Since $B_1$ and $B_2$ do not contain any simple closed geodesic without conjugate points, Theorem~\ref{t:scg1} implies that we have empty trapped sets 
$\trap_\pm(S(B_1\cup B_2))=\varnothing$.
Therefore, $\{\gamma\}$ is a complete system of closed geodesics with empty limit subcollection. By Lemma~\ref{l:empty_limit_subcollection}, there exists $\ell>0$ such that any geodesic segment of length $\ell$ intersects $\gamma$. This implies that both Birkhoff annuli $A(\dot\gamma)$ and $A(-\dot\gamma)$ are Birkhoff sections.
\end{proof}

We recall that a consequence of Lusternik-Schnirelmann theorem is that every Riemannian 2-sphere admits a simple closed geodesic with conjugate points (see Remark~\ref{r:S2}).

\begin{Lemma}
\label{l:existence_complete_system}
Let $(M,g)$ be a closed connected oriented Riemannian surface all of whose contractible simple closed geodesics without conjugate points are non-degenerate. If $M\neq S^2$, we consider the collection of waists $\GG'=\{\gamma_1,...,\gamma_{2G}\}$ given by Lemma~\ref{l:simple_closed_geodesics}; if instead $M=S^2$, we set $\GG'=\{\gamma\}$, where $\gamma$ is any simple closed geodesic with conjugate points. There exists a possibly empty finite collection $\GG''$ of contractible simple closed geodesics that are pairwise disjoint and disjoint from all the closed geodesics in $\GG'$, such that $\GG:=\GG'\cup\GG''$ is a complete system of  closed geodesics whose limit subcollection $\KK$ is given by the waists in $\GG''$.
\end{Lemma}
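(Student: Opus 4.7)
The plan is to apply Theorems~\ref{t:scg1} and~\ref{t:scg2} to each connected component of the complement $M\setminus\bigcup_{\gamma\in\GG'}\gamma$, and take $\GG''$ to be the union of the nested families they produce. First I would verify the convexity hypothesis. If $M\neq S^2$, Lemma~\ref{l:simple_closed_geodesics}(iii) tells me the complement is a simply connected open set $B$ (hence an open disk), whose boundary is an immersed piecewise geodesic circle whose corners, occurring at transverse intersections $\gamma_i\cap\gamma_{i+1}$, all have inner angle strictly less than $\pi$; so $B$ is a convex geodesic polygon and Example~\ref{ex:polygon} verifies Assumption~\ref{a:convexity}. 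If $M=S^2$, the complement $S^2\setminus\gamma$ has two components, each bounded by the simple closed geodesic $\gamma$ with conjugate points, and Example~\ref{ex:conjugate_points} applies to each.

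Next I would build $\GG''$ component by component. Let $B_1,\ldots,B_m$ denote the components of $M\setminus\bigcup_{\gamma\in\GG'}\gamma$. If $B_j$ contains no closed geodesic, I add nothing coming from $B_j$ and note that Theorem~\ref{t:scg1} yields $\trap_\pm(SB_j)=\varnothing$. Otherwise, every contractible simple closed geodesic without conjugate points is non-degenerate by our global hypothesis (such geodesics inside $B_j$ are contractible in $M$), so Theorem~\ref{t:scg2} applies and produces pairwise disjoint nested simple closed geodesics $\gamma^{(j)}_1,\ldots,\gamma^{(j)}_{2k_j}\subset B_j$ alternating between waists (odd $i$) and geodesics with conjugate points (even $i$). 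I would let $\GG''$ be the union of all these geodesics over $j$, let $\KK\subset\GG''$ be the subcollection consisting of the waists, and set $\GG:=\GG'\cup\GG''$. All elements of $\GG''$ are contractible, and by construction and Theorem~\ref{t:scg2}(i) are pairwise disjoint and disjoint from $\GG'$.

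Then I would verify the three conditions of Definition~\ref{d:complete_system}. Condition (i) is immediate from the non-degeneracy of contractible waists and the disjointness just noted. For condition (ii), write $U:=M\setminus\bigcup_{\gamma\in\GG}\gamma=\bigsqcup_j (B_j\cap U)$; since any orbit eventually in $SU$ must stay in a single connected component, $\trap_+(SU)=\bigsqcup_j\trap_+(S(B_j\cap U))$, and Theorem~\ref{t:scg2}(iv) identifies each summand with $\Ws(K_j)\setminus K_j$, where $K_j=\bigcup_{\gamma\in\KK,\,\gamma\subset B_j}(\dot\gamma\cup-\dot\gamma)$. Summing over $j$ gives $\trap_+(SU)=\Ws(K)\setminus K$, and symmetrically in the backward direction; the absence of a complete orbit in $SU$ comes from Theorem~\ref{t:scg2}(v) in each $B_j$.

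The main obstacle is condition (iii), which I would handle by splitting $\GG\setminus\KK$ into two types. Even-indexed geodesics in the Theorem~\ref{t:scg2} families, together with $\gamma\in\GG'$ in the case $M=S^2$, all have conjugate points, so Corollary~\ref{c:intersecting}(i) directly provides an open neighborhood and a uniform exit time. The waists $\gamma_1,\ldots,\gamma_{2G}\in\GG'$ in the case $M\neq S^2$ are the delicate part, since they have no conjugate points. Here I would exploit Lemma~\ref{l:simple_closed_geodesics}(i): each such $\gamma_i$ transversely intersects at least one of $\gamma_{i\pm 1}$. A geodesic $\zeta$ whose initial unit velocity is sufficiently $C^1$-close to some $\dot\gamma_i(s)$ remains uniformly $C^0$-close to $\gamma_i$ over one full period, so by transversality of the intersection with the neighbor it must cross that neighbor within uniformly bounded time. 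Taking the maximum of these bounds over the finite collection $\GG\setminus\KK$ produces the required $\ell>0$.
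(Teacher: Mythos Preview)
Your proof is correct and follows essentially the same approach as the paper: verify Assumption~\ref{a:convexity} for each component of the complement via Examples~\ref{ex:polygon} and~\ref{ex:conjugate_points}, apply Theorems~\ref{t:scg1} or~\ref{t:scg2} according to whether the component contains closed geodesics, and verify condition~(iii) of Definition~\ref{d:complete_system} by invoking Corollary~\ref{c:intersecting} for the geodesics with conjugate points and the transverse intersections $\gamma_i\cap\gamma_{i\pm1}$ for the non-contractible waists. The paper treats the cases $M=S^2$ and $M\neq S^2$ separately while you handle them uniformly through the components $B_1,\ldots,B_m$, but the substance is identical.
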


\begin{proof}
 Assume first that $M$ has genus $G\geq1$, so that $\GG'=\{\gamma_1,...,\gamma_{2G}\}$. The complement $B\setminus(\gamma_1\cup...\cup\gamma_{2G})$ satisfies the convexity Assumption~\ref{a:convexity}. 
Since every closed geodesic $\gamma\in\GG'$ intersects some other closed geodesic in $\GG'$, if we fix $\ell> L(\gamma)$, for every $z\in SM$ sufficiently close to $\dot\gamma\cup-\dot\gamma$, the orbit segment $\phi_{(0,\ell]}(z)$ is not contained in $SB$. If $B$ does not contain simple closed geodesics, Theorem~\ref{t:scg1} implies $\trap_\pm(SB)=\varnothing$, and therefore $\GG'$ is a complete system of closed geodesics with empty limit subcollection $\KK=\varnothing$. Assume now that $B$ contains at least a closed geodesic. By Theorem~\ref{t:scg2}, $B$ contains a finite collection of pairwise disjoint simple closed geodesics $\GG''$ such that:
\begin{itemize}

\item No complete geodesic is entirely contained in \[U:=B\setminus\bigcup_{\zeta\in\GG''}\zeta.\] 

\item If we denote by $\KK$ the subcollection of the waists in $\GG''$, and we set
\begin{align*}
 K:=\bigcup_{\gamma\in\KK} (\dot\gamma\cup-\dot\gamma),
\end{align*}
the trapped sets of $SU$ are given by
\begin{align*}
 \trap_+(SU)= \Ws(K)\setminus K,\qquad
 \trap_-(SU)= \Wu(K)\setminus K.
\end{align*}
\end{itemize}
Since every $\gamma\in\GG''\setminus\KK$ has conjugate points, Corollary~\ref{c:intersecting} implies that there exists $\ell>0$ such that, for every $z\in SM$ sufficiently close to $\dot\gamma\cup-\dot\gamma$, the orbit segment $\phi_{(0,\ell]}(z)$ is not contained in $SU$.  This proves that the union $\GG:=\GG'\cup\GG''$ is a complete system of  closed geodesics with limit subcollection~$\KK$.

Assume now that $M=S^2$ so that the collection $\GG'$ consists in just one simple closed geodesic $\gamma$ with conjugate points. Let $B_1$ and $B_2$ be the connected components of the complement $S^2\setminus\gamma$. Since $\gamma$ has conjugate points, each $B_i$ satisfies the convexity Assumption~\ref{a:convexity}. Corollary~\ref{c:intersecting} implies that there exists $\ell>0$ such that, for every $z\in SM$ sufficiently close to $\dot\gamma\cup-\dot\gamma$, the orbit segment $\phi_{(0,\ell]}(z)$ is not contained in $S(B_1\cup B_2)$.
If $S^2\setminus\gamma$ does not contain any simple closed geodesic, Theorem~\ref{t:scg1} implies $\trap_\pm(SB_1)\cup\trap_\pm(SB_2)=\varnothing$,  and therefore $\GG'$ is a complete system of closed geodesics with empty limit subcollection $\KK=\varnothing$. If  $S^2\setminus\gamma$ contains closed geodesics, we argue as in the previous paragraph in both $B_1$ and $B_2$, and find another collection $\GG''$ of pairwise disjoint simple closed geodesics in $B_1\cup B_2$ that, together with $\GG'$, form a complete system of closed geodesics $\GG:=\GG'\cup\GG''$ whose limit subcollection~$\KK$ is given by the waists in $\GG''$.
\end{proof}

We now have all the ingredients to prove Theorem~\ref{mt:Kupka_Smale}, which we restate for the reader's convenience.\vspace{7pt}

\noindent\textbf{Theorem \ref{mt:Kupka_Smale}.}
\emph{Let $(M,g)$ be a closed connected orientable Riemannian surface satisfying the following two conditions:}
\begin{itemize}

\item[(i)] \emph{All contractible simple closed geodesics without conjugate points are non-degenerate.}

\item[(ii)] \emph{Any pair of not necessarily distinct contractible  waists $\gamma_1,\gamma_2$ $($if it exists$)$ satisfies the transversality condition $\Wu(\dot\gamma_1)\pitchfork\Ws(\dot\gamma_2)$.}

\end{itemize}
\emph{Then the geodesic vector field of $(M,g)$ admits a Birkhoff section.}

\begin{proof}
Lemma~\ref{l:existence_complete_system} provides a complete system of closed geodesics. If its limit subcollection  is non-empty, by applying Lemma~\ref{l:reduction} a finite number of times, we end up with another complete system of closed geodesics $\GG$ with empty limit subcollection.
We denote by $\Upsilon$ the union of the Birkhoff annuli of the closed geodesics in $\GG$, i.e.
\begin{align*}
 \Upsilon:=\bigcup_{\gamma\in\GG} \big( A(\dot\gamma)\cup A(-\dot\gamma) \big).
\end{align*}
By Lemma~\ref{l:empty_limit_subcollection}, there exists $\ell>0$ such that, for each $z\in SM$, the orbit segment $\phi_{(0,\ell]}(z)$ intersects $\Upsilon$. We apply Fried surgery to resolve the self-intersection of $\Upsilon$ (if there is any), and end up with a surface of section $\Sigma\looparrowright SM$ such that, for each $z\in\Upsilon$, the orbit segment $\phi_{(-1,1)}(z)$ intersects $\Sigma$. Therefore, for each $z\in SM$, the orbit segment $\phi_{(0,\ell+2)}(z)$ intersects $\Sigma$, and we conclude that $\Sigma$ is a Birkhoff section.
\end{proof}

Finally, Theorem~\ref{mt:broken_book} is a consequence of the following statement.
\begin{Thm}
Let $(M,g)$ be a closed connected orientable surface. If $M$ has genus $G\geq1$, we consider the collection of waists $\GG'=\{\gamma_1,...,\gamma_{2G}\}$ given by Lemma~\ref{l:simple_closed_geodesics}, and the surface of section $\Sigma'\looparrowright SM$ provided by Lemma~\ref{l:Sigma}, which has genus one and $8G-4$ boundary components, all covering the closed geodesics in $\GG'$; if instead $M=S^2$, we set $\GG'=\{\gamma_0\}$, where $\gamma_0$ is any simple closed geodesic with conjugate points, and set $\Sigma':=A(\dot\gamma_0)\cup A(-\dot\gamma_0)$. 
Assume that the complement 
\[B:=M\setminus\bigcup_{\gamma\in\GG'}\gamma\] contains at least a closed geodesic and no degenerate simple closed geodesics without conjugate points. Then, there exists a finite collection $\GG''$ of simple closed geodesics that are pairwise disjoint and disjoint from the closed geodesics in $\GG'$ with the following properties. 
\begin{itemize}

\item[(i)] Every orbit of the geodesic flow intersects $\Sigma'\cup\Sigma''$, where 
\[\Sigma'':=\bigcup_{\gamma\in\GG''} \big(A(\dot\gamma)\cup A(-\dot\gamma)\big) \subset SM\setminus\Sigma'.\]

\item[(ii)] We denote by $\KK$ the subcollection of waists in $\GG'$, and 
\begin{align*}
 K:=\bigcup_{\gamma\in\KK} \big( \dot\gamma\cup-\dot\gamma \big).
\end{align*}
The trapped sets of $SM\setminus(\Sigma'\cup\Sigma'')$ are given by
\begin{align*}
\trap_+(SM\setminus(\Sigma'\cup\Sigma'')) &=\Ws(K)\setminus K,
\\
\trap_-(SM\setminus(\Sigma'\cup\Sigma'')) &=\Wu(K)\setminus K.
\end{align*}

\item[(iii)] There exists $\ell>0$ such that, for each $z\in SM$ sufficiently close to the boundary components $\partial\Sigma'\cup\partial\Sigma''\setminus K$, we have $\phi_{(0,\ell]}(z)\cap(\Sigma'\cup\Sigma'')\neq\varnothing$.

\end{itemize}
\end{Thm}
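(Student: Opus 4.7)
The plan is to paste together the surface of section $\Sigma'$ from Lemma~\ref{l:Sigma} with the disjoint union $\Sigma''$ of Birkhoff annuli of a further collection $\GG''$ of contractible simple closed geodesics, and to read off (i)--(iii) from the complete-system structure of Definition~\ref{d:complete_system} together with the escape/return properties of waists and geodesics with conjugate points. First I would apply Lemma~\ref{l:existence_complete_system} to obtain $\GG''$ so that $\GG := \GG' \cup \GG''$ is a complete system of simple closed geodesics whose limit subcollection $\KK$ consists of the waists in $\GG''$ (which is how I read the statement's ``subcollection of waists in $\GG'$'', since elements of $\GG'$ are either non-contractible waists or, in the $S^2$ case, a simple closed geodesic with conjugate points, neither of which matches the hyperbolic contractible waists referred to in Theorem~\ref{mt:broken_book}(iii)). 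Because the geodesics in $\GG''$ are pairwise disjoint and disjoint from those in $\GG'$, the Birkhoff annuli comprising $\Sigma''$ are pairwise disjoint in $SM$ and disjoint from $\Upsilon := \bigcup_{\gamma \in \GG'}(A(\dot\gamma) \cup A(-\dot\gamma))$; by performing the Fried surgery of Lemma~\ref{l:Sigma} inside a sufficiently small neighborhood of $\Upsilon$, I can further ensure $\Sigma' \cap \Sigma'' = \varnothing$.

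For (i), let $z \in SM$. By Definition~\ref{d:complete_system}(ii) (equivalently Theorem~\ref{t:scg2}(v)), no complete orbit lies in $SU$, where $U := M \setminus \bigcup_{\gamma \in \GG}\gamma$, so there exists $t_0 \in \R$ with $\phi_{t_0}(z) \in \Upsilon \cup \Sigma''$; if this intersection lies in $\Sigma''$ we are done, and otherwise Lemma~\ref{l:Sigma}(iv) shifts it to $\Sigma'$ within time $1$. For (ii), the inclusion $\Ws(K) \setminus K \subseteq \trap_+(SM \setminus (\Sigma' \cup \Sigma''))$ follows from Lemma~\ref{l:Ws_waist}: if $\omega(z) = \pm\dot\gamma$ for a waist $\gamma \in \KK$, then for $t$ large the geodesic $\pi \circ \phi_t(z)$ lies in a tubular neighborhood of $\gamma$ that avoids $\gamma$ itself, so $\phi_t(z) \notin S\gamma \supset A(\pm\dot\gamma)$; closeness of $\phi_t(z)$ to $\pm\dot\gamma$ also keeps it away from all other Birkhoff annuli and from $\Sigma'$. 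The reverse inclusion uses Lemma~\ref{l:Sigma}(iv) contrapositively: if $\phi_t(z) \notin \Sigma' \cup \Sigma''$ for all $t \geq T$, then $\phi_t(z) \notin S\gamma$ for $\gamma \in \GG''$ and $t \geq T$ directly, while $\phi_t(z) \notin \Sigma'$ forces $\phi_t(z) \notin \Upsilon$ for $t \geq T+1$, hence $\phi_t(z) \notin S\gamma$ for $\gamma \in \GG'$ as well; thus $\phi_t(z) \in SU$ eventually, and Definition~\ref{d:complete_system}(ii) forces $z \in \Ws(K) \setminus K$. The analogue for $\trap_-$ follows by time reversal.

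Property (iii) splits according to which part of $(\partial\Sigma' \cup \partial\Sigma'') \setminus K$ contains $z$. Near $\partial\Sigma'$, the required uniform return is exactly Lemma~\ref{l:Sigma}(iii). Near a boundary component $\pm\dot\gamma \subset \partial\Sigma''$ with $\gamma \in \GG'' \setminus \KK$, Theorem~\ref{t:scg2}(iii) guarantees that $\gamma$ has conjugate points, so Corollary~\ref{c:intersecting}(i) furnishes a neighborhood $V_\gamma$ of $\pm\dot\gamma$ and a time $T_\gamma > 0$ such that every orbit starting in $V_\gamma$ meets $\gamma$ within time $T_\gamma$ and hence lands in $S\gamma \subset \Sigma''$; taking $\ell$ to be the maximum of the $T_\gamma$ over $\gamma \in \GG'' \setminus \KK$ together with the constant from Lemma~\ref{l:Sigma}(iii) yields the required uniform bound. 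The main obstacle, to my mind, is the bookkeeping in (ii) that connects ``avoiding $\Sigma'$'' to ``avoiding $\Upsilon$'' via Lemma~\ref{l:Sigma}(iv) with the correct one-sided trapping, together with the verification that $\Sigma'$ can be kept disjoint from $\Sigma''$ during the surgery; once these are set up, everything else is a clean assembly of the preliminary results of Sections~\ref{s:curve_shortening} and~\ref{s:contractible_scg}.
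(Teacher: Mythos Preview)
Your proposal is correct and follows the same strategy as the paper: invoke Lemma~\ref{l:existence_complete_system} to enlarge $\GG'$ to a complete system $\GG=\GG'\cup\GG''$ with limit subcollection $\KK\subset\GG''$, keep $\Sigma'\cap\Sigma''=\varnothing$ by performing the Fried surgery of Lemma~\ref{l:Sigma} in a sufficiently small neighborhood of $\Upsilon'$, and then read off (i)--(iii) from Definition~\ref{d:complete_system}(ii,iii) together with the transfer Lemma~\ref{l:Sigma}(iv) between $\Upsilon'$ and $\Sigma'$. The one small slip is in your treatment of (iii) near $\partial\Sigma'$ when $M=S^2$: Lemma~\ref{l:Sigma}(iii) is only stated for genus $G\geq1$, so in the sphere case you should instead invoke Corollary~\ref{c:intersecting}(i) for $\gamma_0$ (which has conjugate points), exactly as you already do for the components of $\partial\Sigma''\setminus K$; alternatively, just cite Definition~\ref{d:complete_system}(iii) directly, which is how the paper handles all of $\partial\Sigma'\cup\partial\Sigma''\setminus K$ uniformly.
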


\begin{proof}
Lemma~\ref{l:existence_complete_system} provides a (possibly empty) finite collection $\GG''$ of contractible simple closed geodesics that are pairwise disjoint and disjoint from all the closed geodesics in $\GG'$, such that $\GG:=\GG'\cup\GG''$ is a complete system of closed geodesics whose limit subcollection $\KK$ is given by the waists in $\GG''$. We set
\begin{align*}
 \Upsilon':=\bigcup_{\gamma\in\GG'} \big( A(\dot\gamma)\cup A(-\dot\gamma) \big),
 \qquad
 \Sigma'':=\bigcup_{\gamma\in\GG''} \big( A(\dot\gamma)\cup A(-\dot\gamma) \big). 
\end{align*}
If $M=S^2$, we have set $\Sigma':=\Upsilon'$. If instead $M\neq S^2$, we require the surface of section $\Sigma'$ provided by Lemma~\ref{l:simple_closed_geodesics} to be contained in a sufficiently small neighborhood of $\Upsilon'$ so that $\Sigma'\cap\Sigma''=\varnothing$, and by Lemma~\ref{l:Sigma}(iv) we have
\begin{align*}
 \phi_{(-1,1)}(z)\cap\Sigma'\neq\varnothing,
 \ \
 \forall z\in\Upsilon',
 \qquad
 \phi_{(-1,1)}(z')\cap\Upsilon'\neq\varnothing,
 \ \
 \forall z'\in\Sigma'.
\end{align*}
We set
\begin{align*}
 K:=\bigcup_{\gamma\in\KK} (\dot\gamma\cup-\dot\gamma),
 \qquad
 R:=\!\!\!\!\!\bigcup_{\gamma\in\GG'\cup\GG''\setminus\KK}(\dot\gamma\cup-\dot\gamma)=\partial\Sigma'\cup\partial\Sigma''\setminus K.
\end{align*}
By properties~(ii) in Definition~\ref{d:complete_system}, 
every complete orbit $\phi_{(-\infty,\infty)}(z)$ intersects $\Sigma'\cup\Sigma''$, and we have
\begin{align*}
 \trap_+(SM\setminus(\Sigma'\cup\Sigma''))&=\trap_+(SM\setminus(\Upsilon'\cup\Sigma''))=\Ws(K)\setminus K,
 \\
 \trap_-(SM\setminus(\Sigma'\cup\Sigma''))&= \trap_-(SM\setminus(\Upsilon'\cup\Sigma''))=\Wu(K)\setminus K.
\end{align*}
Moreover, by properties~(iii) in Definition~\ref{d:complete_system}, there exist $\ell>0$ and an open neighborhood $N\subset SM$ of $R$ such that
\begin{align*}
\phi_{(0,\ell]}(z)\cap(\Upsilon'\cup\Sigma'')\neq\varnothing,
\qquad
\forall z\in N.
\end{align*}
If we take a sufficiently small open neighborhood $N'\subset N$ of $R$, we also have
\[
 \phi_{(0,\ell+1]}(z)\cap(\Sigma'\cup\Sigma'')\neq\varnothing,
\qquad
\forall z\in N'.
\qedhere
\]
\end{proof}

\bibliography{biblio}

\end{document}